\documentclass[12pt,reqno]{amsart}
\usepackage{amscd,amsmath,amsthm,amssymb}
\usepackage[left]{lineno}
\usepackage{color}
\usepackage{stmaryrd}
\usepackage[utf8]{inputenc}
\usepackage{cleveref}
\usepackage{epstopdf}
\usepackage{graphicx}
\usepackage{xcolor}
\usepackage{subeqnarray}

\usepackage{cases}
\usepackage{subcaption}

\usepackage{tikz}
\usetikzlibrary{shapes,positioning}
\usepackage{graphicx}

\definecolor{verylight}{gray}{0.97}
\definecolor{light}{gray}{0.9}
\definecolor{medium}{gray}{0.85}
\definecolor{dark}{gray}{0.6}

\def\NZQ{\mathbb}               % the font for N,Z,Q,R,C

\def\KK{{\NZQ K}}

\def\KK{{\NZQ K}}

\def\G{{\mathcal G}}

\def\0b{{\mathbf 0}}
\def\alphab{{\mathbf \alphab}}
\def\c_ib{{\mathbf c_i}}

\def\reg{{\mathbf reg}}
\def\height{\operatorname{ht}}
\def\depth{\operatorname{depth}}
\def\opn#1#2{\def#1{\operatorname{#2}}} % to make operators
\opn\chara{char} \opn\length{\ell} \opn\pd{pd} \opn\rk{rk}
\opn\projdim{proj\,dim} \opn\injdim{inj\,dim} \opn\rank{rank}
\opn\depth{depth} \opn\grade{grade} \opn\height{height}
\opn\embdim{emb\,dim} \opn\codim{codim}

\opn\Tr{Tr} \opn\bigrank{big\,rank}
\opn\superheight{superheight}\opn\lcm{lcm}
\opn\trdeg{tr\,deg}%\emph{
	\opn\reg{reg} \opn\lreg{lreg} \opn\ini{in} \opn\lpd{lpd}
	\opn\size{size} \opn\sdepth{sdepth}
	\opn\link{link}\opn\fdepth{fdepth}\opn\lex{lex}
	\opn\tr{tr}
	\opn\type{type}
	\opn\gap{gap}
	\opn\arithdeg{arith-deg}
	\opn\HS{HS}
	\opn\GL{GL}
	\opn\div{div} \opn\Div{Div} \opn\cl{cl} \opn\Cl{Cl}
	\opn\Spec{Spec} \opn\Supp{Supp} \opn\supp{supp} \opn\Sing{Sing}
	\opn\Ass{Ass} \opn\Min{Min}\opn\Mon{Mon}
	\opn\Ann{Ann} \opn\Rad{Rad} \opn\Soc{Soc}\opn\Deg{Deg}
	\opn\Im{Im} \opn\Ker{Ker} \opn\Coker{Coker} \opn\Am{Am}
	\opn\Hom{Hom} \opn\Tor{Tor} \opn\Ext{Ext} \opn\End{End}
	\opn\Aut{Aut} \opn\id{id}
	
	\opn\nat{nat}
	\opn\pff{pf}%   \pf exists already
	\opn\Pf{Pf} \opn\GL{GL} \opn\SL{SL} \opn\mod{mod} \opn\ord{ord}
	\opn\Gin{Gin} \opn\Hilb{Hilb}\opn\sort{sort}
	\opn\PF{PF}\opn\Ap{Ap}
	\opn\mult{mult}
	\opn\bight{bight}
	\opn\Facets{Facets}
	\opn\aff{aff}
	\opn\relint{relint} \opn\st{st}
	\opn\lk{lk} \opn\cn{cn} \opn\core{core} \opn\vol{vol}  \opn\inp{inp} \opn\nilpot{nilpot}
	\opn\link{link} \opn\star{star}\opn\lex{lex}\opn\set{set}
	\opn\width{wd}
	\opn\Fr{F}
	\opn\QF{QF}
	\opn\G{G}
	\opn\type{type}\opn\res{res}
	\opn\conv{conv}
	\opn\Ind{Ind}
	\opn\gr{gr}
	
	\def\pot#1#2{#1[\kern-0.28ex[#2]\kern-0.28ex]}

	\opn\dirlim{\underrightarrow{\lim}}
	\opn\inivlim{\underleftarrow{\lim}}
	\let\to=\rightarrow
	
	\def\Implies{\ifmmode\Longrightarrow \else
		\unskip${}\Longrightarrow{}$\ignorespaces\fi}
	\def\implies{\ifmmode\Rightarrow \else
		\unskip${}\Rightarrow{}$\ignorespaces\fi}
	\def\iff{\ifmmode\Longleftrightarrow \else
		\unskip${}\Longleftrightarrow{}$\ignorespaces\fi}

	\let\:=\colon
	\newtheorem{Theorem}{Theorem}[section]
	\newtheorem{Lemma}[Theorem]{Lemma}

	\newtheorem{Example}[Theorem]{Example}
	
	\newtheorem{Definition}[Theorem]{Definition}

	\let\epsilon\varepsilon
	\let\kappa=\varkappa
	\opn\dis{dis}
	\def\pnt{{\raise0.5mm\hbox{\large\bf.}}}
	
	\opn\Lex{Lex}
	\opn\vstab{vstab}
	
	\newcommand*{\circled}[1]{\lower.7ex\hbox{\tikz\draw (0pt, 0pt)%
			circle (.5em) node {\makebox[1em][c]{\small #1}};}}

\begin{document}

		%\linenumbers
		\title {The  $\mathrm{v}$-number of powers of ideals associated with chessboard complexes}
		
\author {Wenjie Huo, Guangjun Zhu$^{\ast}$ and Jiaxin Li}
\address{ School of Mathematical Sciences, Soochow University, Suzhou 215006, P. R. China}
	
	\email{wjhuo021030@stu.suda.edu.cn(Wenjie  Huo), zhuguangjun@suda.edu.cn\\
		(Corresponding author:Guangjun Zhu),
	lijiaxinworking@163.com(Jiaxin Li).}

		\thanks{$^{\ast}$ Corresponding author}
		
		\thanks{2020 {\em Mathematics Subject Classification}.
			Primary 13F20,  13F55; Secondary 05E40}

		\thanks{Keywords:  $\mathrm{v}$-number, symbolic power,  ordinary	 power, facet ideal, Stanley--Reisner ideal, chessboard complex}

		% \subjclass[2010]{Primary 13C99; Secondary 13E15, 13A15.}
		%		13H10   	Special types (Cohen-Macaulay, Gorenstein, Buchsbaum, etc.)
		%		13D02   	Syzygies, resolutions, complexes
		%		05E40   	Combinatorial aspects of commutative algebra
		%		16S36   	Ordinary and skew polynomial rings and semigroup rings
		
		%		14M25   	Toric varieties, Newton polyhedra [See also 52B20]
		%		13A02   	Graded rings
		%		13F20   	Polynomial rings and ideals; rings of integer-valued polynomials
		%		13A18   	Valuations and their generalizations
		%		06A11   	Algebraic aspects of posets
		%       05C38       Paths and cycles
		%       13A15       Ideals; multiplicative ideal theory
		%       13F20       Polynomial rings and ideals; rings of integer-valued polynomials
		%   13F20 Polynomial rings and ideals; rings of integer-valued polynomials [See also 11C08, 13B25]
		
		%\keywords{   }

\maketitle

		\begin{abstract}
		We study the $\mathrm{v}$-numbers of ordinary and symbolic powers of the facet ideal and the Stanley--Reisner ideal of a chessboard complex. Exact formulas are established for the $\mathrm{v}$-numbers of ordinary and symbolic powers of the Stanley--Reisner ideal, as well as symbolic powers of the facet ideal. Furthermore, we show that the $\mathrm{v}$-number function of the ordinary powers is asymptotically linear.
		\end{abstract}
		\setcounter{tocdepth}{1}
		%\tableofcontents

		\section{Introduction}
		
		Let $S=\KK[x_1,\dots, x_n]=\bigoplus\limits_{d\ge 0} S_d$ be the standard graded polynomial ring  over a field $\KK$.
	For a graded ideal $I \subseteq S$, let $\operatorname{Ass}(I)$ denote the set of all associated prime ideals of $I$. 
	 For  each $\frak{p}\in \operatorname{Ass}(I)$,
		the \emph{local $\mathrm{v}$-number} of $I$ at $\frak{p}$ 
		is defined as
		\[
	\mathrm{v}_{\mathfrak{p}}(I)= \min\{d\ge 0 \mid \text{\ there exists \ } f\in S_d \text{\ such that\ } (I : f)=\mathfrak{p}\}.
		\]
		The \emph{$\mathrm{v}$-number} 
		of $I$, denoted by $\mathrm{v}(I)$, is then defined as
		\[
		\mathrm{v}(I)= \min\{\mathrm{v}_{\mathfrak{p}}(I)\mid \mathfrak{p}\in \operatorname{Ass}(I)\}.
		\]
		The $\mathrm{v}$-number was introduced in \cite{CSTVV} to analyze  the asymptotic behavior of the minimum distance of projective Reed-Muller type codes. This invariant  has been extensively studied from both algebraic and combinatorial perspectives. Much of this research focuses on monomial ideals (see, e.g., \cite{BM,  F, KMT, SS}). In particular, Jaramillo and Villarreal \cite{JV} described $\mathrm{v}(I)$ combinatorially for edge ideals of clutters, while Kataoka et al. \cite{KMT} studied the $\mathrm{v}$-number of Stanley--Reisner ideals via Alexander duality, proving that $\mathrm{v}(I_\Delta)$ equals the dimension of $S/I_\Delta$ whenever $\Delta$ is a $2$-pure simplicial complex.
		
		Regarding the asymptotic behavior of $\mathrm{v}$-numbers, Ficarra and Sgroi \cite{FS} and Conca \cite{Conca} independently established that $\mathrm{v}(I^t)$ is eventually linear in $t$ for any graded ideal $I$ in a polynomial ring (and more generally, in an $\mathbb{N}$-graded Noetherian domain \cite{Conca}).  We refer the reader to \cite{BMS, F, FS, FG,KNS,VS} for further developments in this direction.	In particular,  for any Noetherian graded filtration $\mathcal{I} = \{I_{t}\}$ of an $\mathbb{N}$-graded Noetherian domain $R$, it has been proved that $\mathrm{v}(I_{t})$ is eventually a quasi-linear function in $t$, i.e., $\mathrm{v}(I_{t})$ is a periodically linear function in $t$ for $t \gg 0$.
		
		Comparing ordinary and symbolic powers $I^t$ and $I^{(t)}$ is a central theme in the study of graded ideals. For squarefree monomial ideals $I$, Chau et al. \cite{CHJV} recently investigated the relation between the $\mathrm{v}$-numbers of these two filtrations, proving that
		\[
		\mathrm{v}(I^t) \le \mathrm{v}(I^{(t)}) + (t-1)(d-1) \quad \text{for all } t \ge 1,
		\]
		where $d$ denotes the maximum degree of a minimal generator of $I$. They further showed that for any fixed $t \ge 2$, the difference $\mathrm{v}(I^{(t)}) - \mathrm{v}(I^t)$ can be arbitrarily large. More recently, Kumar et al. \cite{KMV} studied the $\mathrm{v}$-numbers of two associated discrepancy modules, $I^{(t)}/I^t$ and $I^{(t)}/I^{(t+1)}$.
	
		For the edge ideal   of a simple graph, the asymptotic behavior of its $\mathrm{v}$-number is completely determined by the results of Biswas  et al.   \cite{BMS} and Ficarra  et al. in  \cite{FM}. Moreover, Kumar et al. \cite{KNS} proved that
	\[
	\lim_{t \to \infty} \frac{\mathrm{v}(I^{(t)})}{t} = \widehat{\alpha}(I),
	\]
	where $\widehat{\alpha}(I)$ denotes the Waldschmidt constant of $I$.
	
	Chessboard complexes, which are the matching complexes $\mathcal{M}(K_{m,n})$	of the complete bipartite graphs $K_{m,n}$,  were first introduced in the thesis of Garst \cite{Garst}. He showed that $\mathcal{M}(K_{m,n})$ is Cohen--Macaulay if and only if $n\geq 2m-1$.
	We refer the reader to \cite{BLVZ,FH,Jojic ,Zi}  for further developments in this direction.
	The chessboard complex $\Delta_{m,n}$ is formed by all admissible rook configurations on an $m \times n$ chessboard, where an admissible configuration is a subset of squares with no two in the same row or column.
	
	In this paper, we focus on the $\mathrm{v}$-numbers of ordinary and symbolic powers of the  facet ideal and the Stanley--Reisner ideal of   a chessboard complex. 	Our main results are as follows:
	
	\begin{Theorem}
		Let	$n \geq m\ge 1$ be   integers.	Let $\Delta_{m,n}$ be a chessboard complex and $I_{\Delta_{m,n}}$ its Stanley-Reisner ideal. 
		Then
		\[
		\mathrm{v}(I_{\Delta_{m,n}})=
		\begin{cases}
			m-1, &\text{if $m=n$},\\
			m, &\text{if $n>m$}.
		\end{cases}
		\]
	\end{Theorem}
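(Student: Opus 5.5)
The plan is to reduce the computation to a purely combinatorial question about faces of $\Delta_{m,n}$ and then count rook placements. I will use the standard description of the $\mathrm{v}$-number of a squarefree monomial ideal (Jaramillo--Villarreal, and its Stanley--Reisner reformulation by Saha--Sengupta and Kataoka et al.). For such an ideal the minimum defining $\mathrm{v}(I)$ is attained at a squarefree monomial $f=x_A$, where $A$ is a face of $\Delta$. The associated primes of $I_\Delta$ are $\mathfrak p_F=(x_v : v\notin F)$ for the facets $F$ of $\Delta$. Moreover $(I_\Delta : x_A)=I_{\mathrm{lk}_\Delta(A)}+(x_v : v\notin \mathrm{st}_\Delta A)$, and this equals some $\mathfrak p_F$ exactly when $\mathrm{lk}_\Delta(A)$ is a simplex, i.e.\ when $A$ lies in a unique facet $F$. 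Hence
\[
\mathrm{v}(I_\Delta)=\min\{|A| : A\in\Delta \text{ is contained in exactly one facet of } \Delta\}.
\]
If needed I will briefly rederive this from the definition: reduce to a monomial $f$ by a term-order/multigrading argument, then to squarefree $f$ using that $(I:x^{a})$ depends only on the support when $I$ is squarefree and $x^a\notin I$.

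Next I make the combinatorics of $\Delta_{m,n}$ explicit. Take vertices $(i,j)\in[m]\times[n]$. Faces are partial matchings, i.e.\ admissible rook placements. Since $n\ge m$, every face extends to a placement using all $m$ rows, so the facets are exactly the injections $[m]\to[n]$, all of size $m$.

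Let $A$ be a face with $k$ rooks. Its facets are obtained by placing $m-k$ rooks in the free rows using the $n-k$ free columns. The number of such facets is therefore
\[
\frac{(n-k)!}{(n-m)!}.
\]
This equals $1$ if and only if either $k=m$, or $m-k=1$ and $n-k=1$, that is $n=m$ and $k=m-1$. (When $m-k\ge1$ and $n-k\ge2$ there are at least two choices for the next rook.)

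Minimizing $k$ gives the answer. If $m=n$, any matching of size $m-1$ lies in a unique facet, and no smaller face does, so $\mathrm{v}(I_{\Delta_{m,n}})=m-1$. If $n>m$, only the facets themselves lie in a unique facet, so the value is $m$.

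The degenerate cases need a quick check. When $m=n=1$, $\Delta$ is a single vertex, $I_\Delta=0$, and the convention $\mathrm{v}=0=m-1$ should be checked against the paper's conventions. When $m=1<n$, $\Delta$ is $n$ points and $\mathrm{v}=1$, consistent with the formula.

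The main obstacle is the reduction step: justifying that $\mathrm{v}(I_\Delta)$ is computed by squarefree monomials and identifying $(I_\Delta:x_A)$ with a prime exactly when $A$ lies in a unique facet. If that characterization is available from the literature, as I expect, the rest is the counting above.
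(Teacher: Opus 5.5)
Your proof is correct, and it reaches the result by a different route from the paper.

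\textbf{Your route.} You invoke the general description $\mathrm{v}(I_\Delta)=\min\{|A| : A\in\Delta \text{ lies in exactly one facet}\}$. This rests on $(I_\Delta:x_A)=I_{\mathrm{st}_\Delta(A)}$ and on the reduction to squarefree monomial witnesses. A single count of the $(n-k)!/(n-m)!$ completions of a $k$-rook placement then gives both bounds at once. It also covers $m=1$ uniformly: for $m=n=1$ the empty face lies in the unique facet, and $(I:1)=(0)\in\operatorname{Ass}(0)$. The paper instead handles $m=1$ separately, citing results on the zero ideal and on edge ideals of complete graphs.

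\textbf{The paper's route.} It works directly from the definition. The upper bound comes from the explicit witness $\prod x_{i,i}$ over $m-1$ diagonal squares (if $m=n$) or $m$ diagonal squares (if $n>m$), whose colon is $(x_{i,j}\mid i\neq j)$. The lower bound is a primality contradiction. If $\deg f$ is too small, free rows and columns supply two variables $x_{i,r},x_{j,r}$ (or $x_{\alpha,\beta},x_{\alpha,\gamma}$) outside $(I:f)$ whose product lies in $I$. This is exactly the ``at least two completions'' half of your count, phrased without star/link machinery.

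\textbf{Comparison.} Your version is shorter and explains conceptually why $m=n$ is special: on a square board, $m-1$ rooks force the last one. The paper's version is self-contained. More to the point, its free-row/free-column mechanism carries over essentially verbatim to ordinary powers. It is the engine of Lemma~\ref{lowbound1}, where colon ideals are no longer governed by faces of $\Delta$.

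\textbf{The step you flagged.} The reduction you call the main obstacle is short. Suppose $(I:f)=\mathfrak p$ with $f$ homogeneous; then every term $u$ of $f$ satisfies $\mathfrak p\subseteq(I:u)$. If all these inclusions were strict, a product of monomials $w_u\in(I:u)\setminus\mathfrak p$ would lie in $(I:f)\setminus\mathfrak p$, a contradiction. Passing from a monomial $x^a\notin I$ to $x_{\operatorname{supp}(x^a)}$ is immediate for squarefree $I$. The paper uses this reduction tacitly as well.
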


	\begin{Theorem}
		Let  $n \geq m\ge 1$ and $t\ge 2$ be  integers. Let $\Delta_{m,n}$ be a chessboard complex and $I_{\Delta_{m,n}}$ its Stanley-Reisner ideal. Then 
		\[
		\mathrm{v}(I_{\Delta_{m,n}}^t)=	\begin{cases}
			0, &\text{if $m=n=1$},\\
			m, &\text{if $n \geq m \geq 4$ and $2\le t\le \frac{m}{2}$},\\
			2t-1, &\text{if $n\ge2$ and $m=1$},\\
			2t-1, &\text{if $n \geq m\ge 2$  and $ t\ge \frac{m+1}{2}$}.
		\end{cases}
		\]
	\end{Theorem}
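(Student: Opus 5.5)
The plan is to translate everything into the language of edge ideals. The minimal non-faces of $\Delta_{m,n}$ are exactly the pairs of squares lying in a common row or a common column. Hence $I_{\Delta_{m,n}}=I(G)$ is the edge ideal of the rook graph $G=K_m\,\square\,K_n$ on the vertices $x_{ij}$, which is the line graph of $K_{m,n}$.

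Two standard reductions for monomial ideals will be used throughout:
\begin{itemize}
\item $\mathrm{v}(I^t)$ is attained by a monomial $f$ with $(I^t:f)=\mathfrak p\in\Ass(I^t)$.
\item For edge ideals, the colon ideals $(I^t:f)$ with $f=e_1\cdots e_{t-1}g$, where the $e_i$ are edges, can be computed from $I$ and the even-connections in $G$ (Banerjee's description).
\end{itemize}

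The degenerate cases come first. If $m=n=1$, then $I=0$, so the value is $0$ by convention. If $m=1$ and $n\ge 2$, then $G=K_n$, and $I$ is the squarefree Veronese ideal of degree $2$. Taking $f=x_{11}^{2t-2}\cdot x_{11}$ gives $(I^t:x_{11}^{2t-1})=(x_{12},\dots,x_{1n})$, which is prime. This yields the upper bound $2t-1$.

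Next comes the general lower bound $\mathrm{v}(I^t)\ge 2t-1$. Suppose $(I^t:f)=\mathfrak p$ with $f$ a monomial. Then $\mathfrak p\neq S$ forces $f\notin I^t$, while $x f\in I^t$ for any variable $x\in\mathfrak p$. Since $I^t$ is generated in degree $2t$, this gives $\deg f\ge 2t-1$. This settles the lower bound in the last three cases.

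For the matching upper bound when $t\ge (m+1)/2$, I take $f=e^{t-1}g$, where $e$ is an edge inside a single column, $g$ has small degree, and $\deg f=2t-1$. The aim is $(I^t:f)=(I:g')$ for a suitable monomial $g'$ of degree $1$, with $(I:g')$ prime. Since $G\setminus N[x_{ij}]$ is a rook graph on $(m-1)\times(n-1)$ squares, a single vertex never suffices once $m\ge 3$. So the power $e^{t-1}$ must itself absorb the remaining $(m-1)\times(n-1)$ board. The plan is to spread $f$ over a chain of squares $x_{11},x_{21},x_{22},x_{32},\dots$ (a staircase). Even-connections through this chain then make every edge of the leftover board lie in the colon. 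This uses $2t-1\ge m$, i.e.\ roughly one square per row.

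For $n\ge m\ge 4$ and $2\le t\le m/2$, there are two directions. For the upper bound, I take a monomial of degree $m$: a product of $t-1$ disjoint edges covering $2t-2\le m-2$ rows, times a witness for $\mathrm{v}(I)$ on the remaining rows, chosen as in the proof of Theorem 1.1 (1) for $n>m$. I then check via even-connections that the colon is the complement of a facet. For the lower bound $\mathrm{v}(I^t)\ge m$, let $(I^t:f)=\mathfrak p$ with $\mathfrak p$ the complement of a facet $F$ of $\Delta_{m,n}$, or more generally an embedded prime. I would show that $f$ must be divisible by a product of $t-1$ edges. I would then show that the "residual" part, together with the support of the edges, must meet every row of the board. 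Otherwise some row would contribute a generator of $I$ that is not in $\mathfrak p$, or the colon would fail to be prime. Since the rows are pairwise disjoint, this forces $\deg f\ge m$.

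The main obstacle is this last lower bound $\mathrm{v}(I^t)\ge m$ for small $t$, together with the exact colon computation for the staircase monomial. Both require controlling the even-connected pairs in the rook graph, and neither follows from a pure degree count. I would first try to prove a lemma: if $(I^t:f)$ is prime, then every row of the board (and, symmetrically when $n=m$, every column) contains a variable dividing $f$ or a neighbour of such a variable that is reached through an even-connection.
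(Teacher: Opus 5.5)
Your bound $\mathrm{v}(I^t)\ge 2t-1$ and the case $m=n=1$ are fine. However, none of the upper bounds is actually established, the constructions you sketch fail, and the lower bound $\mathrm{v}(I^t)\ge m$ is left open.

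The obstruction that decides everything (it drives Lemma~\ref{lowbound1}) is this. A square $x_{\alpha,\beta}$ that shares neither its row nor its column with $\supp(f)$ never lies in $(I^t:f)$: in a factorization $x_{\alpha,\beta}f=hu_1\cdots u_t$ it cannot occur in any edge $u_k$, so $x_{\alpha,\beta}\mid h$ and $f\in I^t$. Hence, if $(I^t:f)$ is prime, $\supp(f)$ cannot leave a row with two untouched columns.

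A staircase with $2t-1$ squares touches only $t$ rows and $t$ columns, since it uses two squares per row, not one. So it fails as soon as $t<m$ and $n\ge t+2$. For example, with $m=3$, $n=4$, $t=2$ and $f=x_{1,1}x_{2,1}x_{2,2}$, you get $x_{3,3}x_{3,4}\in I\subseteq(I^2:f)$ but $x_{3,3},x_{3,4}\notin(I^2:f)$. Likewise $(x_{1,1}x_{2,1})^{t-1}y$ fails for every $m\ge 4$. Putting the edges of the leftover board into the colon is not enough; you need its vertices.

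The missing idea is to realize ``one square per row'' inside a single column, which is a clique of the rook graph, and to aim for the maximal ideal $\mathfrak{m}$ rather than a minimal prime. For odd $m$ take $f=(x_{1,1}x_{2,1})^{t-\frac{m+1}{2}}\prod_{i=1}^{m}x_{i,1}$. For even $m\ge 4$ take $f=x_{m,1}^2(x_{1,1}x_{2,1})^{t-\frac{m}{2}-1}\prod_{i=1}^{m-1}x_{i,1}$. Then $\deg f=2t-1$, so $f\notin I^t$. For every square $x_{r,s}$, the product $x_{r,s}f$ splits into $t$ edges: pair $x_{r,s}$ with $x_{r,1}$ if $s\neq 1$, then match the remaining column-$1$ squares among themselves. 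Hence $(I^t:f)=\mathfrak{m}$. For $m=2$, use $x_{2,2}(x_{1,1}x_{2,1})^{t-1}$ together with Lemma~\ref{colon}.

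Your $m=1$ witness is also wrong. Since $x_{1,1}^2$ is not an edge, $(I^t:x_{1,1}^{2t-1})=(x_{1,2},\dots,x_{1,n})^t$, which is not prime for $t\ge 2$. Use $(x_{1,1}x_{1,2})^{t-1}x_{1,1}$ instead.

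For $2\le t\le\frac m2$, your target for the upper bound, a colon equal to the complement $\mathfrak{q}_F$ of a facet $F=\{x_{1,q_1},\dots,x_{m,q_m}\}$, is impossible when $n>m$:
\begin{enumerate}
\item Localizing at $\mathfrak{q}_F$, where $I$ becomes $\mathfrak{q}_F$, gives $\deg_{\mathfrak{q}_F}(f)=t-1$.
\item Take a column $b\notin\{q_1,\dots,q_m\}$ and any row $a$. Every edge in a factorization of $x_{a,b}f\in I^t$ then contains exactly one variable of $\mathfrak{q}_F$.
\item So the edge through $x_{a,b}$ must be $x_{a,b}x_{a,q_a}$. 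This forces $x_{a,q_a}\mid f$ for every $a$, hence $\deg f\ge m+t-1>m$.
\end{enumerate}
The natural instance of your recipe also fails. For $m=4$, $t=2$, $f=x_{1,1}x_{2,1}x_{3,3}x_{4,4}$, you get $x_{1,2}x_{2,2}\in(I^2:f)$ while $x_{1,2},x_{2,2}\notin(I^2:f)$. The paper's witness is $f=\prod_{i=1}^{2t-1}x_{i,1}\prod_{j=2t}^{m}x_{j,j-2t+2}$. Its odd square $x_{2t-1,1}$ sits in the same column as the $t-1$ edges, so every square in rows $1,\dots,2t-1$ can be absorbed. The colon is the \emph{embedded} prime generated by all variables except $x_{k,k-2t+2}$, $k\in[2t,m]$.

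For the lower bound $\mathrm{v}(I^t)\ge m$, the lemma you propose (each row contains a divisor of $f$ or an even-connected neighbour) yields no degree count. The argument that works is short:
\begin{enumerate}
\item $(I^t:f)=\mathfrak{p}$ forces $f\in I^{t-1}$, so $\supp(f)$ contains two squares in a common row or column.
\item If $\deg f\le m-1$, this leaves two untouched rows $\alpha_1,\alpha_2$ and an untouched column $\beta$, or one untouched row and two untouched columns.
\item By the obstruction above, $x_{\alpha_1,\beta},x_{\alpha_2,\beta}\notin\mathfrak{p}$, yet $x_{\alpha_1,\beta}x_{\alpha_2,\beta}\in I\subseteq\mathfrak{p}$. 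This contradicts primality of $\mathfrak{p}$.
\end{enumerate}
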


		\begin{Theorem}
			Let  $n \ge m \ge 1$ and $t \ge 2$ be  integers. Let $\Delta_{m,n}$ be a chessboard complex  and $I_{\Delta_{m,n}}$ its Stanley-Reisner ideal. Then 
			\[
			\mathrm{v}(I_{\Delta_{m,n}}^{(t)})=	\begin{cases}
				0, &\text{if $m=n=1$,}\\
				t +\left\lceil \frac{t-1}{n-1} \right\rceil,  &\text{if $n\ge 2$ and $m=1$},\\
				t-1+\max\left\{ d_{m,n}, \left\lceil \frac{t - 1 + c_{m,n}}{n - 1} \right\rceil  \right\}& \text{if $n\ge m\ge 2$}.
			\end{cases}
			\]
			where  $c_{m,n}=m(n-m)+\binom{m}{2}$ and 
			$d_{m,n}=
			\begin{cases}
				m-1, &\text{if $m=n$,}\\
				m, &\text{if $n>m$.}
			\end{cases}$
		\end{Theorem}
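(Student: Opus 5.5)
The plan is to reduce the computation to a purely combinatorial optimization over monomials. Write the variables as $x_{ij}$ with $i\in[m]$, $j\in[n]$. The minimal non-faces of $\Delta_{m,n}$ are pairs of squares in a common row or column. Hence $I_{\Delta_{m,n}}$ is the edge ideal of the rook graph, and
\[
I_{\Delta_{m,n}}^{(t)}=\bigcap_{F}P_F^{\,t}.
\]
Here $F$ runs over the facets, which are the rook placements of size $m$, and $P_F$ is generated by the variables not in $F$. The case $m=n=1$ is trivial, since there $I_{\Delta_{1,1}}=0$. Since $I^{(t)}$ is monomial, we may take $f$ to be a monomial. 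For a facet $G$, let $a_G(f)$ denote the degree of $f$ in the variables of $P_G$. Then $(P_G^t:f)=P_G^{\max(t-a_G(f),0)}$.

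Distinct minimal primes are incomparable, so $P_F\not\subseteq P_G^{k}$ for $G\neq F$ and $k\ge 1$. Consequently $(I^{(t)}:f)=P_F$ holds exactly when $a_F(f)=t-1$ and $a_G(f)\ge t$ for every facet $G\ne F$. Write $f=g\,h$, where $g$ is supported off $F$ with $\deg g=t-1$, and $h$ is supported on $F$ with $\deg h=b$. Then $\deg f=t-1+b$, and the theorem amounts to showing that the minimal admissible $b$ equals $\max\{d_{m,n},\lceil (t-1+c_{m,n})/(n-1)\rceil\}$. The formula for $m=1$ is the specialization of this with $c_{1,n}=n-1$ and $d_{1,n}=1$, so the cases need not be treated separately. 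By symmetry the answer does not depend on $F$.

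\emph{Lower bound.} I would use the $c_{m,n}$ \emph{neighbours} $G$ of $F$ in the facet graph. These come in two kinds: move one rook to an empty column in its row ($m(n-m)$ choices), or swap the columns of two rooks ($\binom m2$ choices). Summing $a_G(f)\ge t$ over all neighbours requires a count for each variable.
\begin{itemize}
\item A variable of $h$ at a square of $F$ leaves $G$ for $(n-m)+(m-1)=n-1$ of the neighbours.
\item A variable of $g$ at a square $p\notin F$ lies in exactly one neighbour $G$: either the move into the empty column of $p$, or the swap making $p$ occupied. So it contributes to $c_{m,n}-1$ of the sums.
\end{itemize}
This gives
\[
b(n-1)+(t-1)(c_{m,n}-1)\ \ge\ t\,c_{m,n},
\]
that is, $b\ge\lceil (t-1+c_{m,n})/(n-1)\rceil$. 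For the bound $b\ge d_{m,n}$, I would argue as in the proof of Theorem 1 (that is, the computation of $\mathrm{v}(I_{\Delta_{m,n}})$). The point is that $g$ has degree only $t-1$, so it cannot push $a_G$ up to $t$ for all facets $G$ at once. Take facets $G$ that contain the support of $g$ as far as possible but differ from $F$; each such $G$ must get at least one unit from $h$ on $F\setminus G$. The Theorem 1 argument on such a family then forces $b\ge d_{m,n}$.

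\emph{Upper bound.} This is the step I expect to be the main obstacle. Let $b$ be the claimed maximum. I would take $h=\prod_i x_{i\sigma(i)}^{e_i}$, where $F=\{(i,\sigma(i))\}$ and the exponents $e_i$ are as balanced as possible with $\sum e_i=b$. I would spread $g$ evenly over squares off $F$, cycling through the empty-column squares and the swap squares. The aim is that each neighbour loses at most about $(t-1)/c_{m,n}$ of $g$ and gains enough from $h$; this balancing is what makes the averaging bound sharp. The regime where $d_{m,n}$ dominates (small $t$) would be handled by multiplying a $t=1$ witness from Theorem 1 by a suitable balanced factor.

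Finally, non-neighbour facets $G$ must be checked. They differ from $F$ in at least two squares, so they receive at least as much from $h$ as some neighbour, while containing at most as much of the support of $g$. I would make this precise by a domination argument, comparing each $G$ with a neighbour $G'$ satisfying $F\setminus G'\subseteq F\setminus G$. Arranging the exponents so that this comparison and the rounding in the balanced distribution both work, especially when $n=m$ where swaps dominate, is where the main care is needed.
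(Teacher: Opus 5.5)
Your setup and lower bound are correct and match the paper. Double counting $a_G(f)\ge t$ over the $c_{m,n}$ neighbouring facets is the same as summing the paper's swap inequalities $s_{i,q_j}+s_{j,q_i}\le s_{i,q_i}+s_{j,q_j}-1$ and move inequalities $s_{k,\ell}\le s_{k,q_k}-1$. For $b\ge d_{m,n}$ you do not need the first-power argument. The identity $a_G(f)=t-1+\deg_{F\setminus G}(h)-\deg_{G\setminus F}(g)$ shows that every $G\ne F$ forces $\deg_{F\setminus G}(h)\ge 1$. So the swaps leave at most one zero exponent in $h$, and when $n>m$ the moves leave none.

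The gap is in the upper bound. Your domination step for non-neighbouring facets is false. A facet obtained from $F$ by a cycle of length at least $3$ can contain more of $g$ than every neighbour $G'$ with $F\setminus G'\subseteq F\setminus G$, so neighbour conditions plus balancing do not control it.

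Concretely, take $m=n=3$, $t=4$ and $F$ the diagonal. Then $b=3$ and $h=x_{1,1}x_{2,2}x_{3,3}$. The swap conditions force $g(i,j)+g(j,i)\le 1$ for each pair, and since $\deg g=3$, each pair gets exactly one unit. The evenly cycled choice $g=x_{1,2}x_{2,3}x_{3,1}$ satisfies every neighbour condition. Yet for $G=\{x_{1,2},x_{2,3},x_{3,1}\}$ you get $a_G(f)=6-3=3<t$, so $(I^{(4)}:f)\ne P_F$.

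What is missing is a global structure on $g$ that controls all facets at once. The paper takes $F=\{x_{1,1},\dots,x_{m,m}\}$ and $s_{i,i}=e_i$, and imposes a triangular pattern:
\begin{itemize}
\item $s_{i,j}\le e_i$ for $i<j\le m$;
\item $s_{i,j}\le e_i-1$ for $j<i$ or $j>m$.
\end{itemize}
Any injection $i\mapsto \ell_i$ other than the identity has some $\ell_i<i$ or $\ell_i>m$. Hence $\sum_i s_{i,\ell_i}\le \sum_i e_i-1$, which is exactly $a_G(f)\ge t$, for every $G\ne F$ simultaneously. No case analysis of non-neighbours is needed.

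The room in this pattern is $\sum_i\bigl[(m-i)e_i+(i-1)(e_i-1)+(n-m)(e_i-1)\bigr]=(n-1)b-c_{m,n}$. This is at least $t-1$ exactly when $b\ge\lceil (t-1+c_{m,n})/(n-1)\rceil$. All the bounds can be made nonnegative exactly when $b\ge d_{m,n}$: this needs $e_i\ge 1$ for $i\ge 2$, and also for $i=1$ when $n>m$. The paper uses $e=(b-m+1,1,\dots,1)$, but any such $e$ works. This handles both regimes uniformly, so your separate step of multiplying a $t=1$ witness is unnecessary. In the example above, the transitive choice $g=x_{1,2}x_{1,3}x_{2,3}$ passes all six facets.
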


	\begin{Theorem}
		Let  $n \geq m\ge 1$ be  integers. Let $\Delta_{m,n}$ be a chessboard complex  and $\mathcal{F}(\Delta_{m,n})$ its 
		facet ideal. Then 
		\[
		\mathrm{v}(\mathcal{F}(\Delta_{m,n}))=	2m-2.
		\]
	\end{Theorem}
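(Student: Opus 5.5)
The plan is to use the combinatorial description of the $\mathrm{v}$-number of a squarefree monomial ideal (Jaramillo--Villarreal \cite{JV}). For a squarefree monomial ideal $I$ we have $\mathrm{v}(I)=\min\{|A|\}$, taken over sets of variables $A$ with $x_A\notin I$ and $(I:x_A)$ a prime ideal $\mathfrak p\in\Ass(I)$.

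Write $I=\mathcal{F}(\Delta_{m,n})$ in variables $x_{ij}$ indexed by the cells of the $m\times n$ board. Its minimal generators are the monomials $x_{1\sigma(1)}\cdots x_{m\sigma(m)}$ with $\sigma\colon[m]\to[n]$ injective. A set $C$ of cells is a vertex cover exactly when its complement has no matching saturating all $m$ rows. By Hall's theorem the minimal covers, and hence the associated primes, are
\[
P_{R',T}=(x_{ij}: i\in R',\ j\in T),\qquad |R'|=a,\ |T|=n-a+1,\ 1\le a\le m .
\]
For $A$ disjoint from $P=P_{R',T}$, every generator $x_e$ meets $P$ because $P$ is a cover, so $x_{e\setminus A}\in P$. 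Hence $(I:x_A)=P$ iff two conditions hold. First, no perfect rook placement lies inside $A$. Second, for every $(i,j)$ with $i\in R'$ and $j\in T$, the bipartite graph $A$ (rows versus columns) contains a matching of $[m]\setminus\{i\}$ that avoids column $j$. If $A$ met $P$, then $(I:x_A)\ne P$, since $x_A\notin I$ would force some generator of the colon to lie outside $P$; so we may assume $A\cap P=\emptyset$. The case $m=1$ is immediate: $I$ is the prime $(x_{11},\dots,x_{1n})$ and $\mathrm{v}(I)=0$. Assume $m\ge2$ from now on.

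For the upper bound, take $a=m$, $R'=[m]$, $T=\{m,\dots,n\}$ and $U=[m-1]$. Let $A$ be the "path" $A=\{(i,i),(i+1,i): 1\le i\le m-1\}$, which has $2m-2$ cells. Since $A$ uses only $m-1$ columns, $x_A\notin I$. Deleting row $i$ from the path leaves two pieces, each with a perfect matching onto $U$, namely $(k,k)$ for $k<i$ and $(k,k-1)$ for $k>i$. Adding the cell $(i,j)$ with $j\in T$ then gives a rook placement $e$ with $e\setminus A=\{(i,j)\}$. So $(I:x_A)=P_{[m],T}$ and $\mathrm{v}(I)\le 2m-2$.

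For the lower bound, fix $P=P_{R',T}$ and $A$ with $(I:x_A)=P$, and let $U'=[n]\setminus T$, so $|U'|=a-1$.

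\emph{Rows in $R'$.} These rows have all their $A$-neighbours in $U'$. For each $i\in R'$ the required matching sends $R'\setminus\{i\}$ injectively into $U'$, so the subgraph $H$ of $A$ on $R'\cup U'$ has a perfect matching after deleting any row. I would show $H$ is connected. Each component $K$ of $H$ must satisfy $\#\text{rows}(K)\le\#\text{cols}(K)+1$, and deleting a row from $K$ forces $\#\text{rows}(K)-1\le\#\text{cols}(K)$ with equality needed overall. Together with the count $a=(a-1)+1$, this leaves a single component: a column-only component would be an unmatched column, which is impossible since all $a-1$ columns are used. Hence $H$ has at least $(2a-1)-1=2a-2$ edges.

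\emph{Rows outside $R'$.} For $i\in R'$ and $j\in T$, each of the $m-a$ rows outside $R'$ must be matched into $T\setminus\{j\}$, because $U'$ is fully used by $R'\setminus\{i\}$. Thus each such row has at least two $A$-neighbours in $T$. When $a<m$ we have $|T|=n-a+1\ge2$, so this requirement makes sense; when $a=m$ there are no such rows.

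These edges are disjoint from those of $H$, so $|A|\ge 2a-2+2(m-a)=2m-2$, and therefore $\mathrm{v}(I)=2m-2$.

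The step I expect to need the most care is the connectivity argument for $H$, together with justifying that $A\cap P=\emptyset$ may be assumed. The rest is bookkeeping.
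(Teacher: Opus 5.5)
Your proof is correct and follows the paper's route. Your blocks $R'\times U'$ and $([m]\setminus R')\times T$ are exactly the paper's $S_1$ and $S_2$ (with $s=m-a$) and give the same bounds $2(a-1)$ and $2(m-a)$, and your staircase witness of $2m-2$ cells matches the paper's $\prod_{i=2}^{m}x_{i,i-1}x_{i,i}$ up to the choice of associated prime. The differences are minor: you derive the colon criterion directly instead of quoting the Jaramillo--Villarreal set $\mathcal{A}_\Delta$, and your connectivity argument for $H$ (or, more simply, each column of $U'$ needing two $A$-neighbours in $R'$, else delete its unique neighbour) justifies the $|A_1|\ge 2(m-1-s)$ step more cleanly than the paper's two-matchings argument.
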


		\begin{Theorem}
		Let  $n \geq m\ge 1$ and  $ t\ge 2$ be integers. Let $\Delta_{m,n}$ be a chessboard complex  and $\mathcal{F}(\Delta_{m,n})$ its 
		facet ideal. Then 
		\[
	\mathrm{v}(\mathcal{F}(\Delta_{m,n})^{(t)})=	\begin{cases}
			t-1, &\text{if $m=1$,}\\
		mt,  &\text{if $n=m\ge 2$,}\\
		mt-1,&\text{if $n>m\ge 2$.}
			\end{cases}
		\]
	\end{Theorem}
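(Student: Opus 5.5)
Throughout, monomials are indexed by the cells $(i,j)$ of the $m\times n$ board. The plan is to reduce the statement to a combinatorial optimization over nonnegative integer weight matrices. The facets of $\Delta_{m,n}$ are the full rook placements (injections $[m]\to[n]$), so $\mathcal F(\Delta_{m,n})$ is squarefree and $\mathcal F(\Delta_{m,n})^{(t)}=\bigcap_{\mathfrak q}\mathfrak q^t$, with $\mathfrak q$ running over the minimal primes. These correspond to the minimal vertex covers of the facet clutter.

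\textbf{Step 1: the minimal covers.} By Hall's theorem (Frobenius--König), a set $C$ of cells meets every full rook placement if and only if there are $k$ rows $R$ and $k-1$ columns $B$ such that $C\supseteq R\times([n]\setminus B)$. Hence the minimal primes are $\mathfrak q_{R,B}=(x_{ij}: i\in R,\ j\notin B)$ with $|R|=k\ge 1$ and $|B|=k-1$. In particular every single row is a minimal prime. When $n=m$, every single column is also a minimal prime (the case $k=m$).

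\textbf{Step 2: the colon condition.} Since the ideal is monomial, the witnesses $f$ may be taken to be monomials. For a monomial $f$ and a monomial prime $\mathfrak q$, write $\deg_{\mathfrak q}f$ for the total exponent of $f$ in the variables of $\mathfrak q$. Then $(\mathfrak q^t:f)=\mathfrak q^{\max(t-\deg_{\mathfrak q}f,0)}$. Two distinct minimal primes are incomparable, so $\mathfrak q^s\supseteq\mathfrak p$ with $s\ge1$ forces $\mathfrak q=\mathfrak p$. It follows that $(\mathcal F^{(t)}:f)=\mathfrak p$ if and only if
\[
\deg_{\mathfrak p}f=t-1 \quad\text{and}\quad \deg_{\mathfrak q}f\ge t \text{ for all minimal primes } \mathfrak q\ne\mathfrak p .
\]
So $\mathrm v(\mathcal F^{(t)})$ is the minimum total weight of a nonnegative integer $m\times n$ matrix $W=(w_{ij})$ with these properties. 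For $m=1$ the ideal is the prime $(x_1,\dots,x_n)$, so the only condition is $\deg f=t-1$ and the answer is $t-1$.

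\textbf{Step 3: lower bounds.} Every row is a minimal prime, and at most one of them can be $\mathfrak p$. Summing over rows gives $\deg f\ge m t-1$, which proves the lower bound for $n>m$. For $n=m$, assume $\deg f=mt-1$ and derive a contradiction. Then every row sum equals $t$ except one, which is $t-1$. The columns are minimal primes as well, so the same holds for the column sums, and $\mathfrak p$ is both a row and a column. This is impossible since those primes are distinct, unless $\mathfrak p$ is not a line, in which case some row and some column both have weight $t-1$. The latter case forces two primes with weight $t-1$ and is excluded.

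\textbf{Step 4: upper bounds by explicit witnesses.} For $n=m$, take $\mathfrak p$ to be row $1$ and let $W$ have row sums $t,\dots,t$ except $t-1$ in row $1$, spread as evenly as possible over the columns. Then check $\deg_{\mathfrak q_{R,B}}\ge t$ directly. For $n>m$, use the extra columns to build a matrix of total weight $mt-1$. The candidate is weight $t-1$ in row $1$ and weight $t$ in every other row, with the mass distributed so that deleting any $k-1$ columns leaves weight at least $t$ in any $k$ rows. A natural choice puts each row's mass partly on the diagonal and partly on a shared column $m+1$, or spreads it cyclically over columns.

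I expect Step 4 to be the main obstacle. Deleting $k-1$ columns can remove weight from several rows simultaneously, so the rectangle inequalities for $2\le k\le m$ must be checked uniformly. I would first try to verify them via the weighted Hall condition, treating $W$ as a bipartite multigraph. Failing that, I would use a $t$-fold averaging of permutation matrices, in the spirit of Birkhoff, adjusted by one unit in row $1$.
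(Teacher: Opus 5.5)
Your Steps 1--3 are correct and coincide with the paper's lower-bound lemma. The row primes give $\deg f\ge mt-1$. For $n=m$ the column primes, none of which is a row prime when $m\ge 2$, give $\deg f\ge mt$. Your wording of that case is tangled; the clean version is that if $\mathfrak p$ is not a row the rows alone give $mt$, and if it is a row the columns do.

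The gap is Step 4, the upper bounds. This is where the real content of the theorem lies, and you leave it as a list of strategies. Worse, the witness you describe for $n=m$ cannot exist. Row sums $t-1,t,\dots,t$ total $mt-1$, so some column carries at most $t-1$. That column is a minimal prime different from $\mathfrak p$, which is precisely what your own Step 3 excludes. A witness for $n=m$ must have degree $mt$, with a surplus unit in another row. The paper takes $f=x_{1,1}^{t-1}x_{2,m}\prod_{i=2}^{m}x_{i,i-1}x_{i,i}^{t-1}$, with row sums $t-1,t+1,t,\dots,t$ and all column sums equal to $t$.

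For $n>m$ you never commit to a witness, and the \emph{shared column $m+1$} idea fails in its natural form. Suppose row $1$ puts its $t-1$ units in column $m+1$, so each row $i\ge 2$ has weight exactly $t$. Then $\mathfrak q_{\{1,i\},\{m+1\}}$ forces row $i$ to avoid column $m+1$ entirely. Meanwhile $\mathfrak q_{\{1,i\},\{i\}}$ forces row $i$ to put weight off its diagonal. The paper's $f=x_{1,m+1}^{t-1}\prod_{i=2}^{m}x_{i,i-1}x_{i,i}^{t-1}$ satisfies both through the subdiagonal entries.

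The check for a general prime (rows $U$, kept columns $V$, $|U|+|V|=n+1$) is a genuine case analysis:
\begin{enumerate}
\item If $1\notin U$, or $n=m$ and $2\in U$: the rows of $U$ carry at least $|U|t$ while each column carries at most $t$. So deleting $|U|-1$ columns leaves at least $t$.
\item If $1\in U$ (with $2\notin U$ when $n=m$): one restricts to the rows of $U$, which carry $|U|t-1$. If the column $j_0$ holding row $1$'s mass is deleted, it removes exactly $t-1$, and the other $|U|-2$ deleted columns remove at most $(|U|-2)t$. If $j_0$ is kept, the missing unit is $x_{\gamma,\gamma}$ with $\gamma\in (U\setminus\{1\})\cap V$. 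Otherwise $U\setminus\{1\}=[n]\setminus V$, and the missing unit is $x_{\delta,\delta-1}$ with $\delta=\min(U\setminus\{1\})$.
\end{enumerate}
Your weighted-Hall and Birkhoff-averaging ideas are only named, not executed. Without an explicit witness and this kind of verification, the upper bounds $\mathrm{v}\le mt$ and $\mathrm{v}\le mt-1$, and hence the theorem, remain unproved.
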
		
			
				\begin{Theorem}
				Let  $n \geq m\ge 1$ be integers. Let $\Delta_{m,n}$ be a chessboard complex  and $\mathcal{F}(\Delta_{m,n})$ its 
				facet ideal. Then 
				\[
				\mathrm{v}(\mathcal{F}(\Delta_{m,n})^{t})=	\begin{cases}
					t-1, &\text{if $m=1$ and $t\ge 1$,}\\
						mt-1,  &\text{if $2\le m< n\le 2m-2$ and $t\ge m$,}\\
						mt-1,  &\text{if $n\ge 2m-1$ and $t\ge 2$,}\\
					mt,&\text{if $n=m\ge 2$ and $t\ge m-1$.}
				\end{cases}
				\]
			\end{Theorem}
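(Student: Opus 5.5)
The plan is to compute $\mathrm{v}(\mathcal{F}(\Delta_{m,n})^{t})$ by proving matching lower and upper bounds. Write $I=\mathcal{F}(\Delta_{m,n})$ in the variables $x_{ij}$, $1\le i\le m$, $1\le j\le n$. The generators of $I$ are the monomials $x_{1\sigma(1)}\cdots x_{m\sigma(m)}$ with $\sigma\colon[m]\to[n]$ injective.

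The basic tool is a combinatorial criterion for membership in $I^t$. View a monomial $u$ as a bipartite multigraph on rows and columns, where the multiplicity of the edge $(i,j)$ is the exponent of $x_{ij}$. Then $u\in I^t$ if and only if $u$ contains a submonomial with every row degree equal to $t$ and every column degree at most $t$. Necessity is clear. For sufficiency, apply K\"onig's edge-colouring theorem: a bipartite multigraph of maximum degree $t$ is a union of $t$ matchings, and each matching covers all $m$ rows because every row has degree exactly $t$.

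The case $m=1$ is immediate, since $I=\mathfrak m$ is the maximal ideal and $\mathrm{v}(\mathfrak m^t)=t-1$. From now on assume $m\ge 2$.

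\textbf{Lower bound.} Let $(I^t:f)=\mathfrak p$ with $\mathfrak p$ a monomial prime. Then $x_{ij}f\in I^t$ for some variable $x_{ij}$. Since $I^t$ is generated in degree $mt$, this gives $\deg f\ge mt-1$ in all cases.

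For $m=n$ I will exclude $\deg f=mt-1$. In that case $x_{ij}f$ has degree exactly $mt$, so by the criterion it is itself a product of $t$ permutation monomials. Hence every row and column degree of $x_{ij}f$ equals $t$, which forces row $i$ and column $j$ to be the unique deficient row and column of $f$.

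Now use the Frobenius--K\"onig theorem: every minimal prime of $I$ (hence every prime in $\mathrm{Ass}(I^t)$, all of which contain a minimal prime) contains the cells of some $R\times C$ with $|R|+|C|=n+1$.
\begin{itemize}
\item If $|R|,|C|\ge 2$, then $\mathfrak p$ contains two variables in different rows, which contradicts the uniqueness of the deficient row.
\item If $|R|=1$, then $C=[n]$, so $\mathfrak p$ contains two variables in different columns, which contradicts the uniqueness of the deficient column.
\item The case $|C|=1$ is symmetric.
\end{itemize}
Hence $\deg f\ge mt$ when $m=n$.

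\textbf{Upper bound.} Take $\mathfrak p=(x_{11},\dots,x_{1n})$, the first row. For $n>m$ this is a minimal vertex cover of height $n$; for $n=m$ it is the cover with $R=\{1\}$ and $C=[n]$. I will construct a monomial $f$ whose row-$1$ degree is $t-1$ and whose other row degrees are at least $t$. Then automatically $f\notin I^t$. Moreover, for any variable $u\notin\mathfrak p$, the product $uf$ still has row-$1$ degree $t-1$, so $uf\notin I^t$. Since $(I^t:f)$ is a monomial ideal, this gives $(I^t:f)\subseteq\mathfrak p$. It therefore remains to arrange $x_{1j}f\in I^t$ for every $j$, which then also shows $\mathfrak p\in\mathrm{Ass}(I^t)$.

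\emph{Case $n>m$, degree $mt-1$.} Choose $f$ with row degrees $(t-1,t,\dots,t)$ and all column degrees at most $t-1$. Then $x_{1j}f$ has all row degrees $t$ and all column degrees at most $t$, so $x_{1j}f\in I^t$ by the criterion. Such an $f$ exists exactly when $n(t-1)\ge mt-1$, i.e. $(n-m)t\ge n-1$.
\begin{itemize}
\item For $n\ge 2m-1$ and $t\ge 2$ this holds, since $2(n-m)\ge n-1$.
\item For $m<n\le 2m-2$ and $t\ge m$ this holds, since $m(n-m)-(n-1)=(m-1)(n-m-1)\ge 0$.
\end{itemize}
This explains the hypotheses in the statement. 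The construction itself is a routine filling of an $m\times n$ table with prescribed row sums and bounded column sums.

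\emph{Case $m=n$, degree $mt$.} Choose $f$ with row degrees $(t-1,t+1,t,\dots,t)$, all column degrees equal to $t$, and with $x_{2j}\mid f$ for every $j$. The last condition needs $t+1\ge n$, which is where $t\ge m-1$ enters. Then $x_{1j}f$ has a single surplus, in row $2$ and column $j$. Deleting one copy of $x_{2j}$ leaves a monomial with all row and column degrees equal to $t$, hence a product of $t$ permutations, so $x_{1j}f\in I^t$.

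\textbf{Expected obstacle.} The main difficulty is the explicit existence of these fillings, in particular in the $m=n$ case, where the row-$2$ constraint ($x_{2j}\mid f$ for all $j$) must be reconciled with the exact column sums. I would first try a cyclic, Latin-square-type layout and check it against a Gale--Ryser-type feasibility condition. Once the fillings exist, the three displays combine to give the formula in the theorem.
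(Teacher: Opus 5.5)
Your proof is correct. Its skeleton matches the paper's: the bound $\alpha(I^t)-1=mt-1$, the row-one prime $(x_{1,1},\dots,x_{1,n})$, and a witness monomial of row-one degree $t-1$. The key tools differ, however.

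\textbf{The paper's route.} It has a membership criterion only for $n=m$ in degree $mt$ (Lemma~\ref{F^t}, via K\"onig's theorem). For $n>m$ it certifies $x_{1,\ell}f\in\mathcal{F}^t$ (with $\mathcal{F}=\mathcal{F}(\Delta_{m,n})$) by writing out explicit factorizations into $t$ generators, and it uses Lemma~\ref{colon} for the reverse inclusion. In the case $n=m$ it rules out the principal prime $(x_{\alpha,\beta})$ by the exchange $x_{\alpha,v}x_{u,\beta}f/x_{u,v}\in\mathcal{F}^t$.

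\textbf{Your route.} You prove once, via K\"onig's edge-colouring theorem, that $u\in I^t$ if and only if $u$ has a submonomial with all row degrees $t$ and all column degrees at most $t$. This turns every upper bound into a condition on row and column sums. For $n=m$ you exclude a principal associated prime because it must contain a Frobenius--K\"onig block $R\times C$ with $|R|+|C|=n+1\ge 3$, hence at least two variables.

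\textbf{What each buys.} Your route gives a sharper result: for $n>m$ it yields $\mathrm{v}(\mathcal{F}^t)=mt-1$ whenever $(n-m)t\ge n-1$. This unifies the two $n>m$ cases and relaxes the paper's hypothesis $t\ge m$ to $t\ge\lceil (n-1)/(n-m)\rceil$. The new threshold is strictly smaller when $m+1<n\le 2m-2$; for instance $m=5$, $n=8$ needs only $t\ge 3$. The paper's witness $(\prod_{i=1}^m x_{i,i}^{t-1})(\prod_{j=2}^m x_{j,m+1})$ has column $m+1$ of degree $m-1$, which is exactly why it needs $t\ge m$. The paper's route, in exchange, is fully explicit and has no existence step.

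\textbf{Your flagged obstacle.} The step you flag as the main obstacle is immediate. The entries are arbitrary nonnegative integers, so no Gale--Ryser condition enters. For $n=m$:
\begin{enumerate}
\item Take row $2$ to be $a_1,\dots,a_m$ with $1\le a_j\le t$ and $\sum_j a_j=t+1$; this is possible exactly when $t\ge m-1$.
\item Fill rows $1,3,\dots,m$ with row sums $(t-1,t,\dots,t)$ and column sums $t-a_j$.
\item Both totals equal $(m-1)t-1$, so the northwest-corner rule succeeds.
\end{enumerate}
The paper's $g$ in Theorem~\ref{n=m} is one such filling. A greedy fill likewise settles $n>m$ whenever $n(t-1)\ge mt-1$.

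\textbf{Two small repairs.} In the reverse inclusion, argue for every monomial $u\notin\mathfrak{p}$, not just every variable; the row-one degree count is identical. In the lower bound, note, as the paper tacitly does, that $\mathrm{v}_{\mathfrak{p}}$ of a monomial ideal is attained by a monomial $f$, which your use of the criterion on $x_{i,j}f$ requires.
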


		The paper is organized as follows: Section \ref{sec:prelim} introduces the basic facts that are
		used throughout the paper. In Section 	\ref{sec: Stanley-Reisner ideals}, we give some exact formulas for   $\mathrm{v}$-numbers of ordinary	and symbolic powers  of the Stanley-Reisner ideal of a chessboard complex.
	In Section 	\ref{sec:facet ideals}, we  provide  some exact formulas  for $\mathrm{v}$-numbers of  symbolic powers  of  the facet ideal    of a chessboard complex. We also 	show that the $\mathrm{v}$-number function  of the ordinary powers is asymptotically linear.

		Throughout the paper, we  assume  that    $n\geq m$ are two positive integers and 
		$x_{ij}$ is the square corresponding to the $i$-th row  and  the $j$-th column in  an $m\times n$ chessboard where  $1\leq i\leq m$ and $1\leq j\leq n$. By convention,  we use the matrix $(x_{i,j})_{m\times n}$ to represent  this chessboard. For example, we can represent the  chessboard on the left  by the matrix on the right
		
		\bigskip
		\begin{center}
			\begin{tabular}{lr}
				\begin{tabular}{|p{1cm}|p{1cm}|p{1cm}|}
					\hline
					$x_{1,1}$ & $x_{1,2}$ & $x_{1,3}$\\
					& & \\
					\hline
					$x_{2,1}$ & $x_{2,2}$ & $x_{2,3}$\\
					& & \\
					\hline
					$x_{3,1}$ & $x_{3,2}$ & $x_{3,3}$\\
					& & \\
					\hline
				\end{tabular}\qquad\qquad & \qquad$\begin{pmatrix}
					x_{1,1} & x_{1,2} & x_{1,3}\\
					\\
					x_{2,1} & x_{2,2} & x_{2,3}\\
					\\
					x_{3,1} & x_{3,2} & x_{3,3}\\
				\end{pmatrix}$
			\end{tabular}
		\end{center}
		
		\vspace{1cm}

		\section{Preliminaries}
		\label{sec:prelim}
		
		In this section, we introduce the notation and recall several basic results that will be used throughout the paper.
		For two positive integers $m\le n$, let $[m,n]=\{m,m+1,\ldots,n\}$. In particular, we write $[n]$ for $[1,n]$.

		A \emph{simplicial complex} $\Delta$ on a finite set $V(\Delta)$  is a collection of subsets of $V(\Delta)$  such that if $F \in \Delta$ and $G \subseteq F$ then $G \in \Delta$.  Each element $F\in\Delta$ is called a \emph{face} of $\Delta$. A \emph{facet} of $\Delta$ is a face that is maximal with respect to inclusion. We denote the set of all facets of $\Delta$ by $\mathrm{Facets}(\Delta)$. Clearly, $\Delta$ is uniquely determined by $\Facets(\Delta)$. If $\Facets(\Delta)=\{F_1,\ldots,F_m\}$, then we write $\Delta=\langle F_1,\ldots,F_m\rangle$. A subcomplex of $\Delta$ is a simplicial complex 	whose faces are also faces of $\Delta$. If $W\subset V(\Delta)$, the induced
		subcomplex of $\Delta$ over $W$ is the subcomplex 
		\[
		\Delta[W]=\{F \in \Delta \mid F\subseteq  W\}.
		\]
		A subset $C\subseteq V(\Delta)$ is called a \emph{vertex cover} of $\Delta$ if $C\cap F\neq\emptyset$ for every $F\in\operatorname{Facets}(\Delta)$. A vertex cover $C$ is called \emph{minimal} if no proper subset of $C$ is a vertex cover of $\Delta$.
		A subset $A \subseteq V(\Delta)$ is called \textit{independent} if  $F\not\subseteq A$ for all $F \in \operatorname{Facets}(\Delta)$. An independent set is called \textit{maximal} if it is maximal with respect to inclusion. For an independent set $A$ of $\Delta$, we define the \textit{neighbor set} of $A$ in $\Delta$, denoted by $N_{\Delta}(A)$, as
		\[
		N_{\Delta}(A) = \{x \in V(\Delta) \mid \{x\} \cup A \text{ contains a facet of } \Delta\}.
		\]

		\begin{Definition}\label{twoideals}
			Let $\Delta$ be a simplicial complex with vertex set $V(\Delta) = \{x_1, \ldots, x_n\}$. We consider the polynomial ring $S = \mathbb{K}[x_1, \ldots, x_n]$ in $n$ variables over a field $\mathbb{K}$. For every subset $F \subseteq V(\Delta)$, we define the squarefree monomial $\mathbf{x}_F = \prod_{x \in F} x$. We consider the following squarefree monomial ideals associated with the simplicial complex $\Delta$:
			\begin{enumerate}
				\item The \emph{facet ideal} of $\Delta$, defined as
				\[
				\mathcal{F}(\Delta) = (\mathbf{x}_F \mid F \in \operatorname{Facets}(\Delta)) \subseteq S.
				\]
				\item The \emph{Stanley--Reisner ideal} of $\Delta$, defined as
				\[
				I_\Delta = (\mathbf{x}_F \mid F \notin \Delta) \subseteq S.
				\]
			\end{enumerate}
		\end{Definition}
		\begin{Example}\label{simplechess}
		For a chessboard complex   $\Delta_{3,3}$,  we can obtain by  Definition \ref{twoideals}  that
		\begin{enumerate}
			\item the \emph{facet ideal} of $\Delta_{3,3}$ is
			\[
			\mathcal{F}(\Delta_{3,3}) = (x_{1,1}x_{2,2}x_{3,3}, x_{1,1}x_{2,3}x_{3,2}, x_{1,2}x_{2,1}x_{3,3}, x_{1,2}x_{2,3}x_{3,1}, x_{1,3}x_{2,1}x_{3,2}, x_{1,3}x_{2,2}x_{3,1}),
			\]
			\item 	the \emph{Stanley--Reisner ideal} of $\Delta_{3,3}$ is
			\begin{align*}
				I_{\Delta_{3,3}} = (&x_{1,1}x_{1,2}, x_{1,1}x_{1,3}, x_{1,2}x_{1,3}, x_{2,1}x_{2,2}, x_{2,1}x_{2,3}, x_{2,2}x_{2,3}, x_{3,1}x_{3,2}, x_{3,1}x_{3,3}, x_{3,2}x_{3,3}, \\
				&x_{1,1}x_{2,1}, x_{1,1}x_{3,1}, x_{2,1}x_{3,1},  x_{1,2}x_{2,2}, x_{1,2}x_{3,2}, x_{2,2}x_{3,2},  x_{1,3}x_{2,3}, x_{1,3}x_{3,3}, x_{2,3}x_{3,3}).
			\end{align*}
		\end{enumerate}
	\end{Example}
	
	\medskip
	For a graded ideal $I \subseteq S$, the $t$-th symbolic power of $I$ is defined as
	\[
	I^{(t)} = \left( \bigcap_{\mathfrak{p} \in  \mathrm{Min}(I)} I^t S_{\mathfrak{p}} \right) \cap S,
	\]
	where $\mathrm{Min}(I)$ is the set of  all minimal prime ideals of $I$ and $S_{\mathfrak{p}}$ is the localization of $S$ at $\mathfrak{p}$.

	For a monomial $f = \prod_{i=1}^m \prod_{j=1}^n x_{i,j}^{s_{i,j}} \in \mathbb{K}[x_{i,j} \mid i \in [m], j \in [n]]$, we define its degree with respect to $x_{i,j}$ as $\deg_{x_{i,j}}(f) = s_{i,j}$, and its total degree as
\[
\deg(f) = \sum_{i=1}^m \sum_{j=1}^n s_{i,j}.
\]
For a prime ideal $\mathfrak{p}$ generated by a subset of the variables, we further define
\[
\deg_{\mathfrak{p}}(f) = \sum_{x_{i,j} \in \mathfrak{p}} \deg_{x_{i,j}}(f).
\]

\begin{Lemma}{\em (\cite[Proposition 2.2]{F})}\label{lowbound}
	Let $I \subset S$ be a monomial ideal. Then
	\[
	\mathrm{v}_{\mathfrak{p}}(I) \ge \alpha(I) - 1 , \text{\  \  for all \ }  \mathfrak{p} \in \Ass(I),
	\]
	where $\alpha(I) = \min \{ \deg(f) \mid f \in I \setminus \{0\} \}$.
\end{Lemma}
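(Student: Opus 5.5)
The plan is a direct degree count, with no structure theory needed beyond the fact that associated primes of monomial ideals are generated by variables. Throughout, take $I$ to be a nonzero proper monomial ideal. If $I=S$ then $\Ass(I)=\emptyset$ and there is nothing to prove; if $I=0$ then $\alpha(I)$ is not defined.

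Fix $\mathfrak{p}\in\Ass(I)$. Let $d=\mathrm{v}_{\mathfrak{p}}(I)$, and choose $f\in S_d$ with $(I:f)=\mathfrak{p}$.

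\textbf{Step 1: $f\notin I$.} Since $\mathfrak{p}$ is a prime ideal, it is proper. Hence $1\notin(I:f)$, so $f\notin I$. In particular $f\neq 0$.

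\textbf{Step 2: $\mathfrak{p}$ contains a variable.} Because $I$ is a monomial ideal, every associated prime is generated by a subset of the variables. This subset is nonempty: otherwise $\mathfrak{p}=0$, so $(I:f)=0$, which forces $I=0$ since $I\subseteq (I:f)$. Pick a variable $x\in\mathfrak{p}$.

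\textbf{Step 3: compare degrees.} Since $x\in(I:f)$, we have $xf\in I$. Moreover $xf$ is nonzero because $S$ is a domain and $f\neq0$. It is homogeneous of degree $d+1$, so by the definition of $\alpha(I)$,
\[
d+1=\deg(xf)\ge \alpha(I).
\]
This gives $\mathrm{v}_{\mathfrak{p}}(I)=d\ge\alpha(I)-1$, as claimed.

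There is no real obstacle. The only points needing care are the degenerate cases: $\mathfrak{p}$ must be nonzero, which is handled in Step 2, and $I$ must be nonzero and proper for $\alpha(I)$ and $\Ass(I)$ to be meaningful. The argument does not require reducing $f$ to a monomial.
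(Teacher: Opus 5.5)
Your proof is correct, and it is essentially the standard argument behind this result. The paper gives no proof of its own; it only quotes the lemma from Ficarra. The argument: $\mathfrak{p}$ is a nonzero monomial prime, so it contains a variable $x$; then $xf$ is a nonzero element of $I$ of degree $\mathrm{v}_{\mathfrak{p}}(I)+1$, which forces $\alpha(I)\le \mathrm{v}_{\mathfrak{p}}(I)+1$. Your handling of the degenerate points ($f\notin I$, $\mathfrak{p}\neq 0$, $I$ nonzero and proper) is sound, and working with a homogeneous $f$ instead of a monomial one is harmless.
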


\begin{Lemma}{\em (\cite[Lemma 2.2]{CHJV})}\label{colon}
	Let $t$ be a positive  integer and $I \subseteq S$  a nonzero squarefree monomial ideal.  Let $J$ be a monomial ideal satisfying $I^t \subseteq J \subseteq I^{(t)}$.
	Assume that $\mathfrak{p}\in \Ass(I)$ and that $f$ is  a monomial such that
	$\deg_{\mathfrak{p}}(f) = t - 1 \quad \text{and} \quad \mathfrak{p}\subseteq(J : f)$.
	Then
	\[
	(J : f) = \mathfrak{p}.
	\]
\end{Lemma}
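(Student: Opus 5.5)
The hypothesis already gives $\mathfrak{p}\subseteq (J:f)$, so the plan is to prove only the reverse inclusion $(J:f)\subseteq \mathfrak{p}$. The argument is a degree count in the variables of $\mathfrak{p}$, using the primary decomposition of symbolic powers of squarefree monomial ideals.

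First I will record the structure of the objects involved.
\begin{enumerate}
\item Since $I$ is a squarefree monomial ideal, it is radical. Hence $\Ass(I)=\Min(I)$, and every $\mathfrak{q}\in\Min(I)$ is generated by a subset of the variables.
\item For each $\mathfrak{q}\in\Min(I)$ we have $IS_{\mathfrak{q}}=\mathfrak{q}S_{\mathfrak{q}}$. Therefore $I^tS_{\mathfrak{q}}\cap S=\mathfrak{q}^tS_{\mathfrak{q}}\cap S=\mathfrak{q}^t$, because a power of a prime generated by variables is primary.
\item From the definition of the symbolic power this gives
\[
I^{(t)}=\bigcap_{\mathfrak{q}\in\Min(I)}\mathfrak{q}^t .
\]
In particular $I^{(t)}\subseteq \mathfrak{p}^t$, since $\mathfrak{p}\in\Min(I)$.
\item A monomial $g$ lies in $\mathfrak{p}^t$ if and only if $\deg_{\mathfrak{p}}(g)\ge t$, because $\mathfrak{p}$ is generated by variables.
\end{enumerate}

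Next, since $J$ is a monomial ideal and $f$ is a monomial, $(J:f)$ is a monomial ideal. So it suffices to show that no monomial $u\notin\mathfrak{p}$ lies in $(J:f)$.

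Suppose instead that $u\notin\mathfrak{p}$ and $uf\in J$. Because $u\notin\mathfrak{p}$ and $\mathfrak{p}$ is generated by variables, $u$ involves no variable of $\mathfrak{p}$, so $\deg_{\mathfrak{p}}(u)=0$. Then
\[
\deg_{\mathfrak{p}}(uf)=\deg_{\mathfrak{p}}(f)=t-1 .
\]
On the other hand, $uf\in J\subseteq I^{(t)}\subseteq \mathfrak{p}^t$ forces $\deg_{\mathfrak{p}}(uf)\ge t$, which is a contradiction. Hence every monomial of $(J:f)$ lies in $\mathfrak{p}$, so $(J:f)\subseteq\mathfrak{p}$, and with the hypothesis this gives $(J:f)=\mathfrak{p}$.

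The only step needing care is the description $I^{(t)}=\bigcap_{\mathfrak{q}\in\Min(I)}\mathfrak{q}^t$, specifically the identity $I^tS_{\mathfrak{q}}\cap S=\mathfrak{q}^t$. I would justify it by the localization observation in item 2, or cite it as the standard fact for squarefree monomial ideals. The lower inclusion $I^t\subseteq J$ is not actually needed for this argument.
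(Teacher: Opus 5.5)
Your proof is correct and complete. The paper does not prove this lemma itself; it imports it from \cite[Lemma 2.2]{CHJV}. Your reduction $(J:f)\subseteq (I^{(t)}:f)\subseteq(\mathfrak{p}^t:f)=\mathfrak{p}$ is the standard short argument for it: it uses $I^{(t)}=\bigcap_{\mathfrak{q}\in\Min(I)}\mathfrak{q}^t$ together with $\deg_{\mathfrak{p}}(f)=t-1$. You are also right that the inclusion $I^t\subseteq J$ is never used.
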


\begin{Lemma}{\em (\cite[Lemma 2.3]{CHJV})}\label{symboliccolon}
	Let $t$ be a positive  integer and $I \subseteq S$  a nonzero squarefree monomial ideal. Let $\mathfrak{p}\in \Ass(I)$ and  $f$  a nonzero monomial in $S$. Then
	\[
	(I^{(t)} : f) = \mathfrak{p}
	\]
	if and only if
	\[
	\deg_{\mathfrak{p}}(f) = t - 1 \quad \text{and} \quad \deg_{\mathfrak{q}}(f) \ge t
	\]
	for $\mathfrak{q}\in \Ass(I)$ such that $\mathfrak{q} \neq \mathfrak{p}$.
\end{Lemma}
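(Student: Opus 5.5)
The plan is to reduce everything to the primary decomposition of $I^{(t)}$ and compute the colon one prime at a time. Since $I$ is a squarefree monomial ideal, it is radical, so $\Ass(I)=\Min(I)$ and each $\mathfrak q\in\Ass(I)$ is generated by a subset of the variables. The definition of symbolic power then gives the standard description
\[
I^{(t)}=\bigcap_{\mathfrak q\in \Ass(I)} \mathfrak q^{t}.
\]
Colon commutes with finite intersections, so
\[
(I^{(t)}:f)=\bigcap_{\mathfrak q\in\Ass(I)}(\mathfrak q^t:f).
\]

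The first key step is the one-prime computation. Let $\mathfrak q$ be generated by variables and let $f$ be a monomial. I claim
\[
(\mathfrak q^t:f)=\mathfrak q^{\,t-\deg_{\mathfrak q}(f)},
\]
with the convention that $\mathfrak q^{a}=S$ when $a\le 0$. To see this, write $f=f_1f_2$, where $f_1$ involves only the variables of $\mathfrak q$ and $f_2$ involves none of them. A monomial $g$ lies in $\mathfrak q^t$ exactly when $\deg_{\mathfrak q}(g)\ge t$. Hence a monomial $u$ satisfies $uf\in\mathfrak q^t$ iff $\deg_{\mathfrak q}(u)+\deg_{\mathfrak q}(f)\ge t$. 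Since all ideals involved are monomial, the claim follows. Therefore
\[
(I^{(t)}:f)=\bigcap_{\mathfrak q\in\Ass(I),\ \deg_{\mathfrak q}(f)<t}\mathfrak q^{\,t-\deg_{\mathfrak q}(f)}.
\]

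The "if" direction is now immediate. Under the stated hypotheses, the only prime with $\deg_{\mathfrak q}(f)<t$ is $\mathfrak p$, and its exponent is $t-(t-1)=1$, so the intersection is exactly $\mathfrak p$.

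The "only if" direction is the step needing a small argument. Suppose the intersection above equals $\mathfrak p$, and let $\mathfrak q_1,\dots,\mathfrak q_r$ be the primes with $\deg_{\mathfrak q_i}(f)<t$. We have $r\ge1$, since otherwise the colon would be $S$. Taking radicals gives $\mathfrak p=\mathfrak q_1\cap\cdots\cap\mathfrak q_r$. Then $\mathfrak p\subseteq \mathfrak q_i$ for all $i$. Moreover $\mathfrak p\supseteq \mathfrak q_1\cdots\mathfrak q_r$, so primality gives $\mathfrak q_j\subseteq\mathfrak p$ for some $j$. Thus $\mathfrak p=\mathfrak q_j\in\Ass(I)$, and $\mathfrak p\subseteq\mathfrak q_i$ for every $i$. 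The minimal primes of $I$ are pairwise incomparable, so every $\mathfrak q_i$ equals $\mathfrak p$. Hence $\mathfrak p$ is the only associated prime with $\deg_{\mathfrak p}(f)<t$, which gives $\deg_{\mathfrak q}(f)\ge t$ for all $\mathfrak q\ne\mathfrak p$.

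It remains to show $\deg_{\mathfrak p}(f)=t-1$. Let $a=t-\deg_{\mathfrak p}(f)\ge1$, so that $\mathfrak p^{a}=\mathfrak p$. Since $\mathfrak p$ is a nonzero ideal generated by variables, it contains a variable of degree $1$, and $\mathfrak p^a$ is generated in degree $a$. Comparing degrees forces $a=1$. This pins down $\deg_{\mathfrak p}(f)=t-1$ and completes the argument.
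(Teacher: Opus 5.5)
Your proof is correct and complete. The paper gives no proof of this statement; it imports it verbatim from \cite[Lemma 2.3]{CHJV}, so there is no in-paper argument to compare against. Your route is a natural self-contained argument: it goes through $I^{(t)}=\bigcap_{\mathfrak q\in\Ass(I)}\mathfrak q^{t}$ and the one-prime formula $(\mathfrak q^{t}:f)=\mathfrak q^{\,t-\deg_{\mathfrak q}(f)}$. These are exactly the decompositions the paper relies on when it applies the lemma, namely $I_{\Delta_{m,n}}^{(t)}=\bigcap_F\mathfrak q_F^t$ and $\mathcal{F}(\Delta_{m,n})^{(t)}=\bigcap\mathfrak p_{U,V}^t$. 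It also gives you more than the lemma states: the explicit colon $(I^{(t)}:f)=\bigcap_{\deg_{\mathfrak q}(f)<t}\mathfrak q^{\,t-\deg_{\mathfrak q}(f)}$ for every monomial $f$.

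Two small remarks. In the ``only if'' direction, the passage to radicals and to the product $\mathfrak q_1\cdots\mathfrak q_r$ is unnecessary. Already $\mathfrak p=\bigcap_i\mathfrak q_i^{a_i}\subseteq\mathfrak q_i$ for each $i$, and $\mathfrak p\in\Ass(I)=\Min(I)$ is a hypothesis, so incomparability of minimal primes gives $\mathfrak q_i=\mathfrak p$ at once. In the final step, you should state explicitly that $\mathfrak p\neq 0$ because $0\neq I\subseteq\mathfrak p$. This is the only place the hypothesis $I\neq 0$ enters, and it is needed: for $I=0$ and $t\ge 2$ the ``only if'' direction genuinely fails.
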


	\medskip
\section{The $\mathrm{v}$-number of powers of the Stanley-Reisner ideals}
\label{sec: Stanley-Reisner ideals}

In this section, we give some exact formulas for   $\mathrm{v}$-numbers of ordinary	and symbolic powers  of the Stanley-Reisner ideal $I_{\Delta_{m,n}}$  of a chessboard complex $\Delta_{m,n}$.

\medskip

Recall that the Stanley-Reisner ideal of a chessboard complex $\Delta_{m,n}$ is
\[
I_{\Delta_{m,n}}=(x_{i,j}x_{i,k}\mid i\in[m],\,1\le j<k\le n)+(x_{p,r}x_{q,r}\mid1\le p<q\le m,r\in[n]).
\]
Therefore, if $n=m=1$, then $I_{\Delta_{1,1}} =0$, so $\mathrm{v}(I_{\Delta_{1,1}}^t)=\mathrm{v}(I_{\Delta_{1,1}}^{(t)})=0$ for all $t\ge 1$. If $n\ge 2$ and $m=1$, then $I_{\Delta_{1,n}}$ is the edge ideal of a complete graph  with $n$ vertices. 		By \cite[Theorem 5.5]{CZW},  $I_{\Delta_{1,n}}$ has a linear resolution. By \cite[Theorem 5.1]{F},   $\mathrm{v}(I_{\Delta_{1,n}}^t)=2t-1$ for all $t\ge 1$.
Moreover, by \cite[Theorem 3.3]{CHJV}, $\mathrm{v}(I_{\Delta_{1,n}}^{(t)})=t + \left\lceil \frac{t-1}{n-1} \right\rceil$ for all $t\ge 1$. 
In the following, we assume that $m\ge 2$.

For a monomial $f\in S$, we define the support of $f$ by $\supp(f)=\{x_{i,j}: x_{i,j}\mid f\}$.  For a monomial ideal $I\subset S$,  let $\mathcal{G}(I)$ be  the unique minimal set of its monomial generators,
and $\supp(I) = \bigcup_{u\in \mathcal{G}(I)}\supp(u)$.

\subsection{Ordinary  powers} In this  subsection,  we will  provide  some exact formulas for  $\mathrm{v}$-numbers of ordinary  powers  of the Stanley-Reisner ideal $I_{\Delta_{m,n}}$.  Let's first consider the first power.
\begin{Theorem}\label{vstanley}
	Let $\Delta_{m,n}$ be a chessboard complex with $n \geq m \geq 2$. Then
	\[
	\mathrm{v}(I_{\Delta_{m,n}})=
	\begin{cases}
		m-1, &\text{if $m=n$},\\
		m, &\text{if $m<n$}.
	\end{cases}
	\]
\end{Theorem}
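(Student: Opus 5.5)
We view $I_{\Delta_{m,n}}$ as the edge ideal of the \emph{rook graph} $G$ on the vertex set $\{x_{i,j}\}$, where two squares are adjacent exactly when they lie in the same row or the same column. We then use the combinatorial description of Jaramillo and Villarreal \cite{JV}, cited in the introduction. It says that $\mathrm{v}(I(G))$ is the minimum of $|A|$ over independent sets $A$ of $G$ for which $N_G(A)$ is a minimal vertex cover of $G$, and that the corresponding witness is $f=\mathbf{x}_A$ with $(I:f)=(N_G(A))$. Independent sets of $G$ are exactly admissible rook configurations, i.e. faces of $\Delta_{m,n}$. Minimal vertex covers are the complements of maximal independent sets, i.e. complements of facets of $\Delta_{m,n}$. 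So the whole problem becomes one about rook placements.

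\textbf{Complement of $N_G(A)$.} Let $A$ be a rook configuration with $|A|=k\le m$. It occupies $k$ rows and $k$ columns, leaving $r=m-k$ free rows and $c=n-k$ free columns. A square lies outside $N_G(A)$ precisely when it belongs to $A$ or lies in a free row and a free column. Hence
\[
V\setminus N_G(A)=A\cup(\text{free rows}\times\text{free columns}).
\]
Now $N_G(A)$ is a minimal vertex cover if and only if this set is a maximal independent set. The free rows $\times$ free columns block is a complete rook graph on an $r\times c$ grid, so it is independent only if $rc\le 1$. If $rc=0$, the set is just $A$, and $A$ is maximal because no free row meets a free column. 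If $rc=1$, the set is $A$ plus one square, which is a full placement and hence maximal. Therefore the condition is exactly $(m-k)(n-k)\le 1$.

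\textbf{Minimizing $k$.} We need the least $k$ with $(m-k)(n-k)\le 1$.
\begin{itemize}
\item If $m=n$: taking $k=m-1$ gives $(m-k)(n-k)=1$, which works. For $k\le m-2$ we get $(m-k)^2\ge 4$, which fails. So $\mathrm{v}=m-1$.
\item If $n>m$: taking $k=m-1$ gives $1\cdot(n-m+1)\ge 2$, which fails, and smaller $k$ makes the product larger. Taking $k=m$ gives $0$, which works. So $\mathrm{v}=m$.
\end{itemize}
For the upper bounds we also write the witnesses explicitly and check them directly. In the case $m=n$, take $f=x_{1,1}x_{2,2}\cdots x_{m-1,m-1}$; then $(I:f)$ is the prime generated by all variables except $x_{1,1},\dots,x_{m-1,m-1},x_{m,m}$. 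In the case $n>m$, take $f=x_{1,1}\cdots x_{m,m}$.

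\textbf{Main obstacle.} The delicate step is the lower bound, namely that no monomial $f$ of smaller degree works. This requires the reduction from an arbitrary monomial $f$ to a squarefree $\mathbf{x}_A$ with $A$ independent, which is exactly the content of \cite{JV}. If we wanted to avoid citing it, we would argue directly as follows. Suppose $(I:f)=\mathfrak p$. Then $f\notin I$, so $A:=\supp(f)$ is independent, and $(I:f)=(I:\mathbf{x}_A)$ because $I$ is generated in degree $2$ by squarefree monomials. This ideal equals $(N_G(A))$ plus the edges among the free squares, and it is prime only when $(m-k)(n-k)\le 1$, as computed above.
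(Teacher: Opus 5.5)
Your proof is correct, but your main route differs from the paper's. The paper does not use the Jaramillo--Villarreal characterization for this theorem. For the upper bound it uses the same witnesses $\prod_{i=1}^{m-1}x_{i,i}$ and $\prod_{i=1}^{m}x_{i,i}$ that you give. For the lower bound it argues directly with a monomial $f$ such that $(I:f)=\mathfrak q$ is prime. If $\deg f\le m-2$ (resp.\ $\deg f\le m-1$ when $n>m$), then some two rows and some column (resp.\ some row and two columns) contain no variable dividing $f$. This yields two variables in a common column (resp.\ row) that lie outside $(I:f)$, while their product lies in $I\subseteq\mathfrak q$, contradicting primality.

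Your fallback in the last paragraph is this same argument repackaged. The ideal $(I:\mathbf{x}_A)=(N_G(A))+I(G[B])$ fails to be prime exactly when the free block $B$ contains an edge, and such an edge is the pair the paper exhibits.

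What your main route buys is a sharper, uniform statement. It identifies all optimal squarefree witnesses, namely the rook configurations $A$ with $(m-|A|)(n-|A|)\le 1$. Both cases then fall out of one inequality, and one sees why the answer jumps from $m-1$ to $m$ once $n>m$. The cost is that the only nontrivial step, reducing an arbitrary witness $f$ to a squarefree $\mathbf{x}_A$ with $A$ independent, is delegated to a theorem that applies only to squarefree ideals.

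The paper's elementary argument is self-contained. More importantly for what follows, it is the version that extends to powers: the bound $\mathrm{v}(I^t)\ge m$ in Lemma~\ref{lowbound1} reuses the same free-row/free-column contradiction. There the Jaramillo--Villarreal theorem is unavailable, because $I^t$ is not squarefree.
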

\begin{proof}
	Let $I = I_{\Delta_{m,n}}$. Choose $\mathfrak{p} = (x_{i,j} \mid i \in [m], j \in [n], i \neq j)$, then
	\[
	\mathfrak{p} =
	\begin{cases}
		\left(I : \prod_{i=1}^{m-1} x_{i,i}\right), & \text{if } m = n, \\[1ex]
		\left(I : \prod_{i=1}^{m} x_{i,i}\right), & \text{otherwise}.
	\end{cases}
	\]
	Therefore,
	\[
	\mathrm{v}(I) \le
	\begin{cases}
		m - 1, & \text{if } m = n, \\
		m, & \text{otherwise}.
	\end{cases}
	\]
	
	Let $f$ be a monomial such that $\deg(f) = \mathrm{v}(I)$. Then there exists some $\mathfrak{q} \in \mathrm{Ass}(I)$ such that
	\[
	(I : f) = \mathfrak{q}.
	\]
	We distinguish into the following two cases:
	
	(1) Assume that $m = n$. If $\deg(f) \le m - 2$, then there exist at least two rows, say $i$ and $j$, and two columns, say $r$ and $s$, in the chessboard such that no variables from these rows and columns divide $f$. Therefore,
	\[
	\mathrm{supp}(f) \cap \{x_{i,k}, x_{j,k}, x_{k,r}, x_{k,s} \mid k \in [m]\} = \emptyset.
	\]
	By the definition of $I$, $x_{i,r}f \notin I$ and $x_{j,r}f \notin I$, implying $x_{i,r}, x_{j,r} \notin (I : f) = \mathfrak{q}$. However, again using   the definition of $I$,  $x_{i,r}x_{j,r} \in I \subseteq \mathfrak{q}$, which contradicts the fact that $\mathfrak{q}$ is a prime ideal. Thus, $\mathrm{v}(I) = \deg(f) \ge m - 1$.
	
	(2) Suppose  $m<n$.  If $\deg(f)\le m-1$, then  there are at least one row, say $\alpha$,  and two columns, say $\beta$ and $\gamma$, in the chessboard such that  no variables from these rows and columns divide $f$.
	Therefore,
	\[
	\supp(f)\cap\{x_{\alpha,\ell},x_{k,\beta},x_{k,\gamma}\mid \ell\in[n],k\in[m]\}=\emptyset.
	\]
	Thus $x_{\alpha,\beta}f, x_{\alpha,\gamma}f \notin I$, implying $x_{\alpha,\beta}, x_{\alpha,\gamma} \notin (I:f) = \mathfrak{q}$. However, $x_{\alpha,\beta}x_{\alpha,\gamma}\in I\subseteq\mathfrak{q}$, which contradicts the fact that $\mathfrak{q}$ is prime.  Thus, $\mathrm{v}(I)=\deg(f)\ge m$.
\end{proof}

		We first  consider the $\mathrm{v}$-numbers of ordinary powers  of  the Stanley-Reisner ideal of a chessboard complex $\Delta_{m,n}$.
		Let $t\ge 2$ be an integer.
		
		\begin{Lemma}\label{lowbound1}
			Let $\Delta_{m,n}$ be a chessboard complex with $n \geq m \geq 2$. Then
			\[
			\mathrm{v}(I_{\Delta_{m,n}}^t)\ge \max\{2t-1,m\} .
			\]
		\end{Lemma}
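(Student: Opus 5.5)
Write $I=I_{\Delta_{m,n}}$. We prove the two lower bounds $\mathrm{v}(I^t)\ge 2t-1$ and $\mathrm{v}(I^t)\ge m$ separately.

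\emph{The bound $2t-1$.} Every minimal generator of $I$ has degree $2$, so $\alpha(I^t)=2t$. Lemma \ref{lowbound} then gives $\mathrm{v}_{\mathfrak p}(I^t)\ge 2t-1$ for every $\mathfrak p\in\Ass(I^t)$. Taking the minimum over $\mathfrak p$ yields $\mathrm{v}(I^t)\ge 2t-1$.

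\emph{The bound $m$.} We argue by contradiction, reducing the power case to the counting argument in the proof of Theorem \ref{vstanley}. Let $f$ be a monomial with $(I^t:f)=\mathfrak q$ for some $\mathfrak q\in\Ass(I^t)$, and suppose $\deg(f)\le m-1$. The variables dividing $f$ meet at most $m-1$ rows and at most $m-1\le n-1$ columns. Hence there is a row $\alpha$ containing no variable of $\supp(f)$, and a column $\beta$ containing no variable of $\supp(f)$.
\begin{itemize}
\item If $n>m$, the support of $f$ misses at least two columns $\beta,\gamma$.
\item If $n=m$, it misses at least one column $\beta$, and we also pick a second missed row or column when it is available.
\end{itemize}

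The key observation concerns the variable $y=x_{\alpha,\beta}$. We claim $y\notin(I^t:f)$, that is, $yf\notin I^t$. A monomial of $I^t$ is divisible by a product of $t$ generators, and each generator is a pair of variables in a common row or a common column. Since no variable of $f$ lies in row $\alpha$ or column $\beta$, the variable $y$ cannot be paired with anything in $yf$. Therefore any product of $t$ generators dividing $yf$ would already divide $f$, so $f\in I^t$, which forces $(I^t:f)=S$, a contradiction.

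We then finish using primeness of $\mathfrak q$. Since $I^t\subseteq\mathfrak q$ and $\mathfrak q$ is prime, we have $I\subseteq\mathfrak q$. If $n>m$, both $x_{\alpha,\beta}$ and $x_{\alpha,\gamma}$ are isolated in the above sense, so neither lies in $\mathfrak q$. But their product lies in $I\subseteq\mathfrak q$, contradicting primeness. Hence $\deg(f)\ge m$.

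\emph{Main obstacle: the case $n=m$.} The degree count alone only guarantees one free row and one free column, because $m-1$ variables can cover $m-1$ rows and $m-1$ columns. The plan is to use that $\mathfrak q$ must contain one of $x_{\alpha,k}$ or $x_{\alpha,\beta}$ for each generator $x_{\alpha,\beta}x_{\alpha,k}$ of $I$. Since $x_{\alpha,\beta}\notin\mathfrak q$, every $x_{\alpha,k}$ with $k\ne\beta$ lies in $\mathfrak q$, and likewise every $x_{k,\beta}$ with $k\neq\alpha$ lies in $\mathfrak q$. For such a variable, say $x_{\alpha,k}$, the condition $x_{\alpha,k}f\in I^t$ requires $x_{\alpha,k}$ to be paired with an entry of $f$ in column $k$. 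This shows $f$ already contains $t-1$ disjoint pairs forming $I^{t-1}$ together with an entry in every column $k\ne\beta$. Since $m-1$ columns must be hit while also supplying $t-1$ pairs, we get $\deg f\ge (m-1)+(\text{extra})$. Here the intended equality $\deg f= m-1$ must be ruled out by a short count showing that the pairs cannot all use the column-hitting entries (each entry hits only one column). We expect this count to be the delicate step, and we would check small cases such as $m=n=2$ directly.
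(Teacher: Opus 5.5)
Your bound $\mathrm{v}(I^t)\ge 2t-1$ and your argument for $n>m$ are correct, but the case $n=m$ is not proved: it ends with a count you only ``expect'' to work. This is the substantive case. For $t=1$ and $n=m$ one has $\mathrm{v}(I)=m-1$ (Theorem \ref{vstanley}), so the argument must use $t\ge 2$ essentially. Moreover, the count as you phrase it, via columns (``each entry hits only one column''), cannot work by itself. Take $m=n=3$ and $\alpha=\beta=3$. The monomial $f=x_{1,1}x_{1,2}$ has degree $m-1$, lies in $I$, and meets every column $k\neq\beta$, because a generator lying in a common row uses two entries in two different columns. So the claimed inequality $\deg f\ge (m-1)+(\text{extra})$ is false as stated. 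This $f$ is excluded only by the row condition: it misses row $2$.

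The missing step is short and uses ingredients you already have. Your argument with $x_{\alpha,k}\in\mathfrak q$ gives $f\in I^{t-1}$. The paper gets this directly: if $x_{a,b}f=gu_1\cdots u_t$ with $x_{a,b}\in\mathfrak q$, then either $x_{a,b}$ divides some $u_k$, so $f\in I^{t-1}$, or $f\in I^t$. Since $t\ge 2$, some generator of $I$ divides $f$, so two of the at most $m-1$ variables of $f$ share a row or a column; say a row, the column case being analogous. Then $f$ occupies at most $m-2$ rows, leaving two free rows $\alpha_1,\alpha_2$, and since $n\ge m$ there is still a free column $\beta$. Your isolation argument now gives $x_{\alpha_1,\beta}x_{\alpha_2,\beta}\in I\subseteq\mathfrak q$ with neither factor in $\mathfrak q$, a contradiction. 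This is the paper's proof, and it treats $n>m$ and $n=m$ uniformly. Alternatively, you can finish along your own route. The variable $f$ must meet all $m-1$ columns $k\neq\beta$ and, symmetrically via the $x_{k,\beta}$, all $m-1$ rows $k\neq\alpha$. With $\deg f\le m-1$ this makes $f$ a squarefree rook placement, so $f\notin I$, contradicting $f\in I^{t-1}\subseteq I$.
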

		\begin{proof}
			Let $I = I_{\Delta_{m,n}}$. First, we  show that $\mathrm{v}(I^t)\ge m$. Suppose for a contradiction that $\mathrm{v}(I^t)\le m-1$. By the definition of the  $\mathrm{v}$-number, there exists  some monomial $f\notin I^t$ of degree $d \le m-1$ such that $\mathfrak{p} = (I^t : f)$, where $\mathfrak{p} \in \Ass(I^t)$.
			We can write
			\begin{align*}
				f =\prod\limits_{k=1}^{d} x_{i_k,j_k},
				\tag{1}\label{eq:my_equation1}
			\end{align*}
			where $i_k \in [m]$ and $j_k \in [n]$ for all $1 \le k \le d$.
			We claim that $f\in I^{t-1}$.
			
			Indeed, since $\mathfrak{p} = (I^t : f)$,  $x_{a,b}f\in I^t$ for any $x_{a,b}\in \mathfrak{p}$. Thus we   can write
			\[
			x_{a,b}f=gf_1\cdots f_t,
			\]
			where $f_1,\ldots,f_t\in \mathcal{G}(I)$  and  $g$ is a monomial. If $x_{a,b}\mid f_k$ for some $k\in[t]$, then 
			\[
			f = g \left(\frac{f_k}{x_{a,b}}\right) \prod_{j \neq k} f_j \in I^{t-1}.
			\]
			Otherwise, $x_{a,b}\mid g$ and $f=(g/x_{a,b})f_1\cdots f_t\in I^t$.
			
			By the definition of $I$,  we know that
			in the expression (\ref{eq:my_equation1}) of $f$, there exist $1\le r<s\le d$
			such that  $i_r=i_s$ or $j_r=j_s$.  Without loss of generality, let us assume that $i_r=i_s$.
			Thus $|[m]\setminus\{i_1,\ldots,i_d\}|\ge 2$ and $|[n]\setminus\{j_1,\ldots,j_d\}|\ge 1$. Therefore, there exist distinct $\alpha_1,\alpha_2\in [m]\setminus\{i_1,\ldots,i_d\}$ and $\beta\in [n]\setminus\{j_1,\ldots,j_d\}$. This forces $x_{\alpha_1,\beta}x_{\alpha_2,\beta}\in I$, so $x_{\alpha_1,\beta}x_{\alpha_2,\beta}f\in I^t$. This implies that  $x_{\alpha_1,\beta}x_{\alpha_2,\beta}\in(I^t:f)=\mathfrak{p}$. Thus, $x_{\alpha_1,\beta}\in\mathfrak{p}$ or $x_{\alpha_2,\beta}\in\mathfrak{p}$.
			
			If $x_{\alpha_1,\beta}\in\mathfrak{p}$, then $x_{\alpha_1,\beta}f\in I^t$. Thus we can write
			\begin{align*}
				x_{\alpha_1,\beta}f=hu_1\cdots u_t,
				\tag{2}\label{eq:my_equation2}
			\end{align*}
			where $u_1,\ldots,u_t\in \mathcal{G}(I)$ and  $h$ is a monomial.  Since $\alpha_1\notin\{i_1,\ldots,i_d\}$ and $\beta\notin \{j_1,\ldots,j_d\}$,  $x_{\alpha_1,\beta}\nmid u_k$ for all  $k\in[t]$. It follows from the expression (\ref{eq:my_equation2}) of $x_{\alpha_1,\beta}f$
			that $x_{\alpha_1,\beta}\mid h$. Therefore, $f=(h/x_{\alpha_1,\beta})u_1\cdots u_t\in I^t$, which contradicts the assumption that $f\notin I^t$. Thus, $x_{\alpha_1,\beta}\notin\mathfrak{p}$. By the same arguments, we also can obtain  $x_{\alpha_2,\beta}\notin\mathfrak{p}$. This contradicts that $x_{\alpha_1,\beta}\in\mathfrak{p}$ or $x_{\alpha_2,\beta}\in\mathfrak{p}$.
			
			Therefore, $\mathrm{v}(I^t)\ge m$. Combining this with Lemma \ref{lowbound}, we conclude that $\mathrm{v}(I^t)\ge \max\{2t-1,m\}$.
		\end{proof}

		\begin{Theorem}
			Let $n \geq m \geq 4$ and $t$ be three integers such that  $2\le t\le \frac{m}{2}$.  Suppose  that
			$\Delta_{m,n}$ is a chessboard complex. Then
			\[
			\mathrm{v}(I_{\Delta_{m,n}}^t)=m .
			\]
		\end{Theorem}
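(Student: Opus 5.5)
The lower bound is already available. By Lemma \ref{lowbound1} we have $\mathrm{v}(I_{\Delta_{m,n}}^t)\ge \max\{2t-1,m\}$, and the hypothesis $2t\le m$ gives $\max\{2t-1,m\}=m$. So the whole task is the upper bound $\mathrm{v}(I^t)\le m$, where $I=I_{\Delta_{m,n}}$. For this I would produce a monomial $f$ of degree $m$ and a prime $\mathfrak{p}\in\Ass(I)$ with $(I^t:f)=\mathfrak{p}$.

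Set-up of the construction:
\begin{enumerate}
\item Take the same prime as in Theorem \ref{vstanley}, namely $\mathfrak{p}=(x_{i,j}\mid i\neq j)$, the complement of the diagonal facet $\{x_{1,1},\dots,x_{m,m}\}$. It is a minimal prime of $I$, so $\mathfrak{p}\in\Ass(I)$.
\item Apply Lemma \ref{colon} with $J=I^t$. Then it suffices to find a monomial $f$ with $\deg(f)=m$, $\deg_{\mathfrak{p}}(f)=t-1$ and $x_{a,b}f\in I^t$ for every $a\neq b$.
\end{enumerate}
The condition $\deg_{\mathfrak{p}}(f)=t-1$ forces $f$ to consist of $t-1$ off-diagonal variables and $m-t+1$ diagonal variables. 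We view $I$ as the edge ideal of the rook graph, in which two squares are adjacent when they share a row or a column. In this language, $x_{a,b}f\in I^t$ means that the multiset $\supp(x_{a,b}f)$ contains $t$ pairwise disjoint edges.

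Candidate for $f$. I would first try
\[
f=\prod_{k=1}^{t-1}x_{2k-1,2k}\cdot\prod_{i\in D}x_{i,i},\qquad |D|=m-t+1.
\]
Each off-diagonal variable $x_{2k-1,2k}$ is adjacent to both $x_{2k-1,2k-1}$ and $x_{2k,2k}$, so if $D$ contains both of these, there is flexibility in which diagonal partner each off-diagonal variable uses. Because $2t\le m$, the set $D$ has at least $t+1$ elements, so at least two diagonal variables remain unused after the $t-1$ disjoint edges $x_{2k-1,2k}x_{\ast,\ast}$ are chosen. These spare diagonal variables are what should absorb the new variable $x_{a,b}$. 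If the layout above cannot keep enough diagonal indices in $D$, I would instead place the off-diagonal variables in a block of consecutive rows and columns, for instance cyclically inside the rows and columns $\{m-2t+3,\dots,m\}$, so that their diagonal partners sit there and the remaining diagonals are free.

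Verification for each $x_{a,b}\in\mathfrak{p}$, split into cases:
\begin{enumerate}
\item[(i)] $x_{a,a}\mid f$ or $x_{b,b}\mid f$ with $b\le m$, and that diagonal variable is not needed by the $t-1$ edges. Then $x_{a,b}$ together with it gives the $t$-th edge.
\item[(ii)] $x_{a,b}$ shares a row or column with one of the chosen off-diagonal variables. Then re-pair: $x_{a,b}$ is matched with that off-diagonal variable, and the released diagonal partner is no longer needed.
\item[(iii)] $b>m$, which is possible only when $n>m$. Then $x_{a,b}$ is adjacent only to row-$a$ variables, so $x_{a,a}$ must divide $f$. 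One must check that $x_{a,a}$ can always be freed, if necessary by re-routing the off-diagonal edges.
\end{enumerate}

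Main obstacle. The case check above is where I expect the difficulty: choosing $D$ and the positions of the off-diagonal variables so that every $x_{a,b}$ with $a\ne b$ is covered, including rows and columns that $f$ misses entirely. The count $|D|=m-t+1\ge t+1$, which comes from $t\le m/2$, is exactly what guarantees spare diagonal variables, and I expect this to be where the hypothesis $t\le m/2$ is used. If the simple layout fails in some boundary configuration, I would adjust $D$ (which has size $m-t+1$ while only $m$ diagonals exist) so that every row index is either in $D$ or is the row of an off-diagonal factor. Once every $x_{a,b}\in\mathfrak{p}$ lies in $(I^t:f)$, Lemma \ref{colon} gives $(I^t:f)=\mathfrak{p}$, hence $\mathrm{v}(I^t)\le m$ and therefore equality.
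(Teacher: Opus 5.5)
Your lower bound is correct. The upper-bound construction, however, cannot work in most of the range of the theorem. The obstruction comes from choosing a minimal prime of $I$, not from the layout of $f$.

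Every minimal generator $x_{i,j}x_{i,k}$ or $x_{p,r}x_{q,r}$ of $I$ has at most one diagonal variable. So it has at least one variable in $\mathfrak p=(x_{i,j}\mid i\neq j)$. Suppose $\deg_{\mathfrak p}(f)=t-1$ and $x_{a,b}f=hu_1\cdots u_t$ with $x_{a,b}\in\mathfrak p$. Then $\deg_{\mathfrak p}(x_{a,b}f)=t$ forces every $u_k$ to consist of exactly one off-diagonal variable and one diagonal variable. Two consequences follow:
\begin{enumerate}
\item Your re-pairing in case (ii) is never available. A generator made of two off-diagonal variables leaves only $t-2$ of them for the remaining $t-1$ generators.
\item $x_{a,b}$ must be paired with a diagonal variable dividing $f$ in its row or column, namely $x_{a,a}$, or $x_{b,b}$ when $b\le m$.
\end{enumerate}
But $f$ has only $m-t+1\le m-1$ diagonal factors. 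This breaks the argument in two situations.
\begin{enumerate}
\item If $n>m$, choose $a$ with $x_{a,a}\nmid f$. Then $x_{a,m+1}\in\mathfrak p$ but $x_{a,m+1}\notin(I^t:f)$. So your case (iii) fails for every choice of $D$.
\item If $n=m$ and $t\ge 3$, there are indices $a\neq b$ with $x_{a,a}\nmid f$ and $x_{b,b}\nmid f$. Then $x_{a,b}\notin(I^t:f)$.
\end{enumerate}
Changing the layout or the facet does not help. Any $f$ with $(I^t:f)=\mathfrak q_F$ for a facet $F$ must have $\deg_{\mathfrak q_F}(f)=t-1$ (localize at $\mathfrak q_F$). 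Also, column permutations carry any facet to the diagonal. So your strategy can only succeed when $n=m$ and $t=2$; there $f=x_{1,2}\prod_{i=1}^{m-1}x_{i,i}$ does work. In particular, the hypothesis $t\le m/2$ is not about having spare diagonal variables.

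The missing idea is to use an \emph{embedded} associated prime of $I^t$, which is not in $\Ass(I)$. The paper takes
$f=\bigl(\prod_{i=1}^{2t-1}x_{i,1}\bigr)\bigl(\prod_{j=2t}^{m}x_{j,j-2t+2}\bigr)$. It lets $\mathfrak p$ be generated by all variables except the $m-2t+1$ variables $x_{k,k-2t+2}$ with $k\in[2t,m]$. These form a partial rook configuration, not a facet. Here $\deg_{\mathfrak p}(f)=2t-1$, so Lemma \ref{colon} does not apply and both inclusions are checked by hand.
\begin{enumerate}
\item $\mathfrak p\subseteq(I^t:f)$ comes from explicit factorizations. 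The new variable is paired with a partner in column one or in its own row, and the remaining column-one variables are matched among themselves.
\item $(I^t:f)\subseteq\mathfrak p$ is shown directly. Let $g$ be any monomial in the variables $x_{k,k-2t+2}$. Each such variable has no other element of $\supp(gf)$ in its row or column, so it cannot occur in any generator of a factorization of $gf$. Hence $gf\in I^t$ would give $\prod_{i=1}^{2t-1}x_{i,1}\in I^t$, which is impossible because this monomial has degree $2t-1<2t$.
\end{enumerate}
The real role of $t\le m/2$ is to make $m-2t+1\ge 1$, so that $\deg f=m$ matches the lower bound.
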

			\begin{proof}
			Let $I = I_{\Delta_{m,n}}$. Then $\operatorname{v}(I^t) \ge m$ by Lemma 3.2. To prove the equality holds, it suffices to show that $\operatorname{v}(I^t) \le m$.

			Choose $f = (\prod\limits_{i=1}^{2t-1} x_{i,1})(\prod\limits_{j=2t}^{m} x_{j, j-2t+2})$, then, for  any $\ell\in [2t-1]$, we have
			\[
			x_{\ell,1}f=
			\begin{cases}
				(x_{\ell,1}x_{2t-1,1})(\prod\limits_{i=1}^{t-1} x_{2i-1,1}x_{2i,1})(\prod\limits_{j=2t}^{m} x_{j, j-2t+2}),  &\text{if $\ell=1$,}\\
				(x_{1,1}x_{\ell,1})(\prod\limits_{i=1}^{t-1} x_{2i,1}x_{2i+1,1})(\prod\limits_{j=2t}^{m} x_{j, j-2t+2}), &\text{if $\ell\in[2,2t-1]$.}
			\end{cases}
			\]
			Therefore, $x_{\ell,1}f\in I^t$, which implies $x_{\ell,1}\in (I^t:f)$. For any $r\in [2t-1]$ and $s\in[2,n]$, we have
			\[
			x_{r,s}f=
			\begin{cases}
				u(\prod\limits_{i=1}^{\frac{r-1}{2}}x_{2i-1,1}x_{2i,1})(\prod\limits_{j=\frac{r+1}{2}}^{t-1} x_{2j,1}x_{2j+1,1}),  &\text{if $r$ is odd,}\\
				u(x_{r-1,1}x_{r+1,1})(\prod\limits_{i=1}^{\frac{r-2}{2}}x_{2i-1,1}x_{2i,1})(\prod\limits_{j=\frac{r+2}{2}}^{t-1} x_{2j,1}x_{2j+1,1}), &\text{if $r$ is even,}
			\end{cases}
			\]
			where $u=(x_{r,1}x_{r,s})(\prod\limits_{i=2t}^m x_{i, i-2t+2})$.
			Thus, $x_{r,s}f\in I^t$. It follows that $x_{r,s}\in (I^t:f)$. For any $\alpha\in [2t,m]$ and $\beta\in[n]\setminus\{\alpha-2t+2\}$, we  also have $x_{\alpha,\beta}\in (I^t:f)$, since
			\[
			x_{\alpha,\beta}f=x_{2t-1,1}(\prod\limits_{i\in[2t,m]\setminus\{\alpha\}} x_{i, i-2t+2})(x_{\alpha,\beta}x_{\alpha,\alpha-2t+2})(\prod\limits_{j=1}^{t-1} x_{2j-1,1}x_{2j,1})\in I^t.
			\]
			Thus, $\mathfrak{p}\subseteq (I^t : f)$, where  $\mathfrak{p} = (x_{\alpha,\beta} \mid x_{\alpha,\beta}\in V(\Delta_{m,n})\setminus\{x_{k, k-2t+2}\mid k\in[2t,m]\})$.

			Next, we will prove that the inverse inclusion relation holds.
			It suffices to show that $gf\notin I^t$ for any   $g\notin \mathfrak{p}$.  Suppose for a contradiction that there exists some monomial $g\notin \mathfrak{p}$ such that $gf\in I^t$.  We can write $g=\prod\limits_{i=2t}^m x_{i, i-2t+2}^{s_i}$,  thus
			\begin{align*}
			gf = \left(\prod_{i=2t}^m x_{i,i-2t+2}^{s_i+1}\right) \left(\prod_{j=1}^{2t-1} x_{j,1}\right) = h u_1 \cdots u_t,
				\tag{3}\label{eq:my_equation3}
			\end{align*}
			where $u_1,\ldots,u_t\in \mathcal{G}(I)$ and $h$ is a monomial.
			
			From the structure of the chessboard, we can see that, for each $i\in[2t,m]$,
			$\supp(gf)\cap M_i=\emptyset$, where $M_i=\{x_{i,\ell}\mid \ell\in[n]\setminus\{i-2t+2\}\}\cup\{x_{\ell,i-2t+2}\mid \ell\in[m]\setminus\{i\}\}$.
			From the expression (\ref{eq:my_equation3}) of $gf$, we know that
			$x_{i, i-2t+2}\nmid u_k$ for all $k\in[t]$,  implying  $x_{i,i-2t+2}^{s_i+1} \mid h$.
 Consequently, $\prod\limits_{j=1}^{2t-1} x_{j,1}=\left(h/(\prod\limits_{i=2t}^m x_{i, i-2t+2}^{s_i+1})\right)u_1\cdots u_t\in I^t$, which is a contradiction.
			
	Therefore, $(I^t : f)=\mathfrak{p}$, which means that $\operatorname{v}(I^t) \le \operatorname{deg}(f) = m$.
			\end{proof}
	\begin{Theorem}
	Let $ t\ge \frac{m+1}{2}$ be an integer and  $\Delta_{m,n}$   a chessboard complex. Then
	\[
	\mathrm{v}(I_{\Delta_{m,n}}^t)=2t-1 .
	\]
\end{Theorem}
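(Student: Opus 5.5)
The plan is to match the lower bound of Lemma \ref{lowbound1} with an explicit witness of degree $2t-1$.

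\emph{Lower bound.} Since $t\ge \frac{m+1}{2}$, we have $2t-1\ge m$. Lemma \ref{lowbound1} then gives $\mathrm{v}(I_{\Delta_{m,n}}^t)\ge \max\{2t-1,m\}=2t-1$. So it remains to exhibit a monomial $f$ of degree $2t-1$ and a prime $\mathfrak p\in\Ass(I^t)$ with $(I^t:f)=\mathfrak p$, where $I=I_{\Delta_{m,n}}$.

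\emph{Upper bound for $m\ge 3$.} I would take $\mathfrak p=\mathfrak m$, the maximal ideal, and let $f$ be a monomial supported on the first column:
\[
f=\prod_{i=1}^m x_{i,1}^{c_i},\qquad c_i\ge 1,\quad \sum_{i=1}^m c_i=2t-1,\quad \max_i c_i\le t-1 .
\]
Such exponents exist exactly when $m\le 2t-1$ (our hypothesis) and $(2t-1)/m\le t-1$, which holds for $m\ge 3$ and $t\ge 2$. The combinatorial tool is the following elementary fact. Among column-one variables, the generators of $I$ are exactly the products $x_{p,1}x_{q,1}$ with $p\ne q$. A multiset of rows of size $2k$ can be split into $k$ pairs of distinct rows if and only if no row occurs more than $k$ times.

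The verification then runs as follows.
\begin{enumerate}
\item $f\notin I^t$, since $\deg f=2t-1<2t$.
\item For $x_{a,1}$: the multiset for $x_{a,1}f$ has size $2t$ and maximal multiplicity at most $c_a+1\le t$. Hence it splits into $t$ distinct pairs, and $x_{a,1}f\in I^t$.
\item For $x_{a,b}$ with $b\ne 1$: pair $x_{a,b}$ with one copy of $x_{a,1}$, using $c_a\ge1$, to get a generator $x_{a,b}x_{a,1}$. What remains has size $2t-2$ and all multiplicities at most $t-1$, so it lies in $I^{t-1}$.
\end{enumerate}
Thus $\mathfrak m\subseteq (I^t:f)\ne S$, so $(I^t:f)=\mathfrak m$. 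This shows $\mathfrak m\in\Ass(I^t)$ and $\mathrm{v}(I^t)\le 2t-1$.

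\emph{The case $m=2$.} This is the main obstacle, because one column has only two rows and the multiplicity condition fails. If $n\ge 3$, I would transpose the construction and use a single row: take $f=\prod_{j=1}^n x_{1,j}^{c_j}$ with $c_j\ge 1$, $\sum_j c_j=2t-1$ and $\max_j c_j\le t-1$. This is possible when $n\le 2t-1$. If instead $n>2t-1$, I would let only $2t-1$ of the columns appear with exponent $1$. Then for $x_{a,b}$ with $b$ a column absent from $f$, I would pair it with $x_{a,j}$ for $a=1$, or with $x_{1,b}$ if that variable were present. This needs care, and I would adjust $f$ to use both rows, for instance $f=x_{1,1}\cdots$ arranged so that every variable shares a row or column with a variable of $f$ whose removal leaves a pairable remainder.

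For $m=n=2$, the ideal is the edge ideal of a $4$-cycle $x_{1,1}-x_{1,2}-x_{2,2}-x_{2,1}-x_{1,1}$. I would check directly that $f=x_{1,1}^{t-1}x_{1,2}^{t-1}x_{2,2}$ satisfies $(I^t:f)=\mathfrak m$, by a pairing argument along the cycle. Alternatively, one can invoke known results for bipartite edge ideals.
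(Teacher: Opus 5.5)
Your lower bound is correct, and so is your argument for $m\ge 3$. It is the paper's argument: the paper also targets $\mathfrak m$ with a monomial supported on column $1$, and its witnesses $f_2,f_3$ are particular choices of your exponents $c_i$. Your transposed row construction for $m=2$, $3\le n\le 2t-1$ also works. There is a genuine gap in the remaining cases of $m=2$, namely $n=2$ and $n\ge 2t$.

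\emph{The case $m=2$, $n\ge 2t$.} You give no argument, and the sketched $f=x_{1,1}\cdots x_{1,2t-1}$ fails. For $b\ge 2t$, the variable $x_{2,b}$ shares no row or column with $\supp(f)$, so no generator involving $x_{2,b}$ divides $x_{2,b}f$. Hence $x_{2,b}f\in I^t$ would force $f\in I^t$, which is impossible by degree.

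This is not just a missing detail: aiming at $\mathfrak m$ cannot work in general. Take $t=2$ and $n\ge 4$. A cubic $f$ misses some column $b$. Then $x_{1,b}f,x_{2,b}f\in I^2$ force $f=x_{1,j}x_{2,j}x_{2,k}$ or $f=x_{1,j}x_{1,k}x_{2,k}$ with $j\ne k$. In the first case $x_{1,j}f\notin I^2$, and in the second $x_{2,k}f\notin I^2$.

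\emph{The case $m=n=2$.} Your claim is false. Here $I$ is the edge ideal of the bipartite $4$-cycle, so $\Ass(I^t)=\Min(I)$ and $\mathfrak m\notin\Ass(I^t)$. Concretely, $x_{1,1}f=x_{1,1}^{t}x_{1,2}^{t-1}x_{2,2}\notin I^t$, because the neighbours of $x_{1,1}$ occur only $t-1$ times. Your $f$ does satisfy $(I^t:f)=(x_{1,2},x_{2,1})$ by Lemma \ref{colon}, but that is a different argument from the one you gave.

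\emph{The missing idea.} For $m=2$, aim at a \emph{minimal} prime instead of $\mathfrak m$, and let Lemma \ref{colon} control the colon. The paper takes $\mathfrak p=(x_{i,j}\mid i\ne j)=\mathfrak q_F$ with $F=\{x_{1,1},x_{2,2}\}$, and $f_1=x_{2,2}(x_{1,1}x_{2,1})^{t-1}$. Then:
\begin{enumerate}
\item $x_{1,\ell}f_1=(x_{1,1}x_{1,\ell})(x_{2,1}x_{2,2})(x_{1,1}x_{2,1})^{t-2}\in I^t$ for $\ell\ge 2$;
\item $x_{2,q}f_1=(x_{2,2}x_{2,q})(x_{1,1}x_{2,1})^{t-1}\in I^t$ for $q\ne 2$;
\item $\deg_{\mathfrak p}(f_1)=t-1$.
\end{enumerate}
So $\mathfrak p\subseteq(I^t:f_1)$, and Lemma \ref{colon} gives $(I^t:f_1)=\mathfrak p$ uniformly for all $n\ge 2$. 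This covers both missing subcases.
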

\begin{proof}	
			Let $I = I_{\Delta_{m,n}}$. Then  $\mathrm{v}(I^t)\ge 2t-1$  by Lemma \ref{lowbound1}. We will prove $\mathrm{v}(I^t)\le 2t-1$.
		We distinguish into the following  three cases:
		
		(1) If $m=2$, then by Theorem \ref{vstanley}, $\mathfrak{p}=(x_{i,j}\mid i\in[m],j\in[n],i\neq j) \in \Ass(I)$. 
		We choose  $f_1=x_{2,2}(x_{1,1}x_{2,1})^{t-1}$. Then, from the choice of $f_1$ and $\mathfrak{p}$,  $\supp(\mathfrak{p})\cap\supp(f_1)=\{x_{2,1}\}$. Therefore, 
		$\deg_\mathfrak{p}(f_1)=\deg_{x_{2,1}}(f_1)=t-1$. On the other hand, for any $\ell\in[2,n]$, $x_{1,\ell}f_1=(x_{1,1}x_{1,\ell})(x_{2,1}x_{2,2})(x_{1,1}x_{2,1})^{t-2}\in I^t$,   so $x_{1,\ell}\in(I^t:f_1)$. At the same time,  for any $q\in[n]\setminus\{2\}$, $x_{2,q}f_1=(x_{2,2}x_{2,q})(x_{1,1}x_{2,1})^{t-1}\in I^t$, hence
		$x_{2,q}\in(I^t:f_1)$. Therefore, $\mathfrak{p}\subseteq (I^t:f_1)$. By Lemma \ref{colon},   $\mathfrak{p}= (I^t:f_1)$.
		By the definition of $\mathrm{v}$-number, $\mathrm{v}(I^t)\le \operatorname{deg}(f_1) = 2t-1$.
		
			(2) If  $m\ge 4$ and it is even,  then we choose  $f_2=x_{m,1}^2(x_{1,1}x_{2,1})^{t-\frac{m}{2}-1}(\prod\limits_{i=1}^{m-1}x_{i,1})$. Thus, for any $\ell\in [m]$, we have
		\[
	x_{\ell,1}f_2=
	\begin{cases}
		(x_{1,1}x_{2,1})^{t-\frac{m}{2}-1}(x_{\ell,1}x_{m,1})(\prod\limits_{i=1}^{\frac{m}{2}} x_{2i-1,1}x_{2i,1}),  &\text{if $\ell\in[m-1]$,}\\
		(x_{1,1}x_{2,1})^{t-\frac{m}{2}-1}(\prod\limits_{i=1}^{\frac{m}{2}-2} x_{2i-1,1}x_{2i,1})\left(\prod\limits_{j=m-3}^{m-1} (x_{j,1}x_{m,1})\right),  &\text{if $\ell=m$.}
	\end{cases}
	\]
		Thus, $x_{\ell,1}f_2\in I^t$, which implies $x_{\ell,1}\in (I^t:f_2)$.
		Furthermore, for any $r\in [m]$ and $s\in[2,n]$, we have
	\[
	x_{r,s}f_2=
	\begin{cases}w(x_{r-1,1}x_{m,1})(\prod\limits_{i\in[\frac{m}{2}]\setminus\{\frac{r}{2}\}} x_{2i-1,1}x_{2i,1}), &\text{if $r$ is even,}\\
		w(x_{r+1,1}x_{m,1})(\prod\limits_{i\in[\frac{m}{2}]\setminus\{\frac{r+1}{2}\}} x_{2i-1,1}x_{2i,1}),  &\text{if $r< m-1$ is odd,}\\
		w(\prod\limits_{i=2}^{\frac{m}{2}-1} x_{2i-1,1}x_{2i,1})\left(\prod\limits_{j=1}^{2}(x_{j,1}x_{m,1})\right),  &\text{if $r=m-1$.}
	\end{cases}
	\]
		where $w=(x_{r,1}x_{r,s})(x_{1,1}x_{2,1})^{t-\frac{m}{2}-1}$.  Thus $x_{r,s}f_2\in I^t$,  i.e.,  $x_{r,s}\in (I^t:f_2)$.
		Therefore, $\mathfrak{m}\subseteq (I^t:f_2)$, where $\mathfrak{m}=(x_{i,j}\mid i\in[m],j\in[n])$ is the unique maximal homogeneous ideal of $S$.  Thus  $(I^t:f_2)=\mathfrak{m}$, since it is trivial that $(I^t:f_2)\subseteq\mathfrak{m}$. By the definition of $\mathrm{v}$-number, $\mathrm{v}(I^t)\le \operatorname{deg}(f_2) = 2t-1$.
		
		(3) If $m$ is odd, then we choose  $f_3=(x_{1,1}x_{2,1})^{t-\frac{m+1}{2}}(\prod\limits_{i=1}^{m}x_{i,1})$. Thus, for any $\gamma\in [m]$, we have
	\[
	x_{\gamma,1}f_3=
	\begin{cases}
		(x_{1,1}x_{2,1})^{t-\frac{m+1}{2}}(x_{\gamma,1}x_{m,1})(\prod\limits_{i=1}^{\frac{m-1}{2}} x_{2i-1,1}x_{2i,1}),  &\text{if $\gamma\in[m-1]$,}\\
		(x_{1,1}x_{2,1})^{t-\frac{m+1}{2}}(\prod\limits_{i=1}^{\frac{m-3}{2}} x_{2i-1,1}x_{2i,1})\left(\prod\limits_{j=m-2}^{m-1} (x_{j,1}x_{m,1})\right),  &\text{if $\gamma=m$.}
	\end{cases}
	\]
		Hence, $x_{\gamma,1}f_3\in I^t$, which implies $x_{\gamma,1}\in (I^t:f_3)$. At the same time,  for any $\alpha\in [m]$,  $\beta\in[2,n]$, we have
		\[
		x_{\alpha,\beta}f_3=
		\begin{cases}
			z(\prod\limits_{i=1}^{\frac{\alpha-1}{2}}x_{2i-1,1}x_{2i,1})(\prod\limits_{j=\frac{\alpha+1}{2}}^{\frac{m-1}{2}} x_{2j,1}x_{2j+1,1}),  &\text{if $\alpha$ is odd,}\\
			z(x_{\alpha-1,1}x_{\alpha+1,1})(\prod\limits_{i=1}^{\frac{\alpha}{2}-1}x_{2i-1,1}x_{2i,1})(\prod\limits_{j=\frac{\alpha}{2}+1}^{\frac{m-1}{2}} x_{2j,1}x_{2j+1,1}), &\text{if $\alpha$ is even,}
		\end{cases}
		\]
		where $z=(x_{\alpha,1}x_{\alpha,\beta})(x_{1,1}x_{2,1})^{t-\frac{m+1}{2}}$. Thus, $x_{\alpha,\beta}f_3\in I^t$, implying  $x_{\alpha,\beta}\in (I^t:f_3)$.
		Therefore, $\mathfrak{m}\subseteq (I^t:f_3)$.   Thus $(I^t:f_3)=\mathfrak{m}$, since it is clear that $(I^t:f_3)\subseteq\mathfrak{m}$.
		Again by the definition of $\mathrm{v}$-number, $\mathrm{v}(I^t)\le \operatorname{deg}(f_3) = 2t-1$.
			\end{proof}

	\subsection{Symbolic powers}
	
	In the following, we will consider the $\mathrm{v}$-numbers of symbolic powers  of  the Stanley-Reisner ideal $I_{\Delta_{m,n}}$.  
	Let $t\ge 2$ be an integer.
	
	By \cite[Theorem 1.7]{MS},  we know that 
	\[
	I_{\Delta_{m,n}}=\bigcap\limits_{F \in \mathrm{Facets}(\Delta_{m,n})} \mathfrak{q}_F,
	\]
	where $\mathfrak{q}_F=(x_{i,j}\mid x_{i,j}\in V(\Delta_{m,n})\setminus F)$. Thus the $t$-th symbolic power of
	$I_{\Delta_{m,n}}$ is 
	\[
	I_{\Delta_{m,n}}^{(t)}=\bigcap\limits_{F \in \mathrm{Facets}(\Delta_{m,n})} \mathfrak{q}_F^t. 
	\]
	
\begin{Lemma}\label{symboliclowbound1}
	Let $t\ge 2$ be an integer and $\Delta_{m,n}$ a chessboard complex with $n \geq m \geq 2$. 
	Then 
	\[
	\mathrm{v}(I_{\Delta_{m,n}}^{(t)})\ge t-1+
	\max\left\{d_{m,n},
	\left\lceil\frac{t-1+c_{m,n}}{n-1}\right\rceil\right\}, 
	\]
	where  $c_{m,n}=m(n-m)+\binom{m}{2}$ and 
	$d_{m,n}=
	\begin{cases}
		m-1, &\text{if $m=n$,}\\
		m, &\text{if $n>m$.}
	\end{cases}$
\end{Lemma}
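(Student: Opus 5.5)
We first describe the associated primes and then use Lemma \ref{symboliccolon}. Since $I=I_{\Delta_{m,n}}$ is squarefree with primary decomposition $I=\bigcap_F\mathfrak q_F$ over facets $F$, we have $\Ass(I^{(t)})=\Ass(I)=\{\mathfrak q_F\}$. Every facet of $\Delta_{m,n}$ is a rook placement of size $m$, i.e. an injective map $\sigma:[m]\to[n]$ with $F_\sigma=\{x_{i,\sigma(i)}\}$. Let $f$ be a monomial of degree $\mathrm{v}(I^{(t)})$ with $(I^{(t)}:f)=\mathfrak q_{F}$ for some facet $F=F_\sigma$. By Lemma \ref{symboliccolon},
\[
\deg_{\mathfrak q_F}(f)=t-1 \quad\text{and}\quad \deg_{\mathfrak q_G}(f)\ge t \ \text{for every facet } G\ne F .
\]
Write $a_i=\deg_{x_{i,\sigma(i)}}(f)$ and $A=\sum_i a_i$. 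Then $\deg f=t-1+A$, so it suffices to show $A\ge d_{m,n}$ and $A\ge \lceil (t-1+c_{m,n})/(n-1)\rceil$.

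\textbf{The bound $A\ge d_{m,n}$.} Take a facet $G$ obtained from $F$ by a small modification. For $n>m$, replace $x_{i,\sigma(i)}$ by $x_{i,j}$ with $j$ outside the image of $\sigma$. Then $\mathfrak q_G=\mathfrak q_F\setminus\{x_{i,j}\}\cup\{x_{i,\sigma(i)}\}$, so $\deg_{\mathfrak q_G}f=t-1-\deg_{x_{i,j}}f+a_i\ge t$. This forces $a_i\ge 1+\deg_{x_{i,j}}f\ge1$ for every $i$, hence $A\ge m$. For $m=n$, use transpositions: swapping rows $i,k$ gives $G$ with $\deg_{\mathfrak q_G}f=t-1+a_i+a_k-\deg_{x_{i,\sigma(k)}}f-\deg_{x_{k,\sigma(i)}}f\ge t$. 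Hence $a_i+a_k\ge1$ for all $i\ne k$, so at most one $a_i$ vanishes and $A\ge m-1$. (This mirrors the proof of Theorem \ref{vstanley}.)

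\textbf{The bound $A(n-1)\ge t-1+c_{m,n}$.} This is the main obstacle. We plan to sum the inequalities $\deg_{\mathfrak q_G}f\ge t$ over a well-chosen family of facets $G$ adjacent to $F$, namely the $m(n-m)$ single-row replacements $(i,j)$ with $j\notin\sigma([m])$ and the $\binom m2$ transpositions $(i,k)$. Write $b_{i,j}=\deg_{x_{i,j}}f$ for the off-facet entries, so that $\sum b_{i,j}=t-1$. The inequalities read
\[
a_i\ge 1+b_{i,j}, \qquad a_i+a_k\ge 1+b_{i,\sigma(k)}+b_{k,\sigma(i)} .
\]
Summing them all, each $a_i$ appears $(n-m)+(m-1)=n-1$ times. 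Each off-facet variable $x_{i,j}$ appears exactly once: through the replacement if $j\notin\sigma([m])$, and through the transposition with the row $k$ satisfying $\sigma(k)=j$ otherwise. Summing therefore gives
\[
(n-1)A\ \ge\ m(n-m)+\binom m2+\sum b_{i,j}=c_{m,n}+t-1 .
\]
Hence $A\ge\lceil (t-1+c_{m,n})/(n-1)\rceil$ because $A$ is an integer. The key points to check are that the chosen $G$ are genuine facets distinct from $F$ and that the incidence counts are exact.

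Combining the two bounds with $\deg f=t-1+A$ proves the lemma.
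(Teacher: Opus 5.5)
Your proposal is correct and follows essentially the same route as the paper: fix the facet $F$, use Lemma \ref{symboliccolon} to get $\deg f=t-1+\sum_i s_{i,q_i}$, then use the inequalities coming from the single-row replacement facets $F'_{k\ell}$ and the row-transposition facets $F_{ij}$. These inequalities give both the $d_{m,n}$ bound and, after summing, the bound $(n-1)\sum_i s_{i,q_i}\ge t-1+c_{m,n}$. The only cosmetic difference is that the paper sums the two families separately and then adds them, while your single incidence count (each diagonal exponent appears $n-1$ times, each off-facet exponent exactly once) does the same bookkeeping in one step.
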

	\begin{proof}
		Let $I=I_{\Delta_{m,n}}$ and $\Gamma_{m,n,t}=t-1+	\max\left\{d_{m,n},	\left\lceil\frac{t-1+c_{m,n}}{n-1}\right\rceil\right\}$.
	By the definition of $\mathrm{v}$-number, 
	\[
	\mathrm{v}(I^{(t)})=\min\{\mathrm{v}_{\mathfrak{q}_{F}}(I^{(t)})\mid F\in \Facets(\Delta_{m,n})\}. 
	\]
	We will show that for any $F\in\mathrm{Facets}(\Delta_{m,n})$,
	$\mathrm{v}_{\mathfrak{q}_F}(I^{(t)})\ge \Gamma_{m,n,t}$.

		Let $F=\{x_{1,q_1},\ldots,x_{m,q_m}\}$.
	By the definition of  the local $\mathrm{v}$-number, there exists a monomial $f=\prod\limits_{i=1}^{m}\prod\limits_{j=1}^{n}x_{i,j}^{s_{i,j}}$ of degree  $\mathrm{v}_{\mathfrak{q}_{F}}(I^{(t)})$ such that  $(I^{(t)}:f)=\mathfrak{q}_{F}$. 
	By Lemma \ref{symboliccolon},   $\sum\limits_{x_{i,j}\notin F}s_{i,j}=t-1$. Thus
	\[
	\deg(f)=\sum\limits_{x_{i,j}\notin F}s_{i,j}+\sum\limits_{i\in[m]}s_{i,q_i}=t-1+\sum\limits_{i\in[m]}s_{i,q_i}.  
	\]
	To prove $\deg(f)\ge \Gamma_{m,n,t}$, it suffices to show that  $\sum\limits_{i\in[m]}s_{i,q_i}\ge \max\left\{d_{m,n},\left\lceil\frac{t-1+c_{m,n}}{n-1}\right\rceil\right\}$. 
	Now, we  prove that $\sum\limits_{i\in[m]}s_{i,q_i}\ge\left\lceil\frac{t-1+c_{m,n}}{n-1}\right\rceil$.  
	It is enough to prove that 
	\[
	t-1\le (n-1)\sum\limits_{i\in[m]}s_{i,q_i}-c_{m,n}.
	\]
	Note that  $\sum\limits_{x_{i,j}\notin F}s_{i,j}=t-1$ and 
	$\sum\limits_{x_{i,j}\notin F}s_{i,j}=\sum\limits_{1\le i<j\le m}(s_{i,q_j}+s_{j,q_i})+\sum\limits_{i=1}^{m}\sum\limits_{q\in[n]\setminus\{q_1,\ldots,q_m\}}s_{i,q}$.
	It suffices to show
	\begin{align*}
		\sum\limits_{1\le i<j\le m}(s_{i,q_j}+s_{j,q_i})+\sum\limits_{i=1}^{m}\sum\limits_{q\in[n]\setminus\{q_1,\ldots,q_m\}}s_{i,q}\le (n-1)\sum\limits_{i\in[m]}s_{i,q_i}-c_{m,n}.
		\tag{4}\label{eq:my_equation4}
	\end{align*}
	To prove that (\ref{eq:my_equation4}) holds, we only need to show that 
	$\sum\limits_{1\le i<j\le m}(s_{i,q_j}+s_{j,q_i})\le (m-1)(\sum\limits_{i\in[m]}s_{i,q_i})-\binom{m}{2}$ and 
	$\sum\limits_{k=1}^{m}\sum\limits_{q\in[n]\setminus\{q_1,\ldots,q_m\}}s_{k,q}\le (n-m)(\sum\limits_{i\in[m]}s_{i,q_i}-m)$.
	
	\medskip
	First, we prove  $\sum\limits_{1\le i<j\le m}(s_{i,q_j}+s_{j,q_i})\le (m-1)(\sum\limits_{i\in[m]}s_{i,q_i})-\binom{m}{2}$. 
	
	For any $1\le i<j\le m$, we consider a facet  $F_{ij}=(F\setminus\{x_{i,q_i},x_{j,q_j}\})\cup\{x_{i,q_j},x_{j,q_i}\}$ of  $\Delta_{m,n}$ that is distinct from $F$.  Applying Lemma \ref{symboliccolon}  once again, we have $\sum\limits_{x_{u,v}\notin F_{ij}}s_{u,v} \ge t$.
	Thus
	\begin{align*}
		\sum\limits_{x_{u,v}\notin F_{ij}}s_{u,v} &= \sum\limits_{x_{u,v}\notin F}s_{u,v} + s_{i,q_i} + s_{j,q_j} - s_{i,q_j} - s_{j,q_i}\\
		&=(t-1) + s_{i,q_i} + s_{j,q_j} - s_{i,q_j} - s_{j,q_i}\ge t.
	\end{align*}
	Therefore,  $s_{i,q_j}+s_{j,q_i}\le s_{i,q_i}+s_{j,q_j}-1$.
	It follows that
	\[
	\sum\limits_{1\le i<j\le m}(s_{i,q_j}+s_{j,q_i})\le \sum\limits_{1\le i<j\le m}(s_{i,q_i}+s_{j,q_j}-1)=(m-1)(\sum\limits_{i\in[m]}s_{i,q_i})-\binom{m}{2}.
	\] 
	\medskip
	Next, we will show that  $\sum\limits_{k=1}^{m}\sum\limits_{\ell\in[n]\setminus\{q_1,\ldots,q_m\}}s_{k,\ell}\le (n-m)(\sum\limits_{i\in[m]}s_{i,q_i}-m)$.
	
	The case $n=m$ is trivial. Suppose  $n>m$.
	For any $k\in[m]$ and any $\ell\in[n]\setminus\{q_1,\ldots,q_m\}$, we consider a facet  $F_{k\ell}'=(F\setminus\{x_{k,q_k}\})\cup\{x_{k,\ell}\}$ of  $\Delta_{m,n}$ that is distinct from $F$.  Since  $\sum\limits_{x_{i,j}\notin F}s_{i,j}=t-1$ and $\sum\limits_{x_{u,v}\notin F_{k\ell}'}s_{u,v} \ge t$,
	\begin{align*}
		\sum\limits_{x_{u,v}\notin F_{k\ell}'}s_{u,v} = \sum\limits_{x_{u,v}\notin F}s_{u,v} + s_{k,q_k} - s_{k,\ell}=(t-1) + s_{k,q_k} - s_{k,\ell}\ge t.
	\end{align*}
	Thus  $s_{k,\ell}\le s_{k,q_k}-1$.
	Therefore, 
	\[
	\sum\limits_{k=1}^{m}\sum\limits_{\ell\in[n]\setminus\{q_1,\ldots,q_m\}}s_{k,\ell}\le \sum\limits_{k=1}^{m}\sum\limits_{\ell\in[n]\setminus\{q_1,\ldots,q_m\}}(s_{k,q_k}-1) =(n-m)(\sum\limits_{i\in[m]}s_{i,q_i}-m).
	\] 
	\bigskip
	On the other hand, we will prove  that $\sum\limits_{i\in[m]}s_{i,q_i}\ge d_{m,n}$.
	From the proof above, we can see that:  if $1\le i<j\le m$,  then $s_{i,q_i}+s_{j,q_j}\ge 1$, and if $n>m$, then 
	$s_{k,q_k}\ge 1$ for any  $k\in[m]$ and $\ell\in[n]\setminus\{q_1,\ldots,q_m\}$.
	Thus,  if $1\le i<j\le m$,  then 
	there is at most one $i\in[m]$ such that $s_{i,q_i}=0$. Therefore,  $\sum\limits_{i\in[m]}s_{i,q_i}\ge m-1$.  If 
	$n>m$, then $\sum\limits_{i\in[m]}s_{i,q_i}\ge m$. Therefore,  $\sum\limits_{i\in[m]}s_{i,q_i}\ge d_{m,n}$. 
\end{proof}
\begin{Theorem}\label{symboliclpower}
	Let $t\ge 2$ be an integer and $\Delta_{m,n}$ a chessboard complex with $n \geq m \geq 2$. 
	Then 
	\[
	\mathrm{v}(I_{\Delta_{m,n}}^{(t)})= t-1+
	\max\left\{d_{m,n},
	\left\lceil\frac{t-1+c_{m,n}}{n-1}\right\rceil\right\}, 
	\]
	where  $c_{m,n}$ and  $d_{m,n}$ are  defined as  in Lemma \ref{symboliclowbound1}.
\end{Theorem}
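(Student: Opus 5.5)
The lower bound is Lemma \ref{symboliclowbound1}, so it remains to prove the upper bound. For this I would construct one explicit monomial $f$ with $(I^{(t)}:f)=\mathfrak{q}_F$ and $\deg f=\Gamma_{m,n,t}$, and check it against Lemma \ref{symboliccolon}. I would take the diagonal facet $F=\{x_{1,1},\ldots,x_{m,m}\}$ and set $A=\max\{d_{m,n},\lceil\frac{t-1+c_{m,n}}{n-1}\rceil\}$. The aim is to write $f=\prod x_{i,j}^{s_{i,j}}$ with diagonal exponents $a_i=s_{i,i}$ satisfying $\sum a_i=A$, and off-diagonal exponents summing to exactly $t-1$. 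Then $\deg f=t-1+A$ and $\deg_{\mathfrak{q}_F}(f)=t-1$.

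\emph{Reduction to a combinatorial condition.} For another facet $G$ we have $\deg_{\mathfrak{q}_G}(f)=(t-1)+\sum_{x\in F\setminus G}s_x-\sum_{x\in G\setminus F}s_x$. So Lemma \ref{symboliccolon} requires
\[
\sum_{x\in G\setminus F}s_x\le \sum_{x\in F\setminus G}s_x-1 .
\]
The cells of $G\setminus F$ form a partial injection $\sigma$ from rows to columns with $\sigma(i)\ne i$. Every moved row $i$ contributes $a_i$ to the right-hand side, and so does every column $j\le m$ used by $\sigma$.

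\emph{Imposing upper bounds on the off-diagonal exponents.} I would impose
\[
s_{i,j}\le a_i \ \text{ for } j<i,\qquad s_{i,j}\le a_i-1 \ \text{ for } i<j\le m \text{ or } j>m .
\]
Decompose $\sigma$ into cycles and paths. Along a cycle the $s$-sum is at most the sum of the $a_i$ over the cycle minus the number of ascents $\sigma(i)>i$, and a nontrivial cycle has at least one ascent. A path ending in a column $j>m$ loses $1$ at its last row. A path ending in a column $j\le m$ whose row is unmoved gains the extra term $a_j$ on the right, which gives strictness provided $a_j\ge1$ or the path has an ascent. Hence the inequality holds for every $G\ne F$.

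\emph{Capacity check.} Summing the bounds gives total off-diagonal capacity
\[
\sum_i\bigl[(i-1)a_i+(m-i)(a_i-1)+(n-m)(a_i-1)\bigr]=(n-1)A-\tbinom m2-m(n-m)=(n-1)A-c_{m,n}.
\]
This is $\ge t-1$ by the choice of $A$. The bounds are nonnegative as long as $a_i\ge1$, possibly except in the last row. So I can distribute exactly $t-1$ among the off-diagonal cells within the bounds. This is possible because $A\ge d_{m,n}$: if $n>m$, take all $a_i\ge1$; if $m=n$ and $A=m-1$, take $a_1=\dots=a_{m-1}\ge1$ and $a_m=0$.

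\emph{Main obstacle.} The delicate step is the case $a_m=0$ when $m=n$. Here the bounds for row $m$ force $s_{m,j}=0$, since $s_{m,j}\le a_m=0$ for $j<m$ and there are no columns $j>m$. I then need to recheck the path case: a path ending at column $m$ with row $m$ unmoved enters from some row $i<m$, so $\sigma(i)=m>i$ is an ascent and supplies the needed $-1$. Once every $G$ is verified, Lemma \ref{symboliccolon} gives $(I^{(t)}:f)=\mathfrak{q}_F$. Hence $\mathrm{v}(I^{(t)})\le t-1+A$, which combined with Lemma \ref{symboliclowbound1} gives equality.
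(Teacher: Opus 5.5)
Your proposal is correct and follows the paper's construction. The paper also uses the diagonal facet $F_0$ and caps the off-diagonal exponents by $\alpha_i$ on one side of the diagonal and by $\alpha_i-1$ on the other side and in the columns $j>m$; this is the mirror image of your caps, with the specific choice $\alpha_1=A-m+1$ and $\alpha_i=1$ for $i\ge 2$. It gets the same capacity $(n-1)A-c_{m,n}\ge t-1$. The paper reduces the problem to $\sum_i s_{i,\ell_i}\le\sum_i\alpha_i-1$ but never verifies that inequality, and your cycle/path argument supplies exactly that verification.

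One correction. A facet $G$ uses pairwise distinct columns, so $\sigma(i)=j\le m$ forces $\ell_j\ne j$, i.e.\ row $j$ is moved. Hence every path ends in a column $j>m$, and for $m=n$ only cycles occur. The case of a ``path ending at the column of an unmoved row'' is therefore vacuous. The ``extra term $a_j$'' you attach to it would be a double count, since the right-hand side is exactly $\sum_{i\text{ moved}}a_i$. So your ``main obstacle'' is not actually an obstacle.
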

	\begin{proof}
	Let $I=I_{\Delta_{m,n}}$ and  $F_0=\{x_{1,1}, \ldots ,x_{m,m}\}$. Then $\mathfrak{q}_{F_0}=(x_{i,j}\mid x_{i,j}\in V(\Delta_{m,n})\setminus F_0)$. 
	For any $i\in[m]$ and any $j\in[n]$, we define 
	\[	
	\alpha_i=
	\begin{cases}
		\rho_{m,n,t}-m+1,&\text{if $i=1$,}\\
		1,    &\text{if $2\le i\le m$,}
	\end{cases}
	\]
	and 
	\[
	\beta_{i,j}=
	\begin{cases}
		\alpha_i,  &\text{if $1\le i\le j\le m$,}\\
		\alpha_i-1,&\text{if $1\le j<i$  or $m<j\le n$,} 
	\end{cases}
	\]
	where $\rho_{m,n,t}=\max\left\{d_{m,n},\left\lceil\frac{t-1+c_{m,n}}{n-1}\right\rceil\right\}$. 
	Thus 
	\[
	\alpha_1=\rho_{m,n,t}-m+1\ge d_{m,n}-m+1=\begin{cases}
		0,  &\text{if $n=m$,}\\
		1,&\text{if $n>m$,} 
	\end{cases}
	\]
	and
	\begin{align*}
		\sum_{x_{i,j}\notin F_0}\beta_{i,j}
		&=\sum\limits_{i=1}^{m}\big[(n-m)(\alpha_i-1)+(i-1)(\alpha_i-1)+(m-i)\alpha_i\big]\\
		&=(n-1)\rho_{m,n,t}-c_{m,n}\ge t-1.
	\end{align*}
	Choose a  monomial   $f=\prod_{i=1}^{m}\prod_{j=1}^{n}x_{i,j}^{s_{i,j}}$, where $s_{i,j}\ge 0$   such that $s_{i,i}=\alpha_i$, $s_{i,j}\le \beta_{i,j}$ for any $j\ne i$ and $\sum_{x_{i,j}\notin F_0}s_{i,j}=t-1$.  Then $\deg(f)=\sum\limits_{i=1}^{m}\sum\limits_{j=1}^{n}s_{i,j}$.
	We would like to  prove that $(I^{(t)}:f)=\mathfrak{q}_{F_0}$. 
	By Lemma \ref{symboliccolon},  this is equivalent to verifying
	that  	
	\[
	\deg_{\mathfrak{q}_{F_0}}(f) = t - 1 \quad \text{and} \quad \deg_{\mathfrak{q}_F}(f) \ge t
	\]
	for any $\mathfrak{q}_F\in \Ass(I)$ such that $\mathfrak{q}_F \neq \mathfrak{q}_{F_0}$.
	
	It is clear that $\deg_{\mathfrak{q}_{F_0}}(f) =\sum_{x_{i,j}\notin F_0}s_{i,j}=t-1$. For any $\mathfrak{q}_F\in \Ass(I)$ such that $\mathfrak{q}_F \neq \mathfrak{q}_{F_0}$, we will verify $\deg_{\mathfrak{q}_F}(f) \ge t$.
	Let  $F=\{x_{1,\ell_1},\ldots,x_{m,\ell_m}\}$, then 
	\begin{align*}
		\deg_{\mathfrak{q}_F}(f)&=\sum\limits_{x_{i,j}\notin F}s_{i,j}=\deg (f)-\sum_{i=1}^{m}s_{i,\ell_i}=\sum\limits_{i=1}^{m}\sum\limits_{j=1}^{n}s_{i,j}
		-\sum_{i=1}^{m}s_{i,\ell_i}\\
		&=
		\sum\limits_{x_{i,j}\notin F_0}s_{i,j}+\sum_{i=1}^{m}s_{i,i}-\sum_{i=1}^{m}s_{i,\ell_i}=(t-1)+\sum_{i=1}^{m}\alpha_i-\sum_{i=1}^{m}s_{i,\ell_i}.
	\end{align*}
	Therefore,  $\deg_{\mathfrak{q}_F}(f) \ge t$ if and only if $\sum_{i=1}^{m}s_{i,\ell_i}\le\sum_{i=1}^{m}\alpha_i-1$.
	
	Therefore, by the definition of $\mathrm{v}$-number, $\mathrm{v}(I^{(t)})\le \operatorname{deg}(f) =  t-1+\rho_{m,n,t}$.  By Lemma \ref{symboliclowbound1}, we have $\mathrm{v}(I^{(t)})=\operatorname{deg}(f) =  t-1+\rho_{m,n,t}$.
\end{proof}

\section{The $\mathrm{v}$-number of powers of the facet ideals}
\label{sec:facet ideals}

In this section, we  provide  some exact formulas  for $\mathrm{v}$-numbers of  symbolic powers  of  the facet ideal  $\mathcal{F}(\Delta_{m,n})$  of a chessboard complex $\Delta_{m,n}$. We also 
show that the $\mathrm{v}$-number function of the ordinary powers is asymptotically linear.

Recall that the facet ideal of $\Delta_{m,n}$ is
\[
\mathcal{F}(\Delta_{m,n}) = (x_{1,\ell_1} \cdots x_{m,\ell_m} \mid 1 \le \ell_1, \dots, \ell_m \le n \text{ are pairwise distinct}).
\]
If $m = 1$, then $\mathcal{F}(\Delta_{1,n}) = (x_{1,1}, \dots, x_{1,n})$.  By \cite[Corollary 4.4 and Proposition 4.13]{BM}, $\mathrm{v}(\mathcal{F}(\Delta_{1,n})^{(t)}) =\mathrm{v}(\mathcal{F}(\Delta_{1,n})^t) = t - 1$ for all $t \ge 1$. From now on, we assume that $m \ge 2$ and adopt the convention that $\displaystyle \prod_{i=k}^{\ell} \prod_{j=\alpha}^{\beta} x_{i,j} = 1$ if $k > \ell$ or $\alpha > \beta$.  In order to obtain a lower bound on the $\mathrm{v}$-number of  the facet ideal of $\Delta_{m,n}$,  we need to use the two sets and a lemma introduced in \cite{JV}.

\[
\mathcal{M}_{\Delta}=\{A \mid A \text{ is a maximal independent set of } \Delta\}, \text{ and}
\]
\[
\mathcal{A}_{\Delta}=\{A \mid A \text{ is an independent set of } \Delta \text{ and } N_{\Delta}(A) \text{ is a minimal vertex cover of } \Delta\}.
\]

\medskip
In the following, we will consider the $\mathrm{v}$-numbers of symbolic powers  of  the facet ideal  $\mathcal{F}(\Delta_{m,n})$.  
We need the following lemma.

\begin{Lemma}{\em (\cite[Theorem 3.5]{JV})}\label{edgeideal}
	Let $I$ be the facet ideal of a simplicial complex $\Delta$. If $I$ is not prime, then $\mathcal{M}_{\Delta}\subseteq \mathcal{A}_{\Delta}$ and
	\[
	\mathrm{v}(I) = \min\{|A| : A \in \mathcal{A}_{\Delta}\}.
	\]
\end{Lemma}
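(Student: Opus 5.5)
The plan is to reduce the computation of $\mathrm{v}(I)$ to squarefree monomials $\mathbf{x}_A$, to identify exactly when $(I:\mathbf{x}_A)$ is an associated prime, and then to treat maximal independent sets separately. Since $I=\mathcal{F}(\Delta)$ is squarefree, $\Ass(I)=\Min(I)$. These primes are exactly the ideals $(C)$ with $C$ a minimal vertex cover of $\Delta$.

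\emph{Step 1 (reduction to squarefree monomials).} Let $f\in S_d$ with $(I:f)=\mathfrak{p}\in\Ass(I)$. Discarding the terms of $f$ lying in $I$, write $f=\sum_u c_u u$ with $c_u\neq 0$ and the distinct monomials $u$ not in $I$. Because $I$ is monomial, a monomial $g$ lies in $(I:f)$ if and only if $g\in (I:u)$ for every $u$, as the monomials $gu$ are distinct.
\begin{itemize}
\item Since $\mathfrak{p}$ is generated by variables, this gives $\mathfrak{p}\subseteq (I:u)$ for all $u$.
\item The monomial ideal $\bigcap_u (I:u)$ has all its monomials in $(I:f)=\mathfrak{p}$, so $\bigcap_u (I:u)=\mathfrak{p}$.
\item Since $\prod_u (I:u)\subseteq\mathfrak{p}$ and $\mathfrak{p}$ is prime, some $(I:u)\subseteq\mathfrak{p}$, hence $(I:u)=\mathfrak{p}$ with $\deg u=d$.
\end{itemize}
Finally, because every generator of $I$ is squarefree, a generator divides $gu$ if and only if it divides $g\cdot\mathbf{x}_{\supp(u)}$. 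Thus $(I:u)=(I:\mathbf{x}_A)$ with $A=\supp(u)$ and $|A|\le d$. So $\mathrm{v}(I)$ is the minimum of $|A|$ over sets $A$ with $(I:\mathbf{x}_A)\in\Ass(I)$.

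\emph{Step 2 (characterizing such $A$).} The condition $\mathbf{x}_A\notin I$ means exactly that $A$ is independent. Since $A$ is independent, $N_\Delta(A)\cap A=\emptyset$, so a variable $x_v$ lies in $(I:\mathbf{x}_A)$ if and only if $v\in N_\Delta(A)$. Hence, if $(I:\mathbf{x}_A)$ is a prime generated by variables, it equals $(N_\Delta(A))$. Being a minimal prime of $I$, this forces $N_\Delta(A)$ to be a minimal vertex cover, i.e. $A\in\mathcal{A}_\Delta$.

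Conversely, let $A\in\mathcal{A}_\Delta$; clearly $(N_\Delta(A))\subseteq(I:\mathbf{x}_A)$. Every minimal monomial generator of $(I:\mathbf{x}_A)$ has the form $\mathbf{x}_B$ with $B\cap A=\emptyset$ and some facet $F\subseteq A\cup B$. Since $N_\Delta(A)$ is a vertex cover, pick $w\in F\cap N_\Delta(A)$. Then $w\notin A$ because $N_\Delta(A)\cap A=\emptyset$, so $w\in B$ and $\mathbf{x}_B\in(N_\Delta(A))$. This gives equality, and combined with Step 1 yields $\mathrm{v}(I)=\min\{|A|:A\in\mathcal{A}_\Delta\}$.

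\emph{Step 3 ($\mathcal{M}_\Delta\subseteq\mathcal{A}_\Delta$).} Let $A$ be maximal independent. By maximality, each $v\notin A$ makes $A\cup\{v\}$ contain a facet, so $N_\Delta(A)=V(\Delta)\setminus A$. This is a vertex cover because no facet lies in $A$. For minimality, removing $v\in N_\Delta(A)$ leaves uncovered any facet $F\subseteq A\cup\{v\}$ containing $v$.

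The hypothesis that $I$ is not prime serves to exclude degenerate cases. For example, it excludes the situation where every variable of a single facet is a cover by itself, or $I=0$, where $\Ass(I)$ or the correspondence above degenerates. I expect the main obstacle to be Step 1, the passage from an arbitrary homogeneous $f$ to a monomial, and specifically the prime-avoidance argument that isolates a single term $u$ with $(I:u)=\mathfrak{p}$; the remaining steps are direct combinatorial checks.
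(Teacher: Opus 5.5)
Your proof is correct, and it is self-contained, whereas the paper offers no argument of its own. The lemma is simply quoted from \cite[Theorem 3.5]{JV}. The one further ingredient the paper takes from that source is the identity $(\mathcal{F}(\Delta):\mathbf{x}_A)=(N_\Delta(A))$ for $A\in\mathcal{A}_\Delta$, cited as \cite[Lemma 3.4(a)]{JV} in the proof of Lemma \ref{lowbound2}. That identity is exactly the converse half of your Step 2.

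Your proposal therefore replaces a citation by a direct proof, and its only non-routine part is Step 1. There you discard the terms of $f$ lying in $I$, observe that a monomial lies in $(I:f)$ if and only if it lies in every $(I:u)$, and deduce $\bigcap_u(I:u)=\mathfrak{p}$. You then use $\prod_u(I:u)\subseteq\mathfrak{p}$ to isolate a single term with $(I:u)=\mathfrak{p}$, and replace $u$ by $\mathbf{x}_{\supp(u)}$. Each of these steps is sound, so the minimum defining $\mathrm{v}(I)$ may be taken over squarefree monomials $\mathbf{x}_A$. Steps 2 and 3 are then routine, and you carry them out correctly.

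One correction to your closing paragraph: your argument never uses the hypothesis that $I$ is not prime, and the role you assign to it is not right. A complex with a single facet $F$, $|F|\ge 2$, gives the non-prime ideal $(\mathbf{x}_F)$. So this case is not excluded, and your formula correctly returns $\mathrm{v}(I)=|F|-1$, attained by $A=F\setminus\{w\}$.

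In fact, with the paper's definitions, $\emptyset$ is an independent set whenever $I\neq S$. Your Step 2 then shows that $\emptyset\in\mathcal{A}_\Delta$ if and only if $(I:1)=I$ is prime. So for prime $I$ the statement still holds, with both sides equal to $0$. What the hypothesis actually guarantees is $\emptyset\notin\mathcal{A}_\Delta$, i.e.\ that the minimizing independent sets are nonempty. That only matters under a convention requiring independent sets to be nonempty.
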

	
	\medskip
	\subsection{Symbolic powers}
		
		\begin{Lemma}\label{lowbound2}
			Let $\Delta_{m,n}$ be a chessboard complex with $n \geq m \geq 2$. Then
			\[
			\mathrm{v}(\mathcal{F}(\Delta_{m,n}))\ge 2m-2.
			\]
		\end{Lemma}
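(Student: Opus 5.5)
We plan to use Lemma \ref{edgeideal}. Since $m\ge 2$, the ideal $\mathcal{F}(\Delta_{m,n})$ is generated by products of $m\ge 2$ variables, so it is not prime. Therefore $\mathrm{v}(\mathcal{F}(\Delta_{m,n}))=\min\{|A| : A\in\mathcal{A}_{\Delta_{m,n}}\}$, and it suffices to show that every independent set $A$ whose neighbor set $N_{\Delta_{m,n}}(A)$ is a minimal vertex cover satisfies $|A|\ge 2m-2$.

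Fix such an $A$ and write $N=N_{\Delta_{m,n}}(A)$.

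\textbf{Step 1: $N$ is nonempty and $A$ contains an almost-facet.} Since $N$ is a vertex cover, it is nonempty. Pick $x_{a,b}\in N$. By definition $A\cup\{x_{a,b}\}$ contains a facet, while $A$ itself contains none. Hence $A$ contains a partial rook placement $G$ of size $m-1$ that uses every row except row $a$ and avoids column $b$.

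\textbf{Step 2: $N$ cannot be confined to one row.} Next we show that $N$ contains a vertex $x_{c,d}$ with $c\ne a$. Suppose instead that $N\subseteq\{x_{a,j}: j\in[n]\}$. A set contained in row $a$ is a vertex cover only if it contains $x_{a,j}$ for every column $j$, because the facets through $x_{a,j}$ avoid every other vertex of row $a$. So $N$ would be the whole row $a$. Each $x_{a,j}\in N$ requires $A$ to contain $m-1$ rooks in the other rows avoiding column $j$. Bounding how many of these can coincide should push $|A|$ up. Alternatively, if $N$ is all of row $a$ then, since $n\ge m\ge 2$, one can check that $N$ is a minimal cover and count the size of $A$ directly.

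\textbf{Step 3: combine two almost-facets.} Given $x_{c,d}\in N$ with $c\ne a$, the set $A$ also contains a partial placement $G'$ of size $m-1$ using every row except row $c$. We then want $|G\cup G'|\ge 2m-2$, i.e.\ $G\cap G'=\emptyset$. This is the main obstacle: a priori $G$ and $G'$ can share vertices, which would only give $|A|\ge m$.

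To force disjointness we would use the two remaining hypotheses. Independence of $A$ forbids $G\cup G'$ from containing any full facet. If $G$ and $G'$ share a vertex $x_{i,j}$, we would try to exchange along the shared rook to produce a facet inside $A$, or to produce a vertex of $N$ that is redundant, contradicting minimality of $N$. Concretely, we would analyze the bipartite graph formed by the rooks of $A$ and use a matching-exchange (augmenting-path) argument. If this does not directly yield disjointness, we would instead prove by this exchange argument that every row other than one contributes at least two vertices of $A$, which gives $|A|\ge 2(m-1)$. We expect this per-row counting, supported by minimality of the cover $N$, to be the technical core of the proof.
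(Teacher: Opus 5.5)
\textbf{There is a genuine gap.} Your reduction via Lemma \ref{edgeideal} and your Step 1 are fine. But the argument stops where the real work starts, and both targets you set in Step 3 are false.

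Take $m=n=4$ and $A=\{x_{1,1},x_{1,2},x_{1,3},x_{2,1},x_{3,2},x_{4,3}\}$. Since $A$ lies in columns $1,2,3$, it contains no facet. One checks that $N_{\Delta_{4,4}}(A)=\{x_{1,4},x_{2,4},x_{3,4},x_{4,4}\}$ is the whole fourth column, which is a minimal vertex cover. So $A\in\mathcal{A}_{\Delta_{4,4}}$ with $|A|=6=2m-2$. Rows $2,3,4$ each meet $A$ in a single cell, so the almost-facet attached to each $x_{a,4}$ is forced: $\{x_{2,1},x_{3,2},x_{4,3}\}$, $\{x_{1,1},x_{3,2},x_{4,3}\}$, $\{x_{1,2},x_{2,1},x_{4,3}\}$, $\{x_{1,3},x_{2,1},x_{3,2}\}$. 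Any two of these intersect, so no choice of $G,G'$ is disjoint. Your fallback (two vertices of $A$ in every row but one) also fails here, since three rows contain only one vertex of $A$.

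The example $m=n=3$, $A=\{x_{1,3},x_{2,3},x_{3,1},x_{3,2}\}$, with $N(A)=\{x_{1,1},x_{1,2},x_{2,1},x_{2,2}\}$, shows that neither a count by rows alone nor by columns alone can work. Finally, the headline of Step 2 is wrong. A full row is a minimal cover, and it is exactly $N(A)$ for the extremal set $\{x_{i,i-1},x_{i,i}\mid 2\le i\le m\}$ behind Theorem \ref{v_facet_ideals}. You defer that case to a count you never carry out.

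\textbf{What is missing is the shape of $N(A)$.} By \cite[Theorem 3.3]{JZWZ}, $N(A)$ consists of all cells outside a set $R$ of $s$ full rows and a set $C$ of $m-1-s$ full columns, so $A$ lies in the union of these rows and columns. You could also get this from K\"onig's theorem: the bipartite row--column graph with edge set $A$ has maximum matching size exactly $m-1$, so $A$ is covered by $s$ rows and $m-1-s$ columns, and the cover property of $N(A)$ does the rest.

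For any $x_{a,b}\in N(A)$, the almost-facet must place the $m-1-s$ rows outside $R\cup\{a\}$ bijectively onto $C$. Hence it places the rows of $R$ in columns outside $C\cup\{b\}$. Two counts follow:
\begin{itemize}
\item Each column of $C$ contains at least two cells of $A$ in rows outside $R$. At least one is needed, and if there were only one, taking $a$ to be its row gives a contradiction.
\item Each row of $R$ contains at least two cells of $A$ in columns outside $C$. If there were only one, taking $b$ to be its column gives a contradiction.
\end{itemize}
These two regions are disjoint, so $|A|\ge 2(m-1-s)+2s=2m-2$. This is the paper's argument. The point is that the doubling happens along the deleted columns and deleted rows determined by the cover, not row by row and not via two disjoint almost-facets.
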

		\begin{proof}
			For any  $A\in \mathcal{A}_{\Delta_{m,n}}$,  by \cite[Lemma 3.4(a)]{JV},   $(\mathcal{F}(\Delta_{m,n}):\prod\limits_{x_{i,j}\in A}x_{i,j})=(x_{i,j}\mid x_{i,j}\in N_{\Delta_{m,n}}(A))$.  Thus $(x_{i,j}\mid x_{i,j}\in N_{\Delta_{m,n}}(A))\in  \Ass(\mathcal{F}(\Delta_{m,n}))$. By the definition of $\mathcal{A}_{\Delta_{m,n}}$,
			$N_{\Delta_{m,n}}(A)$   is a minimal vertex cover of  $\Delta_{m,n}$.
			By \cite[Theorem 3.3]{JZWZ}, $N_{\Delta_{m,n}}(A)$ can be  obtained from $V(\Delta_{m,n})$ by deleting all elements in $s$ rows, say
			$i_1, \dots, i_s$ rows,   and all elements in $m-1-s$ columns, say $j_1, \dots, j_{m-1-s}$ columns,
			where $0\le s\le m-1$. By convention, we set $\{i_1, \dots, i_s\} = \emptyset$ if $s = 0$, and $\{j_1, \dots, j_{m-1-s}\} = \emptyset$ if $s = m - 1$. 
			
			We consider the following  two  sets of points
			\begin{align*}
				S_1&=\big\{x_{i,j}\mid i\in[m]\setminus\{i_1,\ldots,i_s\}\text{\ and\ } j\in \{j_1,\ldots,j_{m-1-s}\}\big\},\\
				S_2&=\big\{x_{i,j}\mid i\in\{i_1,\ldots,i_s\}\text{\ and\ } j\in [n]\setminus\{j_1,\ldots,j_{m-1-s}\}\big\}
			\end{align*}
			on the chessboard. Let $A_1=A\cap S_1$ and  $A_2=A\cap S_2$, then $|A|\ge |A_1|+|A_2|$.
			
			\medskip
			To prove that $|A|\ge 2m-2$, it suffices  to show that $|A_{1}|\ge 2(m-1-s)$ and $|A_{2}|\ge 2s$.
			
			First, we
			show that $|A_{1}|\ge 2(m-1-s)$.
			The case $s=m-1$ is trivial. Suppose  $s<m-1$.
			By the definition of $N_{\Delta_{m,n}}(A)$,  for any vertex $x_{a,b}\in N_{\Delta_{m,n}}(A)$,   $\{x_{a,b}\}\cup A$  contains a facet of $\Delta_{m,n}$, say $ F_{a,b}$.  Furthermore,  $x_{a,b}\in F_{a,b}$, since  $A$  is an independent set of $\Delta_{m,n}$.
			Thus, $	F_{a,b}\setminus\{x_{a,b}\}\subseteq A$. From the position of $ N_{\Delta_{m,n}}(A)$ on the chessboard, we can see that
			$V(\Delta_{m,n})\setminus N_{\Delta_{m,n}}(A)=\{x_{i_k,1},\ldots,x_{i_k,n}\mid k\in[s]\}\cup \{x_{1,j_\ell},\ldots,x_{m,j_\ell}\mid \ell\in[m-1-s]\}$.
			Since $A\subseteq V(\Delta_{m,n})\setminus N_{\Delta_{m,n}}(A)$,
			\[
			F_{a,b}\setminus\{x_{a,b}\}\subseteq A\subseteq \{x_{i_k,1},\ldots,x_{i_k,n}\mid k\in[s]\}\cup \{x_{1,j_\ell},\ldots,x_{m,j_\ell}\mid \ell\in[m-1-s]\}.
			\]
			We can assume that  $F_{a,b} = \{x_{1,t_1}, \dots, x_{m,t_m}\}$, where $t_1,\ldots,t_m\in [n]$. 
			
			Consider the subset  $W=\{x_{i,j}\mid i\in[m]\setminus\{i_1,\ldots,i_s,a\}, j\in \{j_1,\ldots,j_{m-1-s}\}\}$ of $V(\Delta_{m,n})$, let $\Delta_{m,n}[W]$ be the induced subcomplex  of  $\Delta_{m,n}$ on the set $W$,
			then there exists a facet of $\Delta_{m,n}[W]$, say $F_{a,b}'$,  which has  the form
			\[
			F_{a,b}'=\{x_{r,t_r}\mid r\in[m]\setminus\{i_1,\ldots,i_s,a\}\text{ and\ }t_r\in \{j_1,\ldots,j_{m-1-s}\}\}.
			\]
			Furthermore,  $F_{a,b}'\subseteq A_1\subset S_1$ and $|F_{a,b}'|=m-1-s$.
			
	By the arbitrariness of the choice of vertex $x_{a,b}\in N_{\Delta_{m,n}}(A)$ and $|N_{\Delta_{m,n}}(A)|\ge 2$, we can choose two different vertices $x_{a_1,b_1}, x_{a_2,b_2}\in N_{\Delta_{m,n}}(A)$. Thus, from the above construction, there exist two the induced subcomplexes $\Delta_{m,n}[W_1]$ and  $\Delta_{m,n}[W_2]$ of  $\Delta_{m,n}$, 
			where
			$W_1=\{x_{i,j}\mid i\in[m]\setminus\{i_1,\ldots,i_s,a_1\}, j\in \{j_1,\ldots,j_{m-1-s}\}\}$ and $W_2=\{x_{i,j}\mid i\in[m]\setminus\{i_1,\ldots,i_s,a_2\}, j\in \{j_1,\ldots,j_{m-1-s}\}\}$. 
			Let  $F_{a_1,b_1}'\in \Facets(\Delta_{m,n}[W_1])$ and $F_{a_2,b_2}'\in \Facets(\Delta_{m,n}[W_2])$ be any two facets from $\Delta_{m,n}[W_1]$ and $\Delta_{m,n}[W_2]$, respectively.
			Then $F_{a_1,b_1}'$ and $F_{a_2,b_2}'$
			have the forms
			\begin{align*}
				F_{a_1,b_1}'&=\{x_{r_1,t_{r_1}}\mid r_1\in[m]\setminus\{i_1,\ldots,i_s,a_1\}\text{ and\ }t_{r_1}\in \{j_1,\ldots,j_{m-1-s}\}\},\\
				F_{a_2,b_2}'&=\{x_{r_2,t_{r_2}}\mid r_2\in[m]\setminus\{i_1,\ldots,i_s,a_2\}\text{ and\ }t_{r_2}\in \{j_1,\ldots,j_{m-1-s}\}\}.
			\end{align*}
			Furthermore, $F_{a_1,b_1}',F_{a_2,b_2}'\subsetneq A_1\subset S_1$ and $|F_{a_1,b_1}'|=|F_{a_2,b_2}'|=m-1-s$. Therefore, for any $\ell\in \{j_1,\ldots,j_{m-1-s}\}$, there exist $r_1,r_2$ such that  $r_1\ne r_2$ and $t_{r_1}=t_{r_2}=\ell$.  By the arbitrariness of the choice of $\ell$, we can conclude that $|A_{1}|\ge 2(m-1-s)$.
			
			By symmetry, we can also conclude that $|A_2| \ge 2s$ and complete the proof. \hfill $\square$
		\end{proof}

		\begin{Theorem}\label{v_facet_ideals}
			Let $\Delta_{m,n}$ be a chessboard complex with $n \geq m \geq 2$. Then
			\[
			\mathrm{v}(\mathcal{F}(\Delta_{m,n}))= 2m-2.
			\]
		\end{Theorem}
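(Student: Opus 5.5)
The plan is to pair the lower bound from Lemma \ref{lowbound2} with an explicit construction giving the matching upper bound. By Lemma \ref{lowbound2}, $\mathrm{v}(\mathcal{F}(\Delta_{m,n}))\ge 2m-2$, so it remains to prove $\mathrm{v}(\mathcal{F}(\Delta_{m,n}))\le 2m-2$. For this I will exhibit a set $A\in\mathcal{A}_{\Delta_{m,n}}$ with $|A|=2m-2$. Lemma \ref{edgeideal} then applies, because $\mathcal{F}(\Delta_{m,n})$ is not prime when $m\ge 2$: it is generated by at least two monomials of degree $m\ge 2$.

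\textbf{The set $A$.} Guided by the case $s=0$ in the proof of Lemma \ref{lowbound2}, I delete the first $m-1$ columns. Take
\[
A=\{x_{j,j},\,x_{j+1,j}\mid j\in[m-1]\},
\]
so column $j$ carries the two squares in rows $j$ and $j+1$. Then $|A|=2m-2$.

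\textbf{$A$ is independent.} Every facet of $\Delta_{m,n}$ uses $m$ distinct columns, while $A$ lies in only $m-1$ columns. Hence $A$ contains no facet.

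\textbf{Computing $N_{\Delta_{m,n}}(A)$.} I claim $N_{\Delta_{m,n}}(A)=C:=\{x_{a,b}\mid a\in[m],\ b\in[m,n]\}$.
\begin{itemize}
\item If $b\le m-1$, then $\{x_{a,b}\}\cup A$ still lies in $m-1$ columns, so it contains no facet and $x_{a,b}\notin N_{\Delta_{m,n}}(A)$.
\item If $b\ge m$, the set $\{x_{a,b}\}\cup\{x_{i,i}\mid i<a\}\cup\{x_{i,i-1}\mid i>a\}$ is a facet, since rows are distinct and the columns used are $[m-1]\cup\{b\}$. It is contained in $\{x_{a,b}\}\cup A$, so $x_{a,b}\in N_{\Delta_{m,n}}(A)$.
\end{itemize}

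\textbf{$C$ is a minimal vertex cover.} It is a cover because every facet meets some column $\ge m$. It is minimal because for $x_{a,b}\in C$ the facet just constructed meets $C$ only in $x_{a,b}$. Hence $A\in\mathcal{A}_{\Delta_{m,n}}$. Lemma \ref{edgeideal} gives $\mathrm{v}(\mathcal{F}(\Delta_{m,n}))\le |A|=2m-2$. Alternatively, \cite[Lemma 3.4(a)]{JV} gives directly $(\mathcal{F}(\Delta_{m,n}):\prod_{x\in A}x)=(x\mid x\in C)$.

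The only delicate step is the neighbor-set computation. It hinges on the staircase choice of $A$, which lets every row $a$ be omitted while the remaining $m-1$ rows are still matched bijectively into the columns $1,\dots,m-1$ inside $A$. Once that is checked, the theorem follows by combining the two bounds.
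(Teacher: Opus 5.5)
Your proof is correct and follows the paper's strategy: the lower bound comes from Lemma \ref{lowbound2}, and the upper bound from an explicit staircase of $2m-2$ squares. The differences are cosmetic. The paper uses the row version $f=\prod_{i=2}^m x_{i,i-1}x_{i,i}$ and checks $(\mathcal{F}(\Delta_{m,n}):f)=(x_{1,1},\ldots,x_{1,n})$ directly, via explicit facets and the inclusion $\mathcal{F}(\Delta_{m,n})\subseteq\mathfrak{p}$. You use the transposed column staircase, whose colon is the prime on columns $m,\dots,n$, and verify it through the neighbor-set criterion of Lemma \ref{edgeideal}.
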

		\begin{proof} It suffices to show that $\mathrm{v}(\mathcal{F}(\Delta_{m,n}))\le 2m-2$ by  Lemma \ref{lowbound2}.
			
			Choose $f = \prod\limits_{i=2}^{m} x_{i,i-1}x_{i,i}$, then, for any $k\in[n]$, 
			\[
			x_{1,k}f=
			\begin{cases}
				\biggl(x_{1,k} \left(\prod\limits_{i=2}^{k} x_{i,i-1}\right)\left(\prod\limits_{j=k+1}^{m} x_{j,j}\right)\biggr)\left(\prod\limits_{i=k+1}^{m} x_{i,i-1}\right)\left(\prod\limits_{j=2}^{k} x_{j,j}\right), &\text{if $k\in[m-1]$,}\\[2ex]
				\biggl(x_{1,k} ( \prod\limits_{i=2}^{m} x_{i,i-1}) \biggr)\prod\limits_{j=2}^{m} x_{j,j}, &\text{if $k\in[m,n]$.}\\
			\end{cases}
			\]
			Thus, $x_{1,k}f\in \mathcal{F}(\Delta_{m,n})$, which implies $x_{1,k}\in (\mathcal{F}(\Delta_{m,n}):f)$. Therefore, $\mathfrak{p} \subseteq (\mathcal{F}(\Delta_{m,n}) : f)$, where $\mathfrak{p} = (x_{1,1}, x_{1,2}, \ldots, x_{1,n})$. By the definition of the facet ideal of $\Delta_{m,n}$, $\mathcal{F}(\Delta_{m,n})\subseteq \mathfrak{p}$, thus $(\mathcal{F}(\Delta_{m,n}) : f)\subseteq (\mathfrak{p}:f)\subseteq \mathfrak{p}$. Therefore, $(\mathcal{F}(\Delta_{m,n}) : f)=\mathfrak{p}$, it follows that $\mathrm{v}(\mathcal{F}(\Delta_{m,n}))\le \mathrm{v}_{\mathfrak{p}}(\mathcal{F}(\Delta_{m,n}))\le 2m-2$.
		\end{proof}

		Assume that $t\ge 2$.  By \cite[Theorem 3.3]{JZWZ},  we know that  $\mathcal{F}(\Delta_{m,n})=\bigcap\limits_{|U|+|V|=n+1} \mathfrak{p}_{U,V}$, where $\mathfrak{p}_{U,V}=(x_{i,j}\mid i\in U\subseteq[m],j\in V\subseteq[n])$. The $t$-th symbolic power of $\mathcal{F}(\Delta_{m,n})$ is $\mathcal{F}(\Delta_{m,n})^{(t)}=\bigcap\limits_{|U|+|V|=n+1} \mathfrak{p}_{U,V}^t$. 
	
			\begin{Lemma}\label{symbolic_lowbound3}
			Let $\Delta_{m,n}$ be a chessboard complex with $n \geq m \geq 2$. Then
			\[
			\mathrm{v}(\mathcal{F}(\Delta_{m,n})^{(t)})\ge \Omega_{m,n},
			\]
			where 
			$\Omega_{m,n}=
			\begin{cases}
				mt,  &\text{if $m=n$,}\\
				mt-1,&\text{if $n>m$.}
			\end{cases}
			$
		\end{Lemma}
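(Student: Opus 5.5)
The plan is to bound the degree of any monomial $f$ with $(\mathcal{F}(\Delta_{m,n})^{(t)}:f)=\mathfrak{p}_{U,V}$ from below, using only the \emph{row primes} and \emph{column primes} among the associated primes together with Lemma \ref{symboliccolon}.

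\emph{Step 1 (reduction).} Since $\mathcal{F}(\Delta_{m,n})$ is squarefree, $\Ass(\mathcal{F}(\Delta_{m,n}))=\Min(\mathcal{F}(\Delta_{m,n}))$. By \cite[Theorem 3.3]{JZWZ} these primes are exactly the $\mathfrak{p}_{U,V}$ with $|U|+|V|=n+1$. Moreover, the v-number of a monomial ideal is attained by a monomial $f$. So fix such a monomial $f$ and some $\mathfrak{p}_{U,V}$ with $(\mathcal{F}(\Delta_{m,n})^{(t)}:f)=\mathfrak{p}_{U,V}$. By Lemma \ref{symboliccolon},
\[
\deg_{\mathfrak{p}_{U,V}}(f)=t-1 \quad\text{and}\quad \deg_{\mathfrak{q}}(f)\ge t \text{ for every other associated prime } \mathfrak{q}.
\]
It remains to show $\deg(f)\ge \Omega_{m,n}$.

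\emph{Step 2 (special primes).} For each row $i\in[m]$, take $U=\{i\}$ and $V=[n]$. Since $|U|+|V|=n+1$, the row prime $R_i=(x_{i,1},\dots,x_{i,n})$ is associated. Write $r_i=\deg_{R_i}(f)$ for the row degree. The $R_i$ involve pairwise disjoint variables, so
\[
\deg(f)\ge \sum_{i=1}^m r_i.
\]
When $m=n$, the column primes $C_j=(x_{1,j},\dots,x_{m,j})$, obtained from $U=[m]$ and $V=\{j\}$, are also associated, since $m+1=n+1$. Their column degrees $c_j=\deg_{C_j}(f)$ likewise satisfy $\deg(f)\ge \sum_{j=1}^n c_j$.

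\emph{Step 3 (case analysis).}

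Case $n>m$. If $\mathfrak{p}_{U,V}$ is not a row prime, then every $r_i\ge t$, so $\deg(f)\ge mt$. If $\mathfrak{p}_{U,V}=R_{i_0}$, then $r_{i_0}=t-1$ and $r_i\ge t$ for $i\ne i_0$, so $\deg(f)\ge (m-1)t+t-1=mt-1$. In both cases $\deg(f)\ge mt-1$.

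Case $m=n$. If $\mathfrak{p}_{U,V}$ is not a row prime, the row sum already gives $\deg(f)\ge mt$. If $\mathfrak{p}_{U,V}=R_{i_0}$, then it differs from every column prime $C_j$, because $m\ge 2$. Hence each $c_j\ge t$, and summing over the $n=m$ columns gives $\deg(f)\ge mt$.

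Taking the minimum over all associated primes then gives $\mathrm{v}(\mathcal{F}(\Delta_{m,n})^{(t)})\ge \Omega_{m,n}$.

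\emph{Main obstacle.} The only delicate point is confirming that the row primes, and for $m=n$ the column primes, really are among the $\mathfrak{p}_{U,V}$ with $|U|+|V|=n+1$. It is also worth checking that a row prime is never equal to a column prime; this holds precisely because $m\ge 2$.
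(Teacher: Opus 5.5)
Your proof is correct and matches the paper's argument: you use Lemma~\ref{symboliccolon} on the row primes $(x_{i,1},\dots,x_{i,n})$ to get $\deg(f)\ge mt$ or $mt-1$, and when $m=n$ and the target prime is a row prime you switch to the column primes $(x_{1,j},\dots,x_{m,j})$ to get $mt$, exactly as the paper does. The only difference is ordering: you split first on $n>m$ versus $n=m$, while the paper splits first on whether $\mathfrak{p}_{U,V}$ is a row prime.
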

		\begin{proof}
				Let $\mathcal{F}=\mathcal{F}(\Delta_{m,n})$. Then $\mathcal{F}^{(t)}=\bigcap\limits_{|U|+|V|=n+1} \mathfrak{p}_{U,V}^t$, where $\mathfrak{p}_{U,V}=(x_{i,j}\mid i\in U\subseteq[m],j\in V\subseteq[n])$.  Since  $\mathrm{v}(\mathcal{F}^{(t)})=\min\{\mathrm{v}_{\mathfrak{p}_{U,V}}(\mathcal{F}^{(t)}): |U|+|V|=n+1\}$, it suffices to show that $\mathrm{v}_{\mathfrak{p}_{U,V}}(\mathcal{F}^{(t)})\ge \Omega_{m,n}$ for any  $U\subseteq[m], V\subseteq[n]$ such that $|U|+|V|=n+1$. 
			
			Fix a $\mathfrak{p}_{U,V}$ such that  $U\subseteq[m], V\subseteq[n]$ and  $|U|+|V|=n+1$,  by the definition of $\mathrm{v}_{\mathfrak{p}_{U,V}}(\mathcal{F}^{(t)})$, there exists a monomial $f=\prod\limits_{i=1}^{m}\prod\limits_{j=1}^{n}x_{i,j}^{s_{i,j}}$ of degree $\mathrm{v}_{\mathfrak{p}_{U,V}}(\mathcal{F}^{(t)})$ such that $(\mathcal{F}^{(t)}:f)=\mathfrak{p}_{U,V}$.  We distinguish into the following two cases:

		(1) If $\mathfrak{p}_{U,V}\ne (x_{\ell,j}\mid j\in[n])$ for any   $\ell\in[m]$, then, by  Lemma \ref{symboliccolon},  
		\[
		\deg_{\mathfrak{p}_\ell}(f)=\sum\limits_{k=1}^{n}s_{\ell,k}\ge t,
		\]
		where $\mathfrak{p}_\ell=(x_{\ell,j}\mid j\in[n])$. Thus
			\[
		\deg(f) = \sum_{\ell=1}^m \left( \sum_{k=1}^n s_{\ell, k} \right) \ge mt \ge \Omega_{m,n}.
		\]
		
		(2) If $\mathfrak{p}_{U,V}= (x_{\ell,j}\mid j\in[n])$ for some  $\ell\in[m]$, then we can assume that $\mathfrak{p}_{U,V}=\mathfrak{p}_{1}$.  There are two subcases:
		
		(i)  If $n=m$, then  $\mathfrak{p}_{U,V}\ne \mathfrak{q}_k$ for any $k\in [n]$, where $\mathfrak{q}_k=(x_{i,k}\mid  i\in[m])$.  By  Lemma \ref{symboliccolon},   $\deg_{\mathfrak{q}_k}(f)=\sum\limits_{\ell=1}^{m}s_{\ell,k}\ge t$. Thus $\deg(f)=\sum\limits_{k=1}^{n}(\sum\limits_{\ell=1}^{m}s_{\ell,k})\ge mt\ge \Omega_{m,n}$. 
		
		(ii) If $n>m$, then   $\deg_{\mathfrak{p}_{U,V}}(f)=\deg_{\mathfrak{p}_{1}}(f)=\sum\limits_{k=1}^{n}s_{1,k}=t-1$ and $\deg_{\mathfrak{p}_{\ell}}(f)=\sum\limits_{k=1}^{n}s_{\ell,k}\ge t$ for any $\ell\in[2,m]$. Thus
		\[
		\deg(f)=\sum\limits_{k=1}^{n}s_{1,k}+\sum\limits_{\ell=2}^{m}(\sum\limits_{k=1}^{n}s_{\ell,k})\ge (t-1)+(m-1)t=mt-1\ge  \Omega_{m,n}. \qedhere
		\]
\end{proof}

			\begin{Theorem}\label{symbolicpower2}
				Let $\Delta_{m,n}$ be a chessboard complex with $n \geq m \geq 2$. Then 
				\[
				\mathrm{v}(\mathcal{F}(\Delta_{m,n})^{(t)})= \Omega_{m,n}, 
				\]
				where $\Omega_{m,n}$ is defined as in Lemma \ref{symbolic_lowbound3}. 
			\end{Theorem}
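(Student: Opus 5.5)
The lower bound $\mathrm{v}(\mathcal{F}(\Delta_{m,n})^{(t)})\ge \Omega_{m,n}$ is Lemma \ref{symbolic_lowbound3}, so I only need the upper bound. For that I will exhibit one monomial $f$ of degree $\Omega_{m,n}$ with $(\mathcal{F}^{(t)}:f)=\mathfrak{p}_1$, where $\mathcal{F}=\mathcal{F}(\Delta_{m,n})$ and $\mathfrak{p}_1=(x_{1,j}\mid j\in[n])=\mathfrak{p}_{\{1\},[n]}$, which is associated since $1+n=n+1$.

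By Lemma \ref{symboliccolon} applied to $\mathcal{F}^{(t)}=\bigcap_{|U|+|V|=n+1}\mathfrak{p}_{U,V}^t$, it suffices to check two things. First, $\deg_{\mathfrak{p}_1}(f)=\sum_k s_{1,k}=t-1$. Second, $\deg_{\mathfrak{p}_{U,V}}(f)=\sum_{i\in U,\,j\in V}s_{i,j}\ge t$ for every other pair $(U,V)$ with $|U|+|V|=n+1$. The only combinatorial fact I will use is that $V$ misses exactly $|U|-1$ columns.

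\emph{Case $n>m$.} I take
\[
f=x_{1,1}^{t-1}\prod_{i=2}^m x_{i,i}^{t-1}x_{i,n},
\]
which has degree $(t-1)+(m-1)t=mt-1$. Row $1$ has total degree $t-1$. Each row $i\ge2$ in $U$ contributes $(t-1)[i\in V]+[n\in V]$, and row $1$ contributes $(t-1)[1\in V]$. Let $k\ge1$ be the number of rows $\ge2$ in $U$; if $k=0$ then $(U,V)=(\{1\},[n])$, which is $\mathfrak{p}_1$ itself.

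\begin{itemize}
\item Suppose $1\notin U$. Then only $k-1$ columns are missing, so some diagonal column $i$ with $i\in U$ lies in $V$. If $n\in V$, this row already gives $t$. If $n\notin V$, then $n$ uses one of the missing slots, so at least two diagonal columns survive, giving $2(t-1)\ge t$ because $t\ge2$.
\item Suppose $1\in U$. Then $k$ columns are missing. If all $k$ of them are the diagonal columns, then $1,n\in V$ and the total is $(t-1)+k\ge t$. Otherwise some diagonal column $i$ lies in $V$. If moreover $n\in V$ we get $t$. If $n\notin V$, either column $1$ is also missing, which forces a second surviving diagonal column, or $1\in V$; in both subcases we get $2(t-1)\ge t$.
\end{itemize}

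\emph{Case $n=m$.} I take
\[
f=x_{1,1}^{t-1}\,x_{2,1}\,x_{2,2}^{t}\prod_{i=3}^m x_{i,i}^{t},
\]
which has degree $mt$. The extra factor $x_{2,1}$ plays the role that $x_{i,n}$ played above: it rescues the dangerous primes in which $V$ contains column $1$ but avoids the relevant diagonal columns. The same counting on missing columns then has to be redone, now with $|V|=m+1-|U|$ and using column $1$ as the extra surviving column.

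The main obstacle is this exhaustive verification that $\deg_{\mathfrak{p}_{U,V}}(f)\ge t$ for all pairs $(U,V)$. It is a pigeonhole argument on how many diagonal columns $V$ can omit, and it is delicate mainly in the square case, where there is no spare column $n>m$. If the proposed $f$ fails for some $(U,V)$ there, I would instead spread the extra unit of degree along a cycle of off-diagonal entries, and recheck with the same counting.
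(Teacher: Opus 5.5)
\textbf{Verdict: the case $n>m$ is correct, but the square case $n=m\ge 3$ has a genuine gap because your monomial fails.}

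\textbf{The case $n>m$.} Your argument here is correct. With $f=x_{1,1}^{t-1}\prod_{i=2}^m x_{i,i}^{t-1}x_{i,n}$, the pigeonhole count on the $|U|-1$ missing columns goes through, using $t\ge 2$. This is a valid alternative to the paper's monomial $x_{1,m+1}^{t-1}\prod_{i=2}^m x_{i,i-1}x_{i,i}^{t-1}$.

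\textbf{The case $n=m\ge 3$.} Your monomial $f=x_{1,1}^{t-1}x_{2,1}x_{2,2}^{t}\prod_{i=3}^m x_{i,i}^{t}$ does not work. Take $U=\{1,3\}$ and $V=[m]\setminus\{3\}$, so $|U|+|V|=m+1$. Row $1$ meets $V$ only in $x_{1,1}$, and row $3$ has no support in $V$. Hence $\deg_{\mathfrak{p}_{U,V}}(f)=t-1<t$. By Lemma~\ref{symboliccolon}, $(\mathcal{F}^{(t)}:f)\neq\mathfrak{p}_1$; concretely, $x_{1,3}f\notin\mathfrak{p}_{U,V}^t$. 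The same failure occurs for every $U\ni 1$ with $2\notin U$, $|U|\ge 2$, and $V=[m]\setminus(U\setminus\{1\})$. The factor $x_{2,1}$ only rescues primes with $2\in U$. (For $m=n=2$ your monomial is fine, and it coincides with the paper's.)

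\textbf{Why your fallback cannot work as stated.} This is structural, not a matter of tuning. Testing against $U=\{1\}\cup W$ and $V=[m]\setminus W$ shows that any valid $f$ needs, for every nonempty $W\subseteq[2,m]$, an entry $x_{i,j}$ with $i\in W$ and $j\notin W$. The reason is that row $1$ has total degree exactly $t-1$, so it cannot supply the $t$-th unit. As long as rows $2,\dots,m$ keep $x_{i,i}^t$, the degree budget $mt$ leaves a single off-diagonal unit. Once $m\ge 3$, that one unit cannot serve all singletons $W=\{i\}$. So spreading ``the extra unit'' along a cycle cannot succeed. You must lower the diagonal exponents of rows $2,\dots,m$ to $t-1$ and use the freed degree to route every row back to column $1$.

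\textbf{The paper's fix.} The paper takes $f=x_{1,1}^{t-1}x_{2,m}\prod_{i=2}^m x_{i,i-1}x_{i,i}^{t-1}$. Here every column sum equals $t$, and rows in $[2,m]$ have sums at least $t$.
\begin{itemize}
\item If $1\notin U$ or $2\in U$: the rows of $U$ carry at least $|U|t$. The $|U|-1$ missing columns remove at most $(|U|-1)t$, leaving at least $t$.
\item If $1\in U$, $2\notin U$, and column $1$ is missing: column $1$ removes exactly $t-1$ from the rows of $U$. The other $|U|-2$ missing columns remove at most $(|U|-2)t$. Since those rows carry $|U|t-1$, at least $t$ remains.
\item If $1\in U$, $2\notin U$, and $1\in V$: row $1$ contributes $t-1$. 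One more unit comes from some $x_{i,i}$ with $i\in(U\setminus\{1\})\cap V$. If that set is empty, it comes from $x_{\delta,\delta-1}$ with $\delta=\min(U\setminus\{1\})$, since then $\delta-1\in V$.
\end{itemize}
The factor $x_{2,m}$ is needed because otherwise column $m$ would sum to $t-1$, and $f$ would fail the prime $\mathfrak{p}_{[m],\{m\}}$.
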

				\begin{proof}
					Let $\mathcal{F}=\mathcal{F}(\Delta_{m,n})$. It suffices to show that   $\mathrm{v}(\mathcal{F}^{(t)})\le \Omega_{m,n}$  by Lemma \ref{symbolic_lowbound3}. 
				
				Choose
				\[
				f=
				\begin{cases}
					x_{1,1}^{t-1}x_{2,m}\prod\limits_{i=2}^{m}x_{i,i-1}x_{i,i}^{t-1},&\text{if $m=n$,}\\[4pt]
					x_{1,m+1}^{t-1}\prod\limits_{i=2}^{m}x_{i,i-1}x_{i,i}^{t-1},    &\text{if $m<n$.}
				\end{cases}
				\]
				Then $\deg(f)=\Omega_{m,n}$.

				We will prove that $(\mathcal{F}^{(t)}:f)=\mathfrak p_{U_0,V_0}$, where $U_0=\{1\}$ and $V_0=[n]$. By  Lemma \ref{symboliccolon}, this is equivalent to proving that $\deg_{\mathfrak p_{U_0,V_0}}(f)=t-1$ and $\deg_{\mathfrak{p}_{U,V}}(f)\ge t$ for any $U\subseteq[m], V\subseteq[n]$ such that $|U|+|V|=n+1$ and $U\neq \{1\}$.  
				
				First, we compute  $\deg_{\mathfrak p_{U_0,V_0}}(f)$.
				By the choice of $f$, 
				\[
				\supp(f)\cap\supp(\mathfrak p_{U_0,V_0})=\begin{cases}
					\{x_{1,1}\},&\text{if $m=n$,}\\ 
					\{x_{1,m+1}\},    &\text{if $m<n$.}
				\end{cases}
				\]
				Thus 
				\begin{align*}
					\deg_{\mathfrak p_{U_0,V_0}}(f)&=\begin{cases}
						\deg_{x_{1,1}}(f),&\text{if $m=n$,}\\ 
						\deg_{x_{1,m+1}}(f),    &\text{if $m<n$.}
					\end{cases}\\
					&=t-1.
				\end{align*}
				Next,  we will prove that $\deg_{\mathfrak{p}_{U,V}}(f)\ge t$ for any  $U\subseteq[m], V\subseteq[n]$ such that $|U|+|V|=n+1$ and $U\neq \{1\}$.  
				
				Let $\mathfrak{q}=\mathfrak{p}_{U,V}$.  We consider the following two cases:
				
				(1)  If one of the following conditions holds: (i) $m = n$ and $2 \in U$, or (ii) $1 \notin U$ for any $n \ge m$, then
				\[
				\deg_{\mathfrak q}(f)=\sum\limits_{i\in U}\sum\limits_{j\in V} \deg_{x_{i,j}}(f)=\sum\limits_{i\in U}\sum\limits_{j\in[n]} \deg_{x_{i,j}}(f)-\sum\limits_{i\in U}\sum\limits_{j\in[n]\setminus V}\deg\limits_{x_{i,j}}(f).
				\]
				To prove $\deg_{\mathfrak{q}}(f)\ge t$, we will  prove 
				\[
				\sum\limits_{i\in U}\sum\limits_{j\in[n]} \deg_{x_{i,j}}(f)\ge |U|t
				\quad\text{and}\quad
				\sum\limits_{i\in U}\sum\limits_{j\in[n]\setminus V}\deg\limits_{x_{i,j}}(f)\le (|U|-1)t.
				\]
				By the expression for $f$, it is easy to see  $\sum\limits_{i\in U}\sum\limits_{j\in[n]} \deg_{x_{i,j}}(f) \ge |U|t$ and  $\sum\limits_{i=1}^m \deg_{x_{i,\ell}}(f) \le t$ for each $\ell \in [n]$. Since $|U|+|V|=n+1$,  $|[n]\setminus V|=n-|V|=|U|-1$. Thus
				\[
				\sum\limits_{i\in U}\sum\limits_{j\in[n]\setminus V}\deg\limits_{x_{i,j}}(f)\le\sum\limits_{i\in [m]}\sum\limits_{j\in[n]\setminus V}\deg\limits_{x_{i,j}}(f)=\sum\limits_{j\in[n]\setminus V}\bigl(\sum\limits_{i\in [m]}\deg\limits_{x_{i,j}}(f)\bigr)\le (|U|-1)t. 
				\]

			(2) If one of the following conditions holds: (i) $m=n$, $1 \in U$ and $2 \notin U$, or (ii) $n>m$  and $1 \in U$, then $|U|\ge 2$, 
			since $U\neq \{1\}$. Thus we set 
			\[
			W=U\setminus\{1\}, \quad j_0=
			\begin{cases}
				1,  &\text{if $m=n$,}\\
				m+1,&\text{if $n>m$,}
			\end{cases} 
			\quad\text{and}\quad  g=x_{1,j_0}^{t-1}\prod\limits_{i\in W}x_{i,i-1}x_{i,i}^{t-1}.
			\]
			Thus  we can obtain that $\deg(g)=|U|t-1$ and that the first index of all variables in the expression for $g$ is taken from $U$.
			Furthermore,  $f=gh$, where 
			\[
			h=
			\begin{cases}
				x_{2,m}\prod\limits_{i\in[2,m]\setminus W}x_{i,i-1}x_{i,i}^{t-1},&\text{if $m=n$,}\\[4pt]
				\prod\limits_{i\in[2,m]\setminus W}x_{i,i-1}x_{i,i}^{t-1},    &\text{if $m<n$.}
			\end{cases}
			\]
			
			It  is clear that $\deg_{\mathfrak q}(h)=0$ and $\deg_{\mathfrak q}(f)=\deg_{\mathfrak q}(g)+\deg_{\mathfrak q}(h)=\deg_{\mathfrak q}(g)$.

				To compute $\deg_{\mathfrak{q}}(g)$, we need to consider the following two subcases:
				
				(I) If $j_0 \notin V$, then $j_0 \in [n] \setminus V$. Note that
				\begin{align*}
				\deg_{\mathfrak{q}}(g) &= \sum_{i \in U} \sum_{j \in V} \deg_{x_{i,j}}(g) = \sum_{i \in U} \sum_{j \in [n]} \deg_{x_{i,j}}(g) - \sum_{i \in U} \sum_{j \in [n] \setminus V} \deg_{x_{i,j}}(g) \\
					&= \deg(g) - \sum_{i \in U} \sum_{j \in [n] \setminus V} \deg_{x_{i,j}}(g).
				\end{align*}
				
				Furthermore,
				\[
				\sum_{i \in U} \sum_{j \in [n] \setminus V} \deg_{x_{i,j}}(g) = \sum_{i \in U} \deg_{x_{i,j_0}}(g) + \sum_{i \in U} \sum_{j \in [n] \setminus (V \cup \{j_0\})} \deg_{x_{i,j}}(g).
				\]
				
				Therefore, if we prove that
				\[
				\sum_{i \in U} \deg_{x_{i,j_0}}(g) = t - 1 \quad \text{and} \quad \sum_{i \in U} \sum_{j \in [n] \setminus (V \cup \{j_0\})} \deg_{x_{i,j}}(g) \le (|U| - 2)t,
				\]
				then we can deduce that $\deg_{\mathfrak{q}}(g)  \ge t$.
				
				It is easy to see that   $\sum\limits_{i\in U}\deg\limits_{x_{i,j_0}}(g)=\deg\limits_{x_{1,j_0}}(g)=t-1$,  since $\supp(g)\cap \{x_{i,j_0}\mid i\in U\}=\{x_{1,j_0}\}$.  
				
				From the expression of $g$, we can obtain  $\sum\limits_{i\in U} \deg_{x_{i,\ell}}(g) \le t$ for every $\ell \in [n]$. 
				Thus 
					\[
				\sum_{i \in U} \sum_{j \in [n] \setminus (V \cup \{j_0\})} \deg_{x_{i,j}}(g) \le (|U| - 2)t,
				\]
since  $|[n]\setminus (V\cup\{j_0\})|=n-|V|-1=|U|-2$. 
				
	\medskip
	(II) If $j_0\in V$, then  
	\begin{align*}
		\deg_{\mathfrak q}(g)&=\sum\limits_{i\in U}\sum\limits_{j\in V}\deg\limits_{x_{i,j}}(g)=\sum\limits_{j\in V}\deg\limits_{x_{1,j}}(g)+\sum\limits_{i\in W}\sum\limits_{j\in V}\deg\limits_{x_{i,j}}(g)\\
		&=\deg_{x_{1,j_0}}(g)+\sum\limits_{i\in W}\sum\limits_{j\in V}\deg\limits_{x_{i,j}}(\prod\limits_{k\in W}x_{k,k-1}x_{k,k}^{t-1})\\
		&=(t-1)+\sum\limits_{i\in W}\sum\limits_{j\in V}\deg\limits_{x_{i,j}}(\prod\limits_{k\in W}x_{k,k-1}x_{k,k}^{t-1}).
	\end{align*}
	To prove that $	\deg_{\mathfrak{q}}(g) \ge t$,  it suffices to show that  $x_{i,j}\mid \prod\limits_{k\in W}x_{k,k-1}x_{k,k}^{t-1}$  for some $i\in W$
	and $j\in V$.
	We divide in the following  two cases:
	
	(i) If $W\cap V\neq \emptyset$, then there is a $\gamma\in W\cap V$ such that $x_{\gamma,\gamma}\mid \prod\limits_{k\in W}x_{k,k-1}x_{k,k}^{t-1}$.
	
	(ii) If $W\cap V = \emptyset$, then $W=[n]\setminus V$,  since $|W|=|[n]\setminus V|=|U|-1$ .
	Let $\delta=\min\{i\mid i\in W\}$, then $\delta\ge 2$, since $1\notin W$. Furthermore, $\delta-1\notin W=[n]\setminus V$, which implies $\delta - 1 \in V$. Therefore, 
	$x_{\delta,\delta-1}\mid \prod\limits_{k\in W}x_{k,k-1}x_{k,k}^{t-1}$
	with $\delta \in W$ and $\delta - 1 \in V$, which implies $\deg_{\mathfrak{q}}(g) \ge t$ and completes the proof.
			\end{proof}
		
\subsection{Ordinary powers}

In the following, we  will show that   $\mathrm{v}(\mathcal{F}(\Delta_{m,n})^t)$ is linear for $t\gg 0$.

\begin{Lemma}\label{F^t}
	Let  $t$ be a positive integer and $\Delta_{m,m}$  a chessboard complex with $m\ge 2$. Assume that      $f=\prod\limits_{i=1}^{m}\prod\limits_{j=1}^{m}x_{i,j}^{s_{i,j}}$ is a monomial   such that $\deg(f)=mt$. Then $f\in \mathcal{F}(\Delta_{m,m})^t$ if and only if $\sum\limits_{i=1}^{m}s_{i,k}=t$ and $\sum\limits_{j=1}^{m}s_{k,j}=t$ for any $k\in[m]$.
\end{Lemma}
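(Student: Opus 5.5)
The statement is a bipartite-matching (Birkhoff--von Neumann / König) fact. I will prove the two implications separately. Throughout, identify $f$ with its exponent matrix $S=(s_{i,j})_{m\times m}$.

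\emph{Necessity.} Suppose $f\in\mathcal{F}(\Delta_{m,m})^t$. Then $f=h\,u_1\cdots u_t$, where each $u_k=x_{1,\ell_1}\cdots x_{m,\ell_m}$ is a generator of degree $m$ with $\ell_1,\dots,\ell_m$ a permutation of $[m]$, and $h$ is a monomial. Since $\deg(f)=mt=\deg(u_1\cdots u_t)$, we get $h=1$. Each $u_k$ has exactly one variable in every row and exactly one in every column, because $n=m$ forces the $\ell_i$ to be a permutation. Summing over $k$ gives every row sum and every column sum of $S$ equal to $t$.

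\emph{Sufficiency.} Suppose all row and column sums of $S$ equal $t$. I will argue by induction on $t$; the case $t=0$ gives $f=1$ and is trivial.

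For $t\ge 1$, the key step is to find a permutation $\sigma$ of $[m]$ with $s_{i,\sigma(i)}\ge 1$ for all $i$. This is the main point, and I would apply Hall's theorem to the bipartite graph on rows and columns with an edge $(i,j)$ whenever $s_{i,j}>0$.

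To verify Hall's condition, take a set $R$ of rows and let $N(R)$ be the set of columns meeting a positive entry in those rows. The total mass in the rows of $R$ is $|R|t$. All of this mass lies in the columns of $N(R)$, whose total mass is $|N(R)|t$. Hence $|R|t\le |N(R)|t$, so $|N(R)|\ge |R|$ since $t\ge1$. Hall's theorem then gives a perfect matching, i.e., the permutation $\sigma$.

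Set $u=\prod_i x_{i,\sigma(i)}\in\mathcal{G}(\mathcal{F}(\Delta_{m,m}))$. Then $u$ divides $f$, and $f/u$ has degree $m(t-1)$ with all row and column sums equal to $t-1$. By induction, $f/u\in\mathcal{F}(\Delta_{m,m})^{t-1}$, and therefore $f\in\mathcal{F}(\Delta_{m,m})^t$.

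The only nonroutine step is the Hall verification. If one prefers not to quote Hall's theorem, one can instead cite the Birkhoff--von Neumann theorem in its integral form, or König's theorem: $S$ is a nonnegative integer matrix with equal line sums $t$, so it is a sum of $t$ permutation matrices.
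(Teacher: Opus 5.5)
Your proposal is correct and follows the paper's proof. The necessity direction is identical to the paper's. For sufficiency, the paper only cites K\H{o}nig's theorem ([K, Theorem B]) to decompose the exponent matrix into $t$ permutation monomials; your Hall's-theorem-plus-induction argument proves that cited fact directly, and your Hall verification is sound.
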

\begin{proof}
	Let $f\in \mathcal{F}(\Delta_{m,m})^t$, then $f=u_1u_2\cdots u_t$, since $\deg(f)=mt$. For each $u_i$, we can write $u_i$ as 
	$u_i=x_{1,i_1}\cdots x_{m,i_m}$, where  $i_1,\ldots,i_m$ are pairwise distinct elements of $[m]$. Therefore, $\sum\limits_{j=1}^{m}s_{k,j}=t$ for any $k\in[m]$.
	By  symmetry, we also have $\sum\limits_{i=1}^{m}s_{i,k}=t$ for any $k\in[m]$. Conversely, if $\sum\limits_{i=1}^{m}s_{i,k}=t$ and $\sum\limits_{j=1}^{m}s_{k,j}=t$ for any $k\in[m]$, then by \cite[Theorem B]{K}, we can deduce that $f\in \mathcal{F}(\Delta_{m,m})^t$.
\end{proof}

\begin{Theorem}\label{n=m}
	Let $m\ge 2$ and $t$ be two integers such that  $t\ge m-1$, and let  $\Delta_{m,m}$ be a chessboard complex. Then  
	\[
	\mathrm{v}(\mathcal{F}(\Delta_{m,m})^t)=mt.
	\]
\end{Theorem}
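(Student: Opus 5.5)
The plan is to prove the two inequalities $\mathrm{v}(\mathcal{F}(\Delta_{m,m})^t)\ge mt$ and $\mathrm{v}(\mathcal{F}(\Delta_{m,m})^t)\le mt$ separately. Both rest on Lemma \ref{F^t}: a monomial of degree $mt$ lies in $\mathcal{F}^t$ exactly when all its row sums and column sums equal $t$. Write $\mathcal{F}=\mathcal{F}(\Delta_{m,m})$.

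\emph{Lower bound.} Let $f$ be a monomial with $(\mathcal{F}^t:f)=\mathfrak{p}\in\Ass(\mathcal{F}^t)$. Since $\alpha(\mathcal{F}^t)=mt$, Lemma \ref{lowbound} gives $\deg(f)\ge mt-1$, so it remains to rule out $\deg(f)=mt-1$. Suppose $\deg(f)=mt-1$. For every variable $x_{a,b}\in\mathfrak{p}$, the monomial $x_{a,b}f$ has degree $mt$ and lies in $\mathcal{F}^t$, so by Lemma \ref{F^t} all its row and column sums equal $t$. Hence $f$ has all row sums and column sums equal to $t$, except for one row $r$ and one column $c$ whose sums are $t-1$. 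This forces $x_{a,b}=x_{r,c}$, so $\mathfrak{p}=(x_{r,c})$. But $\mathfrak{p}\supseteq\mathcal{F}$ contains some minimal prime $\mathfrak{p}_{U,V}$ with $|U|+|V|=m+1$. Such a prime has $|U||V|\ge m\ge 2$ generators, which is a contradiction. Therefore $\deg(f)\ge mt$.

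\emph{Upper bound.} Take $\mathfrak{p}_1=(x_{1,1},\dots,x_{1,m})$, which is $\mathfrak{p}_{U,V}$ with $U=\{1\}$ and $V=[m]$. I look for $f$ of degree $mt$ such that:
\begin{itemize}
\item row $1$ of $f$ has sum $t-1$;
\item row $2$ has sum $t+1$, with $x_{2,j}\mid f$ for every $j\in[m]$;
\item all other rows have sum $t$;
\item all columns have sum $t$.
\end{itemize}

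\emph{Inclusion $\mathfrak{p}_1\subseteq(\mathcal{F}^t:f)$.} For each $k$, the monomial $x_{1,k}f$ has every row and column sum equal to $t$, except row $2$ and column $k$, which have sum $t+1$. Since $x_{2,k}\mid f$, the monomial $x_{1,k}f/x_{2,k}$ has all margins equal to $t$. By Lemma \ref{F^t} it lies in $\mathcal{F}^t$, hence so does $x_{1,k}f$.

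\emph{Inclusion $(\mathcal{F}^t:f)\subseteq\mathfrak{p}_1$.} Every element of $\mathcal{F}^t$ has row-$1$ degree at least $t$. Multiplying $f$ by a monomial not in $\mathfrak{p}_1$ leaves the row-$1$ degree at $t-1$, so the product is not in $\mathcal{F}^t$. Thus $(\mathcal{F}^t:f)=\mathfrak{p}_1$, and $\mathrm{v}(\mathcal{F}^t)\le mt$.

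\emph{Construction of $f$.} This is the main step, and it is where the hypothesis $t\ge m-1$ enters: row $2$ must contain all $m$ columns, so we need $t+1\ge m$. Put $x_{1,1}^{t-1}$ in row $1$, and $x_{2,2}^{t+1-m}\prod_{j=1}^m x_{2,j}$ in row $2$. The residual column demands are then $0$ for column $1$, $m-2$ for column $2$, and $t-1$ for each column $j\ge 3$. These total $(m-2)t$, which is exactly the required total for rows $3,\dots,m$, each with sum $t$. A nonnegative integer matrix with these margins exists (for instance by the northwest-corner rule), and it supplies the exponents in rows $3,\dots,m$. When $m=2$ there are no further rows and all residuals are $0$, so the construction is already complete.

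I expect this margin-filling step, together with the precise use of $t\ge m-1$, to be the main point requiring care.
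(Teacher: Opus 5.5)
Your proof is correct and follows essentially the same route as the paper. For the lower bound, Lemma \ref{lowbound} together with the margin criterion of Lemma \ref{F^t} forces $\mathfrak{p}$ to be generated by a single variable. For the upper bound, you use a degree-$mt$ monomial with row sums $t-1,t+1,t,\dots,t$, all column sums $t$, and $x_{2,j}$ dividing it for every $j$; this is exactly where $t\ge m-1$ enters.

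The only differences are cosmetic. You close the lower bound by observing that a principal prime $(x_{r,c})$ cannot contain $\mathcal{F}$, whereas the paper exhibits $x_{\alpha,v}x_{u,\beta}\in(\mathcal{F}^t:f)$. You fill rows $3,\dots,m$ by a margin-existence argument rather than the paper's explicit $g$. And you prove $(\mathcal{F}^t:f)\subseteq\mathfrak{p}_1$ directly from row-$1$ degrees instead of citing Lemma \ref{colon}.
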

	\begin{proof}
	Let $\mathcal{F}=\mathcal{F}(\Delta_{m,m})$. If $t=1$, then   $\mathrm{v}(\mathcal{F})=m$ by Theorem \ref{v_facet_ideals}. In the following, we assume that $t\ge \max\{2,m-1\}$.  We first prove that $\mathrm{v}(\mathcal{F}^t)\ge  mt$.
	
	By Lemma \ref{lowbound},  $\mathrm{v}(\mathcal{F}^t)\ge mt-1$. If $\mathrm{v}(\mathcal{F}^t)=mt-1$, then there exists a monomial $f$ of degree $mt-1$ and a $\mathfrak{p}\in\Ass(\mathcal{F}^t)$ such that $(\mathcal{F}^t:f)=\mathfrak{p}$.
	Let $f=\prod\limits_{i=1}^{m}\prod\limits_{j=1}^{m}x_{i,j}^{s_{i,j}}$ such that $\sum\limits_{i=1}^{m}\sum\limits_{j=1}^{m}s_{i,j}= mt-1$.

	Claim: $\mathfrak p$ is generated by exactly one variable. 
	In fact, since  $\mathfrak{p}=(\mathcal{F}^t:f)$ and $\deg(f)=mt-1$, for any $x_{a,b}\in \mathfrak{p}$, we have  $x_{a,b}f\in \mathcal{F}^t$ and  $\deg(x_{a,b}f)=mt$.
	It follows from Lemma \ref{F^t} that  $1+\sum\limits_{i=1}^{m}s_{i,b}=t$, $\sum\limits_{i=1}^{m}s_{i,k}=t$ for any $k\in[m]\setminus\{b\}$, 
	$1+\sum\limits_{j=1}^{m}s_{a,j}=t$ and $\sum\limits_{j=1}^{m}s_{\ell,j}=t$ for any $\ell\in[m]\setminus\{a\}$.
	Suppose for contradiction that $\mathfrak p$ is generated by at least two variables, then there exist  two distinct vertices in $\mathfrak p$, say $x_{a_1,b_1}$ and  $x_{a_2,b_2}$.  Without loss of generality, we assume that  $a_1\ne a_2$. Then $a_2\in [m]\setminus\{a_1\}$. From the discussion above, it follows that 
	$1+\sum\limits_{j=1}^{m}s_{a_1,j}=t$ and $\sum\limits_{j=1}^{m}s_{a_2,j}=t$,  a contradiction. 
	
	Let $\mathfrak{p}=(x_{\alpha,\beta})$ for some $\alpha,\beta\in[m]$.  Applying the discussion above once again, we can conclude that
	$1+\sum\limits_{i=1}^{m}s_{i,\beta}=t$,  $\sum\limits_{i=1}^{m}s_{i,k}=t$ for every $k\in[m]\setminus\{\beta\}$,  $1+\sum\limits_{j=1}^{m}s_{\alpha,j}=t$ and  $\sum\limits_{j=1}^{m}s_{\ell,j}=t$ for every $\ell\in[m]\setminus\{\alpha\}$. Then
	\begin{align*}	\sum\limits_{i\in[m]\setminus\{\alpha\}}\sum\limits_{j\in[m]\setminus\{\beta\}}s_{i,j}&=\sum\limits_{i=1}^{m}\sum\limits_{j=1}^{m}s_{i,j}-\sum\limits_{i=1}^{m}s_{i,\beta}-\sum\limits_{j=1}^{m}s_{\alpha,j}+s_{\alpha,\beta}\\
		&=(mt-1)-(t-1)-(t-1)+s_{\alpha,\beta}\ge 1.
	\end{align*}
	Therefore, there exist $u\in[m]\setminus\{\alpha\}$ and $v\in[m]\setminus\{\beta\}$ such that $s_{u,v}\ge 1$. Thus   by Lemma \ref{F^t}, $x_{\alpha,v}x_{u,\beta}f/x_{u,v}\in \mathcal{F}^t$, which implies   $x_{\alpha,v}x_{u,\beta}\in (\mathcal{F}^t:f)=\mathfrak{p}$, which  is a contradiction. 
	Therefore, we can conclude that $\mathrm{v}(\mathcal{F}^t)\ge  mt$.
	
	In the following, we will show  that $\mathrm{v}(\mathcal{F}^t)\le mt$.

	Choose $g =x_{1,1}^{t-1}x_{2,m}^{t-m+2} (\prod\limits_{i=1}^{m-1} x_{2,i})(\prod\limits_{j=3}^{m}x_{j,j-1}^{t-1} x_{j,m})$. Then 
	$x_{2,r}\in \supp(g)$ for any $r\in[m]$.  By Lemma \ref{F^t},  $x_{1,r}g/x_{2,r}\in \mathcal{F}^t$,  implying $x_{1,r}\in(\mathcal{F}^t:g)$. Therefore, $ (x_{1,1}, x_{1,2}, \ldots, x_{1,m})\subseteq(\mathcal{F}^t : g)$. Let  $\mathfrak{p} = (x_{1,1}, x_{1,2}, \ldots, x_{1,m})$, then,  from the proof of Theorem \ref{v_facet_ideals},  we can obtain $\mathfrak{p} \in \Ass(\mathcal{F})$. 
	Note that $\supp(\mathfrak{p})\cap\supp(g)=\{x_{1,1}\}$,  $\deg_\mathfrak{p}(g)=\deg_{x_{1,1}}(g)=t-1$.   By Lemma \ref{colon},  $(\mathcal{F}^t : g) = \mathfrak{p}$ and  it follows that  $\mathrm{v}(\mathcal{F}^t)\le 	\deg(g)= mt$.
\end{proof}

	\begin{Theorem}\label{2m-1}
		Let  $m \geq 2$ and $n$ be two integers such that $n\ge 2m-1$.  Let
		$\Delta_{m,n}$ be a chessboard complex. Then 
		\[
		\mathrm{v}(\mathcal{F}(\Delta_{m,n})^t)=mt-1, \text{\  for all \ } t\ge 2.
		\]
	\end{Theorem}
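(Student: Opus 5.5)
We prove the two inequalities separately. The lower bound $\mathrm{v}(\mathcal{F}(\Delta_{m,n})^t)\ge mt-1$ is immediate from Lemma \ref{lowbound}, since $\alpha(\mathcal{F}(\Delta_{m,n})^t)=mt$. For the upper bound, we look for a monomial $g$ of degree $mt-1$ and the prime $\mathfrak{p}=(x_{1,1},\dots,x_{1,n})$. The proof of Theorem \ref{v_facet_ideals} shows that $\mathfrak{p}\in\Ass(\mathcal{F}(\Delta_{m,n}))$. We want $g$ to satisfy $\deg_{\mathfrak p}(g)=t-1$ and $\mathfrak{p}\subseteq(\mathcal{F}(\Delta_{m,n})^t:g)$. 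Lemma \ref{colon}, applied with $J=\mathcal{F}(\Delta_{m,n})^t$, then gives $(\mathcal{F}(\Delta_{m,n})^t:g)=\mathfrak{p}$, and hence $\mathrm{v}(\mathcal{F}(\Delta_{m,n})^t)\le mt-1$.

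The key combinatorial tool is a membership criterion for $\mathcal{F}(\Delta_{m,n})^t$. View a monomial $h=\prod x_{i,j}^{s_{i,j}}$ as a bipartite multigraph between the rows $[m]$ and the columns $[n]$, with $s_{i,j}$ edges joining row $i$ to column $j$. Suppose every row has degree exactly $t$ and every column has degree at most $t$. Then the maximum degree is $t$, so by König's edge-colouring theorem the edges split into $t$ matchings. Each row has degree $t$, so each of these matchings meets every row. Each matching is therefore a facet $x_{1,\ell_1}\cdots x_{m,\ell_m}$ with pairwise distinct $\ell_i$, and so $h\in\mathcal{F}(\Delta_{m,n})^t$. (For $n=m$ this is the idea behind Lemma \ref{F^t}. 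Here we only need the sufficiency direction, with column degrees bounded by $t$ rather than equal to $t$.)

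It therefore suffices to choose $g$ so that row $1$ has degree $t-1$, every row $i\ge 2$ has degree $t$, and every column has degree at most $t-1$. For then, for any $j\in[n]$, the monomial $x_{1,j}g$ has all row degrees $t$ and all column degrees at most $t$, so $x_{1,j}g\in\mathcal{F}(\Delta_{m,n})^t$. We take
\[
g=x_{1,1}^{t-1}\prod_{i=2}^{m}x_{i,2i-2}^{t-1}x_{i,2i-1}.
\]
The columns used are $1,2,\dots,2m-1$, and each is used by exactly one factor. This requires $n\ge 2m-1$, which is exactly the hypothesis. Since $t\ge 2$, every column degree is at most $t-1$. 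We also have $\deg(g)=(t-1)+(m-1)t=mt-1$ and $\deg_{\mathfrak p}(g)=\deg_{x_{1,1}}(g)=t-1$, as required.

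The only step needing care is the decomposition into $t$ facets. This is where König's theorem on bipartite multigraphs, in the same spirit as \cite[Theorem B]{K}, is invoked; everything else is direct bookkeeping.
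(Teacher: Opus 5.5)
Your proof is correct and follows the paper's argument. It has the same lower bound from Lemma \ref{lowbound}, the same prime $\mathfrak{p}=(x_{1,1},\dots,x_{1,n})$, the identical witness $x_{1,1}^{t-1}\prod_{i=2}^{m}x_{i,2i-2}^{t-1}x_{i,2i-1}$, and the same appeal to Lemma \ref{colon}. The one difference is how you check $x_{1,\alpha}f\in\mathcal{F}(\Delta_{m,n})^t$: the paper writes out an explicit product of $t$ facet monomials, splitting on the parity of $\alpha$, while you get it from K\"onig's edge-colouring theorem through the row/column degree criterion. Your criterion is cleaner, and it would equally handle the analogous membership check in Theorem \ref{n>m}.
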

	
	\begin{proof}
		Let $ \mathcal{F}=\mathcal{F}(\Delta_{m,n})$. Then, by Lemma \ref{lowbound},   $\mathrm{v}( \mathcal{F}^t)\ge mt-1$. We will prove $\mathrm{v}( \mathcal{F}^t)\le mt-1$.
		
		Since $n\ge 2m-1$, we can choose  $f = x_{1,1}^{t-1}(\prod\limits_{i=2}^{m} x_{i,2i-2}^{t-1}x_{i,2i-1})$. Then, for any $\alpha\in [n]$,  
		\[
	x_{1,\alpha}f=
	\begin{cases}
		h(x_{1,\alpha}\prod\limits_{j=2}^{m}x_{j,2j-2})(x_{1,1}\prod\limits_{k=2}^{m} x_{k,2k-1}), &\text{if $\alpha\in[2m-1]$ is odd,}\\
		h(x_{1,\alpha}\prod\limits_{j=2}^{m}x_{j,2j-1})(x_{1,1}\prod\limits_{\ell=2}^{m} x_{\ell,2\ell-2}),  &\text{otherwise,}
	\end{cases}
	\] 
	where  $h=(x_{1,1}\prod\limits_{i=2}^{m}x_{i,2i-2})^{t-2}$.
		Thus $x_{1,\alpha}f\in \mathcal{F}^t$,  so  $x_{1,\alpha}\in (\mathcal{F}^t:f)$.  From the arbitrariness of $\alpha$, we can conclude that
		$(x_{1,1}, x_{1,2}, \ldots, x_{1,n})\subseteq(\mathcal{F}^t : f)$.  Let  $\mathfrak{p} = (x_{1,1}, x_{1,2}, \ldots, x_{1,n})$, then 
		$\mathfrak{p}\subseteq(\mathcal{F}^t : f)$ and $\mathfrak{p} \in \Ass( \mathcal{F})$. 
		Note that $\supp(\mathfrak{p})\cap\supp(f)=\{x_{1,1}\}$,   $\deg_\mathfrak{p}(f)=\deg_{x_{1,1}}(f)=t-1$.  By Lemma \ref{colon},  $( \mathcal{F}^t : f) = \mathfrak{p}$ and  it follows that   $\mathrm{v}( \mathcal{F}^t)\le\deg(f)= mt-1$. 
	\end{proof}
	
\begin{Theorem}\label{n>m}
	Let  $m \geq 2$ and $n$ be two integers such that $m< n\le 2m-2$, and  let
 $\Delta_{m,n}$ be a chessboard complex. Then  
 \[
 \mathrm{v}(\mathcal{F}(\Delta_{m,n})^t)=mt-1, \text{\  for all \ } t\ge m.
 \]
\end{Theorem}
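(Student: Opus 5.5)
The lower bound $\mathrm{v}(\mathcal{F}(\Delta_{m,n})^t)\ge mt-1$ comes from Lemma \ref{lowbound}, since $\alpha(\mathcal{F}(\Delta_{m,n})^t)=mt$. So the work is the upper bound. As in Theorems \ref{2m-1} and \ref{n=m}, I would aim for a monomial $f$ of degree $mt-1$ with $\deg_{\mathfrak p}(f)=t-1$ and $\mathfrak p\subseteq(\mathcal{F}^t:f)$, where $\mathfrak p=(x_{1,1},\ldots,x_{1,n})$. By the proof of Theorem \ref{v_facet_ideals}, $\mathfrak p\in\Ass(\mathcal{F})$, so Lemma \ref{colon} then gives $(\mathcal{F}^t:f)=\mathfrak p$.

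I would take $f=x_{1,1}^{t-1}\prod_{i=2}^{m}\prod_{j=2}^{n}x_{i,j}^{s_{i,j}}$, where the exponent matrix $(s_{i,j})_{2\le i\le m,\,2\le j\le n}$ is required to satisfy two conditions. First, every row sum equals $t$, i.e. $\sum_{j=2}^n s_{i,j}=t$ for each $i\in[2,m]$. Second, every column sum is at most $t-1$, i.e. $\sum_{i=2}^m s_{i,j}\le t-1$ for each $j\in[2,n]$. Then $\deg(f)=(t-1)+(m-1)t=mt-1$. Also $\supp(f)\cap\supp(\mathfrak p)=\{x_{1,1}\}$, so $\deg_{\mathfrak p}(f)=t-1$. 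Such a matrix exists exactly when the total mass fits, namely when $(m-1)t\le (n-1)(t-1)$, which is equivalent to $t(n-m)\ge n-1$. Writing $n=m+k$ with $1\le k\le m-2$, this reads $t\ge (m+k-1)/k$, and the right side is at most $m$. So the hypothesis $t\ge m$ suffices, and the matrix can be filled greedily, row by row, into columns with remaining capacity. This is exactly where $t\ge m$ and $n>m$ enter.

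Next, for every $\alpha\in[n]$ I must show $x_{1,\alpha}f\in\mathcal{F}^t$. View the exponent matrix of $x_{1,\alpha}f$ as a bipartite multigraph between the rows $[m]$ and the columns $[n]$. Each row has degree exactly $t$: row $1$ has $t-1$ copies of $x_{1,1}$ plus $x_{1,\alpha}$, and the other rows have degree $t$ by construction. Each column has degree at most $t$. Indeed, column $1$ receives $t-1$ from $x_{1,1}^{t-1}$, plus $1$ if $\alpha=1$, and nothing from rows $\ge2$. A column $j\ge2$ receives at most $t-1$ from rows $\ge2$, plus $1$ if $\alpha=j$. By König's edge-colouring theorem (as used via \cite[Theorem B]{K} in Lemma \ref{F^t}), a bipartite multigraph of maximum degree $t$ is $t$-edge-colourable. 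Since every row has degree exactly $t$, each colour class meets every row exactly once and uses distinct columns. Hence each colour class is a generator $x_{1,\ell_1}\cdots x_{m,\ell_m}$ of $\mathcal{F}$, and $x_{1,\alpha}f$ is a product of $t$ such generators.

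The main obstacle is this decomposition step. It needs either a careful citation of König's theorem for non-square boards with column degrees $\le t$, or, failing that, an explicit cyclic choice of the $s_{i,j}$ together with an explicit factorization. I would try the citation first. If it is not directly applicable, I would pad the board to an $n\times n$ $t$-regular bipartite multigraph by adding dummy rows, apply \cite[Theorem B]{K} (Birkhoff–König) there, and then restrict each resulting matching to the first $m$ rows.
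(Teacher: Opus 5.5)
Your proof is correct, and its skeleton matches the paper's: the same prime $\mathfrak p=(x_{1,1},\ldots,x_{1,n})$, the lower bound from Lemma \ref{lowbound}, and Lemma \ref{colon} applied to a monomial $x_{1,1}^{t-1}g$ with $g$ supported on rows $2,\ldots,m$. The difference is in how $x_{1,\alpha}f\in\mathcal{F}^t$ is established.

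The paper fixes the specific monomial $f=\bigl(\prod_{i=1}^m x_{i,i}^{t-1}\bigr)\prod_{j=2}^m x_{j,m+1}$. It then writes each $x_{1,\ell}f$ explicitly as a product of $t$ generators, treating $\ell\in[2,m]$ and $\ell\in\{1\}\cup[m+1,n]$ separately. This $f$ is in fact a member of your family. Rows $2,\ldots,m$ have sum $t$, and columns $2,\ldots,m$ have sum $t-1$. Column $m+1$ has sum $m-1\le t-1$, and this is precisely where the paper uses $t\ge m$.

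You instead prescribe only row and column sums and let K\"onig's edge-colouring theorem produce the factorization. Your padding alternative also works: the column deficits total $(n-m)t$, so they can always be absorbed by $n-m$ dummy rows of degree $t$.

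The paper's route is elementary and checkable by hand. Yours is conceptually shorter and more general. Your existence condition $(n-m)t\ge n-1$ shows that the same argument gives $\mathrm{v}(\mathcal{F}(\Delta_{m,n})^t)=mt-1$ for all $t\ge\lceil (n-1)/(n-m)\rceil$. At $t=2$ this condition is exactly $n\ge 2m-1$, so you recover Theorem \ref{2m-1}. For $m+2\le n\le 2m-2$ it improves the threshold $t\ge m$ in the statement. Neither argument actually uses the hypothesis $n\le 2m-2$.
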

\begin{proof}
	Let $\mathcal{F}=\mathcal{F}(\Delta_{m,n})$. Then, by Lemma \ref{lowbound},   $\mathrm{v}( \mathcal{F}^t)\ge mt-1$. We will prove $\mathrm{v}( \mathcal{F}^t)\le mt-1$.
	
	Choose $f =(\prod\limits_{i=1}^{m} x_{i,i}^{t-1})(\prod\limits_{j=2}^{m} x_{j,m+1})$. Then,  for any $\ell\in [n]$,
	\[
	x_{1,\ell}f=
	\begin{cases}
		h(x_{1,\ell}x_{\ell,m+1}\prod\limits_{i\in[2,m]\setminus\{\ell\}} x_{i,i})(\prod\limits_{j=1}^{m} x_{j,j}), &\text{if $\ell\in [2,m]$,}\\[1ex]
	 h(x_{1,\ell}\prod\limits_{i=2}^{m} x_{i,i}),, &\text{if $\ell\in\{1\}\cup[m+1,n]$,}
	\end{cases}
	\] 
	where $h=(\prod\limits_{j=1}^{m} x_{j,j}^{t-m})\bigl(\prod\limits_{k\in[2,m]\setminus\{\ell\}}(x_{1,1}x_{k,m+1}\prod\limits_{s\in[2,m]\setminus\{k\}} x_{s,s})\bigr)$. 
	Thus, $x_{1,\ell}f\in \mathcal{F}^t$,   implying $x_{1,\ell}\in (\mathcal{F}^t:f)$. That is, $\mathfrak{p}\subseteq(\mathcal{F}^t:f)$, where $\mathfrak{p} = (x_{1,1}, x_{1,2}, \ldots, x_{1,n})$.   From the proof of Theorem \ref{v_facet_ideals}, we know that $\mathfrak{p} \in \Ass(\mathcal{F})$. 
Since $\supp(\mathfrak{p})\cap\supp(f)=\{x_{1,1}\}$,  $\deg_\mathfrak{p}(f)=\deg_{x_{1,1}}(f)=t-1$. 
By Lemma \ref{colon},  $( \mathcal{F}^t : f) = \mathfrak{p}$ and  it follows that   $\mathrm{v}( \mathcal{F}^t)\le \deg(f)=mt-1$. 
\end{proof}

\medskip

\medskip

\hspace{-6mm} {\bf Acknowledgment: }
This research is supported by the Natural Science Foundation of Jiangsu Province (No. BK20221353) and the National Natural Science Foundation of China (No.12471246).  The second author carried out part of this work while visiting the International Center for Research in Mathematics and Postgraduate Training (ICRTM), affiliated with the Institute of Mathematics at the Vietnam Academy of Science and Technology, through the ICRTM--TWAS Visiting Program. She expresses her sincere gratitude for the financial support and warm hospitality received during this period.

\medskip
\hspace{-6mm} {\bf Data availability statement}

\vspace{3mm}
\hspace{-6mm}  The data used to support the findings of this study are included within the article.

\medskip
\hspace{-6mm} {\bf Conflict of interest}

\vspace{3mm}
\hspace{-6mm}  The authors declare that they have no competing interests.

\end{document}